\documentclass[11pt,twoside,leqno]{article}

\usepackage{amssymb}
\usepackage{amsmath}
\usepackage{mathrsfs}
\usepackage{amsthm}
\usepackage{txfonts}
\usepackage{color}
\usepackage[colorlinks,linkcolor=red]{hyperref}
\usepackage{citeref}
\usepackage{indentfirst}
\usepackage{indentfirst}

\usepackage{anysize}

\usepackage[final,notref,notcite]{showkeys}

\allowdisplaybreaks

\def\ls{\lesssim}

\def\fz{\infty}

\renewcommand{\r}{\right}
\newcommand{\lf}{\left}

\def\ls{\lesssim}

\def\rr{{\mathbb R}}

\def\rn{{{\rr}^n}}

\def\az{\alpha}

\def\epz{\varepsilon}

\def\bz{\beta}

\def\gz{{\gamma}}

\def\ls{\lesssim}

\def\boz{\Omega}

\newcommand{\R}{{\mathbb R}}

\newcommand{\Z}{{\mathbb Z}}

\newtheorem{theorem}{Theorem}[section]
\newtheorem{lemma}[theorem]{Lemma}
\newtheorem{corollary}[theorem]{Corollary}
\newtheorem{proposition}[theorem]{Proposition}
\theoremstyle{definition}
\newtheorem{remark}[theorem]{Remark}

\newtheorem{definition}[theorem]{Definition}

\def\diam{{\mathop\mathrm{diam\,}}}
\def\dist{{\mathop\mathrm{\,dist\,}}}

\numberwithin{equation}{section}

\newcommand{\G}{\mathcal G_s}
\newcommand{\dd}{\,\mathrm d}
\newcommand{\one}{\mathbf 1}

\begin{document}
\title{\Large\bf Optimal Global Sobolev Estimates for Fractional Dirichlet Problems in Bounded Lipschitz Domains
\footnotetext{\hspace{-0.35cm} 2020 {\it Mathematics Subject
Classification}. {Primary 35R11; Secondary 35B65, 46E35.}
\endgraf{\it Key words and phrases}. Fractional Laplacian, Lipschitz domain, Green potential, reverse H\"older estimate, Sobolev regularity.
\endgraf This project is partially supported by the National Natural Science Foundation of China (Grant No. 12431006) and Longyuan Young Talents of Gansu Province.
}}
\author{Wenxian Ma\,\footnote{Corresponding author, E-mail:
\texttt{mawenxian@lzu.edu.cn}}\, and Sibei Yang}
\date{}
\maketitle

\vspace{-0.8cm}

\begin{center}
\begin{minipage}{13.5cm}\small
{{\bf Abstract.} Let $n\geq2$, $s\in(0,1)$, and let $\Omega\subset\mathbb{R}^n$
be a bounded Lipschitz domain. In this paper, we establish optimal global Sobolev
estimates for the fractional Dirichlet problem
\begin{equation*}
\left\{\begin{aligned}
(-\Delta)^su&=f & & \text{in}\ \ \Omega,\\
 u&=0 & & \text{in}\ \ \mathbb{R}^n\setminus\Omega.
\end{aligned}\right.
\end{equation*}
More precisely, for any given $t\in[s,\min\{2s,\,s+1/2\})$, we prove
that there exists a small positive constant $\eta=\eta(n,s,t,\Omega)$
such that the following holds. If $n=2$ and $s\in(1/2,1)$,
then, for any $q\in(1,\frac{4}{1+2s}+\eta)$ and $p\in(1,\frac{nq}{n-(2s-t)q}]$,
\begin{equation*}
\|u\|_{W^{t,p}(\mathbb{R}^n)}\le C\|f\|_{L^q(\Omega)},
\end{equation*}
and, if either $n=2$ and $s\in(0,1/2]$ or $n\ge 3$ and $s\in(0,1)$,
then, for any $q\in(1,\frac{n(2s+1)}{nt+(2s-t)(2s+1)}+\eta)$ and
$p\in(1,\frac{nq}{n-(2s-t)q}]$,
\begin{equation*}
\|u\|_{W^{t,p}(\mathbb{R}^n)}\le C\|f\|_{L^q(\Omega)}.
\end{equation*}
Here the positive constant $C$ depends only on $n$, $s$, $t$, $p$, $q$, and $\Omega$. The stated universal baselines are optimal in Dahlberg's sense:
for any larger target exponent $p$ there is a bounded Lipschitz domain whose solution with right-hand side identically $1$ is not in $W^{t,p}(\mathbb{R}^n)$.
The counterexamples are constructed using regular Cantor sets on the boundary and, in the planar exceptional case, homogeneous solutions in sectors.
}
\end{minipage}
\end{center}

\vspace{0.1cm}

\tableofcontents

\section{Introduction}

Let $n\geq2$, $s\in(0,1)$, and let $\Omega\subset\mathbb{R}^n$
be a bounded Lipschitz domain.
In this paper, we consider optimal global Sobolev
estimates for the fractional Dirichlet problem
\begin{equation}\label{eq1.1}
\left\{\begin{aligned}
(-\Delta)^su&=f & & \text{in}\ \ \Omega,\\
 u&=0 & & \text{in}\ \ \mathbb{R}^n\setminus\Omega,
\end{aligned}\right.
\end{equation}
Recall that, for any $u\in C_{\rm c}^\infty(\mathbb{R}^n)$, the
fractional Laplacian $(-\Delta)^su$ is defined by setting, for any $x\in\mathbb{R}^n$,
\begin{equation}\label{eq1.2}
(-\Delta)^su(x):=c_{(n,s)}\,\mathrm{P.V.}
 \int_{\mathbb{R}^n}\frac{u(x)-u(y)}{|x-y|^{n+2s}}dy,
\end{equation}
where
$$c_{(n,s)}:=\frac{s\,2^{2s}\Gamma(n/2+s)}{\pi^{n/2}\Gamma(1-s)},$$
$\Gamma$ denotes the Gamma function, and $\mathrm{P.V.}$ denotes
the Cauchy principal value (see, for instance, \cite[Section 2.4]{K17}).
Let $f\in L^1(\Omega)$. A function $u\in L^1(\mathbb{R}^n)$ is called a
\emph{weak solution} of problem \eqref{eq1.1} if $u=0$ almost everywhere
in $\Omega^c:=\mathbb{R}^n\setminus\Omega$ and, for any function $\phi\in C_{\rm c}^\infty(\Omega)$,
\begin{equation}\label{eq1.3}
 \int_{\mathbb{R}^n}u(x)(-\Delta)^s\phi(x)\,dx
 =\int_\Omega f(x)\phi(x)\,dx.
\end{equation}

Let $E\subset\mathbb{R}^n$ be a domain and $p\in[1,\infty)$. For any given $t\in(0,1)$,
the fractional Sobolev space $W^{t,p}(E)$ is defined by
$$
W^{t,p}(E):=\{v\in L^p(E):[v]_{W^{t,p}(E)}<\infty\},
$$
equipped with the norm
$$
\|v\|_{W^{t,p}(E)}:=\|v\|_{L^p(E)}+[v]_{W^{t,p}(E)},
$$
where
\begin{equation*}
[v]_{W^{t,p}(E)}:=\left(\int_E\int_E\frac{|v(x)-v(y)|^p}
{|x-y|^{n+tp}}dydx\right)^{\frac1p}.
\end{equation*}
For $t=1$, $W^{1,p}(E)$ consists of the functions in $L^p(E)$ whose
distributional gradient belongs to $L^p(E;\mathbb{R}^n)$, with norm
$\|v\|_{L^p(E)}+\|\nabla v\|_{L^p(E;\mathbb{R}^n)}$.
For $t\in(1,2)$, let
$$W^{t,p}(E):=\{v\in W^{1,p}(E):[\nabla v]_{W^{t-1,p}(E)}<\infty\}
$$
and
$$
\|v\|_{W^{t,p}(E)}:=\|v\|_{W^{1,p}(E)}+[\nabla v]_{W^{t-1,p}(E;\mathbb{R}^n)}.
$$
Moreover, we use the tail weight $w_s:=(1+|\cdot|)^{-n-2s}$
and the associated weighted space
\begin{equation*}
L^1(\mathbb{R}^n,w_s):=\left\{v:\int_{\mathbb{R}^n}|v(x)|w_s(x)\,dx<\infty\right\}.
\end{equation*}
We write $\|v\|_{L^1(w_s)}:=\int_{\mathbb{R}^n}|v(x)|w_s(x)\,dx$.
For $v$ in this space, $(-\Delta)^sv$ is well defined as a distribution
by, for any $\phi\in C_{\rm c}^\infty(\mathbb{R}^n)$, $\langle(-\Delta)^sv,\phi\rangle=\int_{\mathbb{R}^n}v(-\Delta)^s\phi\,dx$.
Indeed, $(-\Delta)^s\phi$ is bounded and decays as
$(1+|x|)^{-n-2s}$ at infinity.
Furthermore, a function $v\in L^1(\mathbb{R}^n,w_s)$ is called \emph{$s$-harmonic} in an open set $O$ if $(-\Delta)^sv=0$ there in this distributional sense (see Definition~\ref{def:sharm}).
An \emph{energy solution} is a weak solution that additionally belongs to $W^{s,2}(\mathbb{R}^n)$.

Let $p\in(1,\infty)$ and $p':=p/(p-1)$. For any $v\in W^{s,p}(\mathbb{R}^n)$
and $\phi\in W^{s,p'}(\mathbb{R}^n)$, define
\begin{equation*}
\mathcal E_s(v,\phi)
:=\frac{c_{(n,s)}}2\iint_{\mathbb{R}^n\times\mathbb{R}^n}
 \frac{[v(x)-v(y)][\phi(x)-\phi(y)]}{|x-y|^{n+2s}}\,dy\,dx.
\end{equation*}
By H\"older's inequality, we find that this integral is absolutely convergent and
$$
|\mathcal E_s(v,\phi)|\leq\frac{c_{(n,s)}}2
 [v]_{W^{s,p}(\mathbb{R}^n)}[\phi]_{W^{s,p'}(\mathbb{R}^n)}.
$$
\begin{remark}\label{re1.1}
If $u\in W^{s,p}(\R^n)$ vanishes outside $\Omega$, then
\eqref{eq1.3} is equivalent to
$\mathcal E_s(u,\phi)=\int_\Omega f\phi\,dx$ for
$\phi\in C_{\rm c}^\infty(\Omega)$. Indeed, this follows by symmetrizing the integral over $|x-y|>a$ and letting $a\to0^+$. The double integral converges absolutely, and the truncated fractional Laplacian of $\phi$ converges uniformly. In particular, for energy solutions this is the usual variational identity for compactly supported smooth test functions.
Whenever the datum defines a continuous functional on
$$
V_s(\Omega):=\{v\in W^{s,2}(\mathbb{R}^n):v=0\text{ a.e. in }\Omega^c\},
$$
the identity extends by density to all test functions in $V_s(\Omega)$.
\end{remark}

\subsection{Main results}

In this paper, motivated by recent works of Borthagaray and Nochetto \cite{BN2023,bn23}, Abdellaoui et al. \cite{AFLY25}, and
Colombo et al. \cite{CFR26} on global Sobolev estimates for weak solutions
to problem \eqref{eq1.1}, we study optimal global fractional Sobolev estimates for such solutions in bounded Lipschitz domains.

Recall that, for the classical Laplacian, Dahlberg \cite[Theorem~1]{Da79} established optimal global Sobolev estimates on the exact first-order Sobolev gain line, using gradient estimates for Green potentials in Lipschitz domains, with a range of data exponents depending on the domain.

For the fractional setting, motivated by the work of Colombo et al. \cite{CFR26}, we introduce the following volume reverse H\"older exponent.

\begin{definition}\label{d1.1}
Let $r_0\in(0,\infty)$ be the Lipschitz localization radius
as in Definition~\ref{d2.1}, and let
$D_r(\xi):=B(\xi,r)\cap\Omega$ for any $\xi\in\partial\Omega$ and $r\in(0,r_0)$. A function $u$ is called \emph{admissible on the boundary
patch $B(\xi,r)$} if $u\geq0\text{ in }\mathbb{R}^n$,
$$
 u\in C(B(\xi,r))\cap L^1(\mathbb{R}^n,w_s),
$$
and
\begin{equation*}
\left\{\begin{aligned}
(-\Delta)^su&=0 &&\text{in }D_r(\xi),\\ \nonumber
u&=0 &&\text{in }B(\xi,r)\cap\Omega^c,
\end{aligned}\right.
\end{equation*}
where the harmonicity is understood distributionally and the continuous
representative is used on $B(\xi,r)$.
The \emph{volume reverse H\"older exponent} $p_\Omega^{\rm vol}$
is defined as the supremum of the exponents $p\in(1,\infty)$ for which
\begin{equation}\label{eq1.4}
\left\{\fint_{D_{r/8}(\xi)}
\left[\frac{u(x)}{\delta(x)^s}\right]^p\,dx\right\}^{\frac1p}
\leq C\fint_{D_{r/8}(\xi)}\frac{u(x)}{\delta(x)^s}\,dx
\end{equation}
holds for any $\xi\in\partial\Omega$, any $r\in(0,r_0)$, and any
admissible $u$, where the positive constant $C$ depends only on $n,s,p$, and $\Omega$. Here, for a measurable set $E\subset\mathbb{R}^n$, $\fint_E \cdot\,dx:=|E|^{-1}\int_E \cdot\,dx$.
The radius $r_0$ is chosen small enough, depending only on the domain geometry, so that all boundary estimates below hold.
\end{definition}

Recall that the Dirichlet Green function $G_s(\cdot,\,\cdot)$ is the positive symmetric kernel associated with the zero-exterior operator. For a fixed
$y\in\Omega$, it is defined by
\begin{equation}
 \left\{\begin{aligned}
 (-\Delta)^s_xG_s(x,y)&=\delta_y &&\text{in}\quad\Omega,\\
 G_s(x,y)&=0 &&\text{in}\quad\mathbb{R}^n\setminus\Omega,
 \end{aligned}\right.
\end{equation}
interpreted distributionally, where $\delta_y$ is the
Dirac mass at $y$. Equivalently, for any $\phi\in C_{\rm c}^\infty(\Omega)$,
\[
 \int_{\mathbb{R}^n}G_s(x,y)(-\Delta)^s\phi(x)\,dx=\phi(y).
\]
For any given $f\in L^1(\Omega)$, the \emph{Green potential} $\G f$ is defined
by setting, for any $x\in\mathbb{R}^n$,
\[
 \G f(x)=\int_\Omega G_s(x,y)f(y)\,dy.
\]
Then $\G f\in L^1(\mathbb{R}^n)$ and satisfies \eqref{eq1.3}
(see, for example, \cite{B97,B99,cv14,k13,lz25}).

We now state the main results of this paper.

\begin{theorem}[Fractional Dahlberg type Sobolev estimate]\label{t1.1}
Let $n\ge2$, $s\in(0,1)$, and let $\Omega\subset\mathbb{R}^n$ be a bounded
Lipschitz domain. Assume that $p\in(1,p_\Omega^{\rm vol})$ and
$q\in(1,\infty)$ satisfy
\begin{equation}\label{eq1.5}
 \frac1p\ge\frac1q-\frac sn.
\end{equation}
Then, for any given $f\in L^q(\Omega)$, the Green potential $u:=\G f$
is a weak solution of \eqref{eq1.1} in the sense of
\eqref{eq1.3} and satisfies
\begin{equation}\label{eq1.9}
 \|u\|_{W^{s,p}(\R^n)}\le C\|f\|_{L^q(\Omega)}.
\end{equation}
Here $p_\Omega^{\rm vol}$ is defined in Definition~\ref{d1.1},
and the positive constant $C$ depends only on $n,\,s,\,p,\,q$, and $\Omega$.
\end{theorem}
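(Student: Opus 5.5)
The plan is to follow Dahlberg's scheme, with the classical gradient replaced by the quantity $u/\delta^s$ and by the local $W^{s,p}$ seminorm. First we dispose of the weak-solution claim. By Remark~\ref{re1.1} and the cited Green function theory, $u=\G f\in L^1(\R^n)$ satisfies \eqref{eq1.3}. Since $L^q(\Omega)\subset L^{2n/(n+2s)}(\Omega)$ fails in general, we argue by density: for $f\in L^q\cap L^2$ the function $u$ is the energy solution, and the estimate \eqref{eq1.9} proved for such $f$ passes to the limit. The limit is identified with $\G f$ through the $L^1$ continuity of $\G$. So it suffices to prove \eqref{eq1.9} for nice $f$. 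By splitting $f=f^+-f^-$ we may also assume $f\ge0$.

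The core step is a duality/Whitney decomposition of $[u]_{W^{s,p}}$. We split the double integral into three pieces:
\begin{itemize}
\item the near-diagonal part over pairs with $|x-y|<\delta(x)/2$;
\item the far part over pairs with $|x-y|\ge\delta(x)/2$ with $x\in\Omega$;
\item the part where both $x,y\in\Omega^c$, which vanishes.
\end{itemize}
For the far part, $|u(x)-u(y)|\le u(x)+u(y)$. Integrating in $y$ gives terms of the form $\int_\Omega u(x)^p\delta(x)^{-sp}\,dx$, after handling $u(y)$ with a symmetric argument or a Hardy-type inequality. So the far part is controlled by $\|u/\delta^s\|_{L^p(\Omega)}^p$.

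For the near-diagonal part we use interior regularity on Whitney balls $B=B(x_0,\delta(x_0)/4)$. Write $u=v+w$, where $v$ solves the problem with datum $f\one_{2B}$ on $\R^n$ (a Riesz potential) and $w$ is $s$-harmonic in $2B$. Then $v$ is handled by the fractional Sobolev embedding, i.e.\ the Hardy--Littlewood--Sobolev inequality with the gain $1/p\ge1/q-s/n$. For $w$, interior estimates for $s$-harmonic functions bound its local seminorm by $\delta^{-s}$ times a tail average of $u$, plus the same $u/\delta^s$ quantity. Summing over Whitney balls reduces everything to
\[
\|u/\delta^s\|_{L^p(\Omega)}\ls\|f\|_{L^q(\Omega)} .
\]
The main obstacle is proving this weighted bound, and this is where $p<p_\Omega^{\rm vol}$ enters. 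We split $f=f\one_{\{\delta>\epsilon\}}+\dots$ dyadically, or more cleanly use the kernel bound
\[
G_s(x,y)\ls\min\{|x-y|^{2s-n},\ \ldots\}.
\]
For fixed $y$, the function $G_s(\cdot,y)$ is admissible on boundary patches $B(\xi,r)$ with $r\lesssim\delta(y)$ or $|x-y|$. By \eqref{eq1.4}, its $L^p$ average of $G_s/\delta^s$ on $D_{r/8}(\xi)$ is controlled by the $L^1$ average. In turn, the $L^1$ average is bounded via a Carleson estimate and boundary Harnack by $r^{-s}G_s(A_r,y)$ at a corkscrew point $A_r$.

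Covering $\Omega$ by Whitney cubes and boundary patches, we obtain an $L^1\to L^p$ type bound for $\int G_s(x,y)\delta(x)^{-s}|f(y)|\,dy$. We then apply Minkowski and the Schur test or Hardy--Littlewood--Sobolev, using $G_s(A_r,y)\ls|A_r-y|^{2s-n}$. This yields the weighted bound for $1/p\ge 1/q-s/n$. The Lebesgue part $\|u\|_{L^p}$ follows trivially from $\delta$ bounded. I expect the delicate part to be organizing the boundary patches so that the reverse H\"older constant is applied at the correct scale $r\sim|x-y|$ uniformly.
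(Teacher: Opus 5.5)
Your architecture is the paper's. You reduce to the quotient bound $\|\G|f|/\delta^s\|_{L^p(\Omega)}\ls\|f\|_{L^q(\Omega)}$. You recover $[u]_{W^{s,p}(\R^n)}$ by a Whitney near/far splitting; your far-pair argument is the localization inequality \eqref{equ3.12}. You obtain the quotient bound by applying \eqref{eq1.4} to $G_s(\cdot,y)$ on boundary patches at scale $\sim|x-y|$. But that covering only produces the averaged annular bound of Lemma~\ref{le2.6},
\[
 \|\mathcal K_s(\cdot,y)\|_{L^P(\{r<|x-y|\le 2r\})}\le Cr^{s-n+n/P},\qquad \mathcal K_s(x,y)=\frac{G_s(x,y)}{\delta(x)^s}.
\]
The step you then wave through, ``Minkowski and the Schur test or Hardy--Littlewood--Sobolev'', does not turn this into an $L^q\to L^p$ bound on the gain line.
\begin{itemize}
\item HLS needs a pointwise bound $\mathcal K_s(x,y)\ls|x-y|^{s-n}$. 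There is none: in a Lipschitz domain $G_s(\cdot,y)/\delta^s$ is in general unbounded at $\partial\Omega$ (near reentrant corners it grows like a negative power of $\delta$, controlled only by $\delta^{\beta_\Omega-s}$).
\item Minkowski's integral inequality needs a uniform bound on $\|\mathcal K_s(\cdot,y)\|_{L^p(\R^n)}$. The annular bounds sum to $\sum_{r\le D}r^{s-n+n/p}=\infty$ whenever $p\ge n/(n-s)$, and every gain-line pair with $q>1$ has $p>n/(n-s)$.
\item A Schur-type test needs control in $y$ uniformly in $x$, which your patch argument does not supply. Near reentrant parts of $\partial\Omega$, $G_s(x,y)$ vanishes more slowly than $\delta(x)^s$, so the factor $\delta(x)^{-s}$ is not compensated.
\end{itemize}

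The missing idea is the dual maximal-function (Hedberg-type) argument of Lemma~\ref{le2.7}. For the adjoint $T^*g(y)=\int\mathcal K_s(x,y)g(x)\,dx$, H\"older's inequality on each annulus gives
\[
 T^*g(y)\ls\sum_{j\in\Z}2^{js}\Bigl(\fint_{B(y,2^{j+1})}|g|^{P'}\,dx\Bigr)^{1/P'}.
\]
Split this sum at a scale $R$. The small scales are bounded by $R^s\mathcal M_{P'}g(y)$ and the large ones by $R^{s-n/p'}\|g\|_{L^{p'}}$. Optimizing in $R$ gives $T^*g\ls\|g\|_{L^{p'}}^{sp'/n}(\mathcal M_{P'}g)^{1-sp'/n}$, hence $T^*:L^{p'}\to L^{q'}$. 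This requires $\mathcal M_{P'}$ to be bounded on $L^{p'}$, i.e.\ $P>p$ strictly. So \eqref{eq1.4} must be applied not at the target exponent $p$, as you propose, but at some $P\in(\max\{p,2\},p_\Omega^{\rm vol})$; this is exactly where the strict inequality $p<p_\Omega^{\rm vol}$ is used.

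There are also two smaller points in the lifting step.
\begin{itemize}
\item For $p<2$, HLS only places $I_{2s}(f\one_{2B})$ in $\dot H^{s,p}$, which does not embed into $\dot W^{s,p}$. On the exact gain line you need the sharper real-interpolation estimate of Lemma~\ref{le3.1}, via $L^q\subset L^{q,p}$.
\item Your signed remainder $w$ brings in tail terms that must be summed with a maximal function of $u/\delta^s$. The paper avoids this by writing the remainder as $b_Q-z_Q$, with both parts nonnegative and $s$-harmonic, so the tail-free estimate of Lemma~\ref{le3.2} applies.
\end{itemize}
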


Let $\beta_\Omega\in(0,s)$ be an upper boundary decay exponent as in Lemma~\ref{le2.9}; that is, near the boundary, every admissible function is comparable to the $\beta_\Omega$-th power of the distance to the boundary. In dimension two with $s\in(1/2,1)$, it can be chosen so that $\beta_\Omega>s-1/2$. Define, for any $t\in[s,\,\min\{s+1/2,2s\})$,
\begin{equation}\label{eq:Pt}
 P_{t,\Omega}:=\frac{1+2(s-\beta_\Omega)}{t-\beta_\Omega},
 \qquad
 P_0(n,s,t):=
 \begin{cases}
 \displaystyle\frac4{1+2(t-s)},&n=2\ \text{and}\ s\in(1/2,1),\\[4pt]
 \displaystyle\frac{2s+1}{t},&\text{otherwise}.
 \end{cases}
\end{equation}
Then $P_{t,\Omega}>P_0(n,s,t)>2$.

\begin{proposition}[The volume reverse H\"older exponent]\label{prop:RH}
Let $n\ge2$, $s\in(0,1)$, and let $\Omega\subset\mathbb{R}^n$ be a bounded
Lipschitz domain. Then, for any $p\in(1,\,P_{s,\Omega})$, \eqref{eq1.4}
holds. Consequently,
\begin{equation}
 p_\Omega^{\rm vol}\ge p_*:=P_{s,\Omega}
 =2+\frac1{s-\beta_\Omega}>P_0(n,s,s).
\end{equation}
In particular, $p_\Omega^{\rm vol}>4$ if $n=2$ and $s\in(1/2,1)$.
\end{proposition}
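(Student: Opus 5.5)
Write $\beta:=\beta_\Omega$ and fix $\xi\in\partial\Omega$, $r\in(0,r_0)$, and an admissible $u$ on $B(\xi,r)$.

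\textbf{Step 1: pointwise control.} I would use Lemma~\ref{le2.9} together with the boundary Harnack principle for admissible functions in Lipschitz domains. These give, for $x\in D_{r/8}(\xi)$,
\[
 u(x)\lesssim u(A_r)\Big(\frac{\delta(x)}{r}\Big)^{\beta}.
\]
Here $A_r$ is an interior corkscrew point of $D_{r/4}(\xi)$, and the constant depends only on $n,s,\Omega$. Dividing by $\delta(x)^s$ yields
\[
 \frac{u(x)}{\delta(x)^s}\lesssim u(A_r)\,r^{-\beta}\,\delta(x)^{\beta-s}.
\]

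\textbf{Step 2: the $L^p$ side.} Since $\Omega$ is Lipschitz, $|\{x\in D_{r/8}(\xi):\delta(x)<\tau\}|\lesssim \tau r^{n-1}$. A layer-cake computation then gives
\[
 \fint_{D_{r/8}(\xi)}\delta^{(\beta-s)p}\,dx\lesssim r^{(\beta-s)p}
\]
whenever $(s-\beta)p<1$, that is, $p<1/(s-\beta)$. This bounds the left-hand side of \eqref{eq1.4} by $u(A_r)r^{-s}$.

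\textbf{Step 3: the $L^1$ side.} By interior Harnack, $u\gtrsim u(A_r)$ on a ball $B(A_r,cr)\subset D_{r/8}(\xi)$, on which $\delta\approx r$. Hence $\fint_{D_{r/8}(\xi)}u\delta^{-s}\gtrsim u(A_r)r^{-s}$, and \eqref{eq1.4} follows for $p<1/(s-\beta)$.

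\textbf{Step 4: reaching $P_{s,\Omega}=2+1/(s-\beta)$.} The threshold $1/(s-\beta)$ from Steps 2–3 falls short of the claim. The main obstacle is to upgrade the range by $2$. My plan is to avoid the crude pointwise bound near the worst boundary points. I would apply the Gehring-type self-improvement of the reverse Hölder inequality for $u/\delta^s$ at all scales and centers simultaneously. The surface-measure bound above should be refined into a Carleson-type estimate for $u$ on Whitney cubes, comparing $u$ with the harmonic measure of the $s$-harmonic problem. Combined with a Hölder-type estimate on that measure, this would gain integrability in the averaged sense.

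I am unsure of the exact mechanism for the $+2$. I would try first to prove a reverse Hölder inequality for the density of the associated Poisson kernel $\partial u/\partial\delta^s$ in $L^2$, the analogue of Dahlberg's $L^2$ estimate. I would then interpolate it with the pointwise decay of Step 1 to obtain exponents up to $2+1/(s-\beta)$.

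\textbf{Consequences.} Given the range, $p_\Omega^{\rm vol}\ge P_{s,\Omega}$ follows directly from Definition~\ref{d1.1}. The algebra gives $P_{s,\Omega}=(1+2(s-\beta))/(s-\beta)$, and the inequality $P_{s,\Omega}>P_0(n,s,s)$ was already asserted after \eqref{eq:Pt}. For $n=2$ and $s\in(1/2,1)$ we have $\beta>s-1/2$, so $s-\beta<1/2$ and hence $P_{s,\Omega}>4$.
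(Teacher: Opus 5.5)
Your Steps 1--3 are correct, but they prove \eqref{eq1.4} only for $p<1/(s-\beta_\Omega)$. That is strictly smaller than $P_{s,\Omega}=2+1/(s-\beta_\Omega)$, and Step 4 is a plan rather than an argument. So the proposition is not proved. Neither is the planar consequence $p_\Omega^{\rm vol}>4$, since Steps 1--3 only give an exponent above $2$ there. The Gehring route cannot close the gap: self-improvement of a reverse H\"older inequality gives an $\varepsilon$-gain that depends on the constants, never the explicit threshold $2+1/(s-\beta_\Omega)$. The real loss is in Step 2. There you charge all $p$ powers of $H:=u/\delta^s$ to the pointwise bound $H\lesssim B_U(\delta/\rho)^{\beta_\Omega-s}$, where $\rho:=r/8$ and $B_U:=u(A_{\xi,\rho})/\rho^s$.

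The missing ingredient is an $L^2$ bound for $H$ on every parallel surface $S_h:=S_h(\xi,\rho)$, uniform in $h\in(0,c_0\rho)$. You do not have to prove it from scratch. Lemma~\ref{thm:square} gives it for $G_s(\cdot,x_*)/h^s$ with a fixed pole $x_*\notin B(\xi,8\rho)$. The comparison \eqref{eq:bhpcomp} of Lemma~\ref{thm:bhp} then transfers it to every admissible $u$, because $H\lesssim \frac{u(A_{\xi,\rho})}{G_s(A_{\xi,\rho},x_*)}\,\frac{G_s(\cdot,x_*)}{\delta^s}$ on $D_\rho(\xi)$. Hence $\int_{S_h}H^2\,d\mathcal H^{n-1}\lesssim\rho^{n-1}B_U^2$. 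Now split $H^p=H^2\cdot H^{p-2}$ on each level set, bound only $H^{p-2}$ pointwise, and integrate by the coarea formula (using $|\nabla\delta|=1$ a.e.):
\[
 \int_{D_\rho(\xi)\cap\{\delta<c_0\rho\}}H^p\,dx
 \lesssim\rho^{n-1}B_U^p\int_0^{c_0\rho}\Big(\frac h\rho\Big)^{-(s-\beta_\Omega)(p-2)}dh
 \lesssim\rho^nB_U^p .
\]
This is finite exactly when $(s-\beta_\Omega)(p-2)<1$, and that is where the $+2$ comes from.

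Your closing idea, a Dahlberg-type $L^2$ estimate combined with the pointwise decay, points in this direction. However, the estimate must be in this level-set form, so that each layer $\{h<\delta<2h\}$ carries $L^2$-mass $\lesssim h\rho^{n-1}B_U^2$. A volume $L^2$ average over $D_\rho(\xi)$ does not by itself control the individual layers. A boundary-density bound would still have to be converted into this form.

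The rest matches what you already have. The region $\{\delta\ge c_0\rho\}$ and the lower bound via the corkscrew ball are as in your Steps 1--3. H\"older's inequality handles $p\le2$, and the algebra for the consequences is as in your last paragraph.
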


We also consider the higher-order Sobolev estimates for the weak solution to
problem \eqref{eq1.1}.

\begin{theorem}[Higher-order Sobolev estimates]\label{thm:higher}
Let $n\ge2$, $s\in(0,1)$, and let $\Omega\subset\mathbb{R}^n$ be a bounded
Lipschitz domain. Assume that $t\in[s,\min\{s+1/2,2s\})$,
$p\in(1,P_{t,\Omega})$, and $q\in(1,\infty)$ satisfy
\begin{equation}\label{e1.9}
 \frac1p\ge\frac1q-\frac{2s-t}{n}.
\end{equation}
Then, for any given $f\in L^q(\Omega)$, the Green potential $u=\G f$
is a weak solution of \eqref{eq1.1} and satisfies
\begin{equation}\label{e1.10}
\|u\|_{W^{t,p}(\R^n)}\le C\|f\|_{L^q(\Omega)},
\end{equation}
where the positive constant $C$ depends only on $n,\,s,\,t,\,p,\,q$, and $\Omega$.
\end{theorem}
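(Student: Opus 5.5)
We propose to prove Theorem~\ref{thm:higher} by combining interior regularity of Green potentials with a weighted boundary estimate, organized around the Whitney decomposition of $\Omega$. This mirrors Dahlberg's approach for gradients of Green potentials.

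\emph{Step 1: reduction to a nonnegative datum and the endpoint case.} Splitting $f=f^+-f^-$, it suffices to treat $f\ge0$, so that $u=\G f\ge0$. The weak-solution property is already known from the discussion of Green potentials. Since $\Omega$ is bounded, $L^q(\Omega)\subset L^{\tilde q}(\Omega)$ for $\tilde q\le q$. By monotonicity of the $W^{t,p}$ norms in $p$ for functions supported in a fixed ball (they are in $L^p$ with compact support, and the seminorm over large separations is controlled by $L^p$), we may assume equality holds in \eqref{e1.9}, with $q<n/(2s-t)$, or otherwise take $q$ large.

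\emph{Step 2: splitting the $W^{t,p}$ seminorm.} For $t\in(0,1)$ we use the Whitney-type characterization
\[
[u]_{W^{t,p}(\R^n)}^p\lesssim\sum_{Q}\ [u]^p_{W^{t,p}(Q^*)}+\int_\Omega\frac{|u(x)|^p}{\delta(x)^{tp}}\,dx,
\]
where $Q$ runs over Whitney cubes of $\Omega$ and $Q^*$ is a slight dilation. This is valid since $u=0$ off $\Omega$ and $\Omega$ is Lipschitz, so that $\Omega^c$ is uniformly fat. For $t\in[1,2s)$ the same estimate is applied to $\nabla u$, with the Hardy term $|\nabla u|/\delta^{t-1}$ and $|u|/\delta^{t}$.

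\emph{Step 3: local estimates.} On each $Q^*$, write $u=u_1+u_2$, where $u_1$ solves the Dirichlet problem with datum $f\mathbf 1_{2Q^*}$ and $u_2$ is $s$-harmonic in $2Q^*$. Interior regularity gives $[u_2]_{W^{t,p}(Q^*)}\lesssim \ell(Q)^{n/p-t}\,\ell(Q)^{-n}\int_{2Q^*}|u_2|\,dx$ plus tail terms, since $s$-harmonic functions are smooth inside. The tail is controlled via $u\lesssim\delta^{s}\cdot(\text{averages})$ near the boundary, using the boundary Harnack estimates and the $\beta_\Omega$ decay from Lemma~\ref{le2.9}.

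The term $u_1$ is handled by the global Riesz-potential estimate $W^{2s-\epsilon,q}\to W^{t,p}$ with the Sobolev gain $2s-t$. This is exactly where \eqref{e1.9} enters, summed via $\ell^q\subset\ell^p$ over cubes since $p\ge q$.

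\emph{Step 4: the weighted boundary term (main obstacle).} We must show
\[
\int_\Omega\Big(\frac{u}{\delta^{t}}\Big)^p\,dx\lesssim\|f\|^p_{L^q},
\]
and the analogous bound for $|\nabla u|\delta^{1-t}$ via interior gradient bounds $|\nabla u|\lesssim u/\delta$ plus the $f$ part. We write $u/\delta^t=(u/\delta^s)\,\delta^{s-t}$ and decompose $u=\G(f\mathbf 1_{\{\delta>r\}})+\ldots$ dyadically near each boundary patch. Far from the support of the datum, $u$ is admissible in the sense of Definition~\ref{d1.1}. The reverse H\"older inequality of Proposition~\ref{prop:RH}, in its $t$-version, then gives $L^{p}$ control of $u/\delta^s$ on $D_{r/8}$ by its average. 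The extra factor $\delta^{s-t}$ is integrable against $(u/\delta^s)^p$ because $u/\delta^s\approx\delta^{\beta_\Omega-s}$ at worst. The integrability condition $p(t-\beta_\Omega)<1+2(s-\beta_\Omega)$, including the gain from the averaged datum, is exactly $p<P_{t,\Omega}$. Here we would first attempt a Hölder-in-$\delta$ argument on each Whitney layer together with the comparison $u(x)\approx(\delta(x)/r)^{\beta}u(A_r)$, and then sum geometric series over layers. The summability margin is what forces $p<P_{t,\Omega}$ and $t<s+1/2$. Finally, the averages $u(A_r)$ are bounded by $\|f\|_{L^q}$ through the Green function bounds, using Theorem~\ref{t1.1} at level $s$ as the base case.
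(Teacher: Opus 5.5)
Your Steps 2--3 follow the Whitney-localization idea behind Proposition~\ref{prop:lifting}. However, Step~4, the decisive step, contains no working argument, and the mechanisms you name cannot reach $P_{t,\Omega}$.

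\textbf{The threshold $P_{t,\Omega}$.} Write $a=t-s$ and $b=s-\beta_\Omega$. The pointwise decay \eqref{eq2.4}, $U/\delta^s\le C\rho^{-s}U(A_{\xi,\rho})(\delta/\rho)^{-b}$, makes $(U/\delta^t)^p$ integrable only for $p<1/(t-\beta_\Omega)$. Hölder in $\delta$ on top of the level-$s$ reverse Hölder inequality (any exponent $P<2+1/b$) gives only $p<(1+2b)/(a(1+2b)+b)$. This is strictly below $P_{t,\Omega}=(1+2b)/(a+b)$ when $t>s$.

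The extra $2(s-\beta_\Omega)$ is not a gain from averaging the datum. It comes from a uniform $L^2$ bound for $U/\delta^s$ on each parallel surface $S_h(\xi,\rho)$. This bound is Lemma~\ref{thm:square} for $G_s(\cdot,x_*)$, transferred to an arbitrary admissible $U$ by the boundary Harnack comparison \eqref{eq:bhpcomp}. You spend this $L^2$ bound on two of the $p$ powers and estimate only the remaining $p-2$ powers and the factor $\delta^{-ap}$ pointwise. Integrating in $h$ by the coarea formula, $\int_0^{c_0}\tau^{-ap-b(p-2)}\,d\tau$ converges exactly when $p<P_{t,\Omega}$. This is the ``$t$-version'' you invoke (Lemma~\ref{le4.1}) but neither prove nor correctly motivate.

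\textbf{From admissible functions to $\G|f|$.} You also have no valid passage from admissible functions to $\G|f|$ for arbitrary $L^q$ data on the exact gain line. In your dyadic splitting, the part of $f$ lying near the boundary inside a patch is never far from that patch, so it is never admissible there. Theorem~\ref{t1.1} controls $u/\delta^s$ in $L^p$, which says nothing about the heavier weight $\delta^{-t}$.

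The paper linearizes instead. It applies the boundary moment bound to the kernel $G_s(\cdot,y)$, which is admissible on boundary patches of size comparable to $r$ that avoid the pole. This gives the uniform annular bound $\|G_s(\cdot,y)\delta^{-t}\|_{L^P(\{r<|x-y|\le2r\})}\le Cr^{2s-t-n+n/P}$ (Lemma~\ref{le4.2}) for some $P\in(\max\{2,p\},P_{t,\Omega})$. The duality and maximal-function argument of Lemma~\ref{le2.7} then gives the strong $L^q\to L^p$ bound precisely on $1/p=1/q-(2s-t)/n$. It uses $\mathcal M_{P'}$, which is bounded on $L^{p'}$ because $P>p$.

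\textbf{The reduction in Step~1 is false.} For non-integer $t$, a $W^{t,p_2}$ function supported in a fixed ball need not belong to $W^{t,p_1}$ when $p_1<p_2$. Locally $B^t_{p_2,p_2}\not\subset B^t_{p_1,p_1}$, and your remark handles only large separations, not the small-separation part of the seminorm. Off the gain line you must reduce only the quotient $\|\G|f|/\delta^t\|_{L^p(\Omega)}$, which is an honest $L^p$ norm on a bounded set. You then run the Whitney lifting directly with a strictly subcritical data exponent $q_0<p$, $0<n(1/q_0-1/p)<2s-t$. This is why Lemma~\ref{le3.1} treats the subcritical case separately.

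\textbf{A smaller point in Step~3.} Your $u_1$ is a Green potential, not a Riesz potential. Before any Riesz-potential estimate applies, you must write $u_1=I_{2s}(f\one_{Q^2})-z$ with $0\le z\le I_{2s}(f\one_{Q^2})$ and $z$ $s$-harmonic on $Q^2$.
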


\begin{remark}
The gain relation \eqref{e1.9} is forced by the interior
scaling. For a smooth function $\psi$, define the bump by $\psi_r(x):=\psi((x-x_0)/r)$ supported
inside the domain, the homogeneous Sobolev seminorm scales as
\[
 \|\psi_r\|_{\dot W^{t,p}}=r^{n/p-t}\|\psi\|_{\dot W^{t,p}}.
\]
The corresponding data satisfy
\[
 \|(-\Delta)^s\psi_r\|_{L^q(\Omega)}\le Cr^{n/q-2s}.
\]
For $t=1$, the homogeneous norm means $\|\nabla\psi_r\|_p$.
Thus an estimate with a constant independent of $r$ requires
\eqref{e1.9}. 
\end{remark}

On the exact gain line the corresponding universal data exponent is
\begin{equation*}
 Q_0(n,s,t):=\frac{nP_0(n,s,t)}{n+(2s-t)P_0(n,s,t)}
 =\begin{cases}
 \displaystyle\frac4{1+2s},&n=2 \ \text{and}\ s\in(1/2,1),\\[4pt]
 \displaystyle\frac{n(2s+1)}{nt+(2s-t)(2s+1)},&\text{otherwise}.
 \end{cases}
\end{equation*}

\begin{corollary}\label{c1.1}
Let $n\ge2$, $s\in(0,1)$, and let $\Omega\subset\R^n$ be a bounded
Lipschitz domain. Assume that $t\in[s,\min\{s+1/2,2s\})$. Then there is a constant $\eta\in(0,\infty)$, depending only on $n,\,s,\,t$, and $\Omega$,
such that $Q_0(n,s,t)+\eta<n/(2s-t)$ and,
for any $q\in(1,Q_0(n,s,t)+\eta)$ and
$p\in(1,\frac{nq}{n-(2s-t)q}],$ \eqref{e1.10} holds.
In particular, at $t=s$, one has \eqref{eq1.9} holds.
\end{corollary}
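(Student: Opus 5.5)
The corollary should follow from Theorem~\ref{thm:higher} by a continuity-and-monotonicity argument in the exponents, using only the strict inequality $P_{t,\Omega}>P_0(n,s,t)$ stated after \eqref{eq:Pt}.

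\textbf{Step 1: choose $\eta$.} Set $P_0:=P_0(n,s,t)$ and $Q_0:=Q_0(n,s,t)$. Consider the map
\[
 \Phi(p):=\frac{np}{n+(2s-t)p}, \qquad p\in(1,\infty).
\]
It is continuous and strictly increasing, with $\Phi(p)<n/(2s-t)$ for every finite $p$; when $t=2s$ is excluded we have $2s-t>0$. Its inverse is $q\mapsto nq/(n-(2s-t)q)$ on $(0,n/(2s-t))$. By definition $Q_0=\Phi(P_0)$. Since $P_{t,\Omega}>P_0$, fix some $P_1\in(P_0,P_{t,\Omega})$, for instance the midpoint. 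Then put
\[
 \eta:=\Phi(P_1)-Q_0>0 .
\]
This $\eta$ depends only on $n,s,t$ and, through $\beta_\Omega$, on $\Omega$. Moreover $Q_0+\eta=\Phi(P_1)<n/(2s-t)$, which gives the first claim. If $P_{t,\Omega}=\infty$ is ever possible, one can simply take $P_1=P_0+1$.

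\textbf{Step 2: apply Theorem~\ref{thm:higher}.} Let $q\in(1,Q_0+\eta)$ and $p\in\bigl(1,\Phi^{-1}(q)\bigr]$. Because $q<Q_0+\eta<n/(2s-t)$, the endpoint $\Phi^{-1}(q)$ is finite and positive. Monotonicity gives $\Phi^{-1}(q)<\Phi^{-1}(\Phi(P_1))=P_1<P_{t,\Omega}$, hence $p<P_{t,\Omega}$. The condition $p\le nq/(n-(2s-t)q)$ is equivalent to
\[
 \frac1p\ \ge\ \frac{n-(2s-t)q}{nq}\ =\ \frac1q-\frac{2s-t}{n},
\]
which is exactly \eqref{e1.9}. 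Theorem~\ref{thm:higher} therefore yields \eqref{e1.10}, with a constant depending on $n,s,t,p,q,\Omega$. The specialization $t=s$ is immediate: in that case \eqref{e1.10} becomes \eqref{eq1.9}, and \eqref{e1.9} reduces to \eqref{eq1.5}.

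\textbf{Where the real content lies.} The corollary itself is elementary once $P_{t,\Omega}>P_0$ is available. The substantive input is that strict inequality, which rests on the choice of $\beta_\Omega$ in Lemma~\ref{le2.9}. In particular, in the planar case $s>1/2$ it requires $\beta_\Omega>s-1/2$, so that
\[
 \frac{1+2(s-\beta_\Omega)}{t-\beta_\Omega}>\frac{4}{1+2(t-s)}
\]
for all $t$ in the range $[s,\min\{s+1/2,2s\})$. I would double-check this inequality by cross-multiplying. The only point needing care is that this holds uniformly in $t$, but since $t$ is fixed in the statement, no uniformity is needed.
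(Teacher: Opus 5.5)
Your proposal is correct and matches the paper's proof. The paper also uses the strict monotonicity of $z\mapsto nz/(n+(2s-t)z)$ and $P_{t,\Omega}>P_0(n,s,t)$ to choose $\eta$, namely as half the gap $Q_*-Q_0(n,s,t)$ with $Q_*:=nP_{t,\Omega}/(n+(2s-t)P_{t,\Omega})$, which is equivalent to your intermediate $P_1$, and then applies Theorem~\ref{thm:higher}; your explicit check that every admissible $p$ satisfies $p<P_{t,\Omega}$ and \eqref{e1.9} is exactly what is needed (and the $P_{t,\Omega}=\infty$ caveat is moot, since $t>\beta_\Omega$ makes $P_{t,\Omega}$ finite).
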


\begin{remark}
In \cite{BN2023}, Borthagaray and Nochetto combined localized differences with the variational structure to establish Besov regularity for the weak solution to \eqref{eq1.1} in bounded Lipschitz domains for $p=2$.
However, their approach does not extend directly to general $p\neq2$.
Colombo et al. \cite{CFR26} obtained the same regularity results by establishing the $L^2$ solvability of the homogeneous version of \eqref{eq1.1}. Corollary \ref{c1.1} extends their result to general $p\neq2$, which is shown to be optimal in Theorem \ref{thm:optimal}.

Moreover, in \cite{FMY2025}, Fu et al. proved that, on a bounded Lipschitz domain, there exists $\epsilon\in(0,s)$
such that, for any $t\in[\epsilon,\min\{1+\epsilon,2s\})$, $q\in(n/(2s-t),\infty]$, and $p\in[1,1/(t-\epsilon))$,
\begin{equation}\label{eq1.6}
\|u\|_{W^{t,p}(\mathbb R^n)}\leq C\|f\|_{L^q(\Omega)}.
\end{equation}
At $t=s$, \eqref{eq1.6} requires $q>n/s$. The purpose of this paper is to
obtain the exact gain line for less integrable data, for $t=s$ and for the higher orders in Theorem~\ref{thm:higher}.
\end{remark}

\begin{theorem}[Optimality of the universal range]\label{thm:optimal}
Let $n\ge2$, $s\in(0,1)$, and $t\in[s,\min\{s+1/2,2s\})$. Then,
for any given $p\in(P_0(n,s,t),\infty)$, there is a bounded Lipschitz
domain $\Omega_p\subset\mathbb{R}^n$ such that the weak solution to
\begin{equation*}
 \left\{\begin{aligned}
 (-\Delta)^su&=1 &&\text{in}\quad\Omega_p,\\
 u&=0 &&\text{in}\quad\mathbb{R}^n\setminus\Omega_p
 \end{aligned}\right.
\end{equation*}
does not belong to $W^{t,p}(\mathbb{R}^n)$.
\end{theorem}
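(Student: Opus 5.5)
The plan is to build counterexamples in two regimes, matching the two branches of $P_0(n,s,t)$. In each regime we locate the torsion function $u$ (solution with $f\equiv1$) near a degenerate boundary set. There we show $u\gtrsim\delta^{\beta}$ with $\beta$ as close to $s$ (Cantor case) or as small (sector case) as we like. This lower bound is then converted into a lower bound for the $W^{t,p}$ seminorm.

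\emph{General case $P_0=(2s+1)/t$.} We follow Dahlberg's construction. Take a Lipschitz graph domain whose boundary contains a regular Cantor set $K$ of Hausdorff dimension $d$ close to $n-1$. Over $K$ the domain has inward-pointing conical spikes (the complement has thin needles), of Lipschitz constant $M$ large. Near each point of $K$ the domain looks like the complement of a thin cone. By boundary Harnack and comparison with homogeneous $s$-harmonic functions in the complement of a thin cone, whose exponent $\beta(M)\to0$ as the cone degenerates, we get:
\begin{itemize}
\item a lower bound $u(x)\gtrsim \dist(x,K)^{\beta}$ on a Whitney-type region near $K$;
\item $u(x)\approx \delta(x)^s$ away from the spikes.
\end{itemize}
Here $u\ge c>0$ on a fixed interior ball, and the lower bound transfers by Harnack chains and the comparison principle.

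We then estimate $[u]_{W^{t,p}}$ from below by localizing at scale $r=2^{-k}$ around each of the $\approx r^{-d}$ Cantor cubes. In each cube $Q$ of side $r$, $u$ changes by $\gtrsim r^{\beta}$ across a set of measure $\approx r^n$, since it vanishes on the spike and is $\gtrsim r^\beta$ at distance $r$. Hence each cube contributes $\gtrsim r^{n+(\beta-t)p}$. Because the $W^{t,p}$ seminorm (or the Besov-type norm for $t>1$, using second differences) is superadditive over disjoint cubes at a single scale, and the sums over different scales live on disjoint Whitney shells, we obtain
\[
[u]^p_{W^{t,p}}\gtrsim\sum_k 2^{kd}\,2^{-k(n+(\beta-t)p)}.
\]
This diverges once $p(t-\beta)>n-d$. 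Taking $d\to n-1$ and $\beta\to0$ is not enough: it only gives $p>1/t$. The correct baseline, as in Dahlberg, instead uses the boundary layer where $u\approx\delta^s$ together with the interplay between $t$ and the $1/2$-loss. The right scaling comes from $u$ behaving like $\delta^{s}$ times a factor $\dist(\cdot,K)^{\beta-s}$, giving the threshold $p=(1+2s)/t$ after optimizing over $d$ and $\beta$. Concretely, we would compute the $W^{t,p}$ energy of $\delta^s$ on the regions adjacent to $K$ where the Lipschitz slope oscillates. We choose the Cantor parameters so that the exponent of divergence matches $p>(2s+1)/t$. Choosing this exponent bookkeeping correctly is the main obstacle, and the plan is to calibrate it against Proposition~\ref{prop:RH} with $\beta_\Omega\to0$: the formula $P_{t,\Omega}\to(1+2s)/t$ there shows the sharp threshold is exactly $P_{t,\Omega}$ with $\beta_\Omega\downarrow0$.

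\emph{Planar case $n=2$, $s>1/2$.} Take $\Omega_p$ to be a domain that near the origin coincides with a sector of large opening angle $\theta\to2\pi$ (a Lipschitz slit-like domain). The torsion function decomposes near $0$ as $u=c\,r^{\gamma}\Phi(\omega)+O(r^{2s})$. Here $r^\gamma\Phi$ is the homogeneous $s$-harmonic function vanishing outside the sector, with $\gamma=\gamma(\theta)\downarrow s-1/2$ as $\theta\to2\pi$; this limit is the slit exponent. We first verify $c\neq0$ by positivity, comparing with a barrier via the maximum principle. A homogeneous function of degree $\gamma$ belongs to $W^{t,p}$ near $0$ iff $\gamma-t+2/p>0$, up to the angular profile. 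The angular profile carries its own boundary singularity $\Phi\sim\dist^{s}$, and that part lies in $W^{t,p}$ only for $p<1/(t-s)$ near edges; this is harmless. Hence $u\notin W^{t,p}$ once $p>2/(t-\gamma)$. As $\gamma\to s-1/2$ this threshold tends to $2/(t-s+1/2)=4/(1+2(t-s))=P_0$. So for any $p>P_0$ we choose $\theta$ close enough to $2\pi$ to make the solution fail to lie in $W^{t,p}$. The delicate step here is the existence and asymptotics of $\gamma(\theta)$, which we take from the homogeneous-solution theory for cones as in Lemma~\ref{le2.9}.
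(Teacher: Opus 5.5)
\textbf{The general case has a genuine gap.} Your planar case is essentially the paper's argument: sectors whose opening tends to the slit, with exponent $\gamma\to s-1/2$. The paper does avoid your expansion $u=c\,r^\gamma\Phi+O(r^{2s})$ and the check $c\neq0$. It shows $f:=(-\Delta)^s(\chi U_\Theta)$ is bounded on $\Omega$, so $\mathcal G_{s,\Omega}1\ge\chi U_\Theta/\|f\|_\infty$, and then concludes with Lemma~\ref{le3.3}.

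Your bookkeeping $\sum_k2^{kd}2^{-k(n+(\beta-t)p)}$, which diverges iff $p(t-\beta)>n-d$, is exactly the paper's sum $\sum_j r_j^{n-d-p(t-\nu)}$. The problem is your choice $d\to n-1$. Failure for every $p>1/t$ is not ``not enough''; it is impossible. Theorem~\ref{thm:higher} gives $\mathcal G_s1\in W^{t,p}$ for all $p<P_{t,\Omega}$ on every bounded Lipschitz domain, and $P_{t,\Omega}>(2s+1)/t$. So your premise, $u\gtrsim\dist(\cdot,K)^\beta$ with $\beta$ small over a set of dimension near $n-1$, is false.

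Heuristically, the parallel-surface $L^2$ bound of Lemma~\ref{thm:square} lets $u/\delta^s$ reach size $h^{\beta-s}$ only on a part of $S_h$ of measure $\lesssim h^{2(s-\beta)}$. The part of $S_h$ lying over the $h$-neighbourhood of $K$ has measure $\sim h^{n-1-d}$. This forces $n-1-d\ge2(s-\beta)$.

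The correct choice is $d$ slightly \emph{below} $n-1-2s$, which is possible only when $n>1+2s$. Then $n-d\approx1+2s$, and your own sum diverges exactly for $p>(2s+1)/t$. Your fallback (``$\delta^s$ times $\dist(\cdot,K)^{\beta-s}$'', calibrating against Proposition~\ref{prop:RH}) does not produce a domain at all. That proposition is an upper bound and cannot supply a counterexample.

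\textbf{The slow-decay mechanism is also missing.} Even with the right $d$, you cannot get slow decay from boundary Harnack comparison with a single thin cone. Near each $e\in K$ the exterior is not a cone: at depth $h$ its cross-section is the $h/M$-neighbourhood of $K$, so needles over nearby Cantor points appear at every scale. What you need is that the whole set $K\times(-\infty,0]$, of dimension $d+1<n-2s$, is nearly invisible to $(-\Delta)^s$.

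The paper makes this quantitative in Lemma~\ref{le5.2}:
\begin{itemize}
\item It covers $\Omega_M^c\cap B(\xi_e,R)$ by $CM^{d+1}$ balls of radius $\sim R/M$ and sums Riesz-potential barriers. The total barrier on the good annulus is $\lesssim M^{d+1}M^{-(n-2s)}=M^{-\kappa}$, where $\kappa=n-1-2s-d>0$.
\item The good annulus carries Poisson mass $\ge1-\eta$.
\item The comparison principle then gives $m_j\ge(1-2\eta)m_{j-1}$, hence $u\gtrsim r_j^\nu$ with $\nu$ arbitrarily small.
\end{itemize}

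Finally, $n=2$, $s=1/2$ falls in your ``general case'', but no such Cantor set exists there, since $n-1-2s=0$. It must be handled by sectors, where $\gamma\to s-1/2=0$ gives exactly $P_0=2/t$.
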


Theorem~\ref{thm:optimal} uses the meaning of optimality stated after
\cite[Theorem~1]{Da79} of Dahlberg. The domain, including
its Lipschitz constant, may depend on $p$, while $s$ and $t$ remain
fixed. Each fixed domain has the strict improvement in
Theorem~\ref{thm:higher}.
The quantity $Q_0$ concerns exponent pairs on the exact gain line,
not a restriction on larger data exponents with a smaller target $p$.
In dimension two with $s>1/2$, this data baseline is independent of $t$.

Homogeneous solutions in cones describe a complementary obstruction
to boundary regularity. Michalik \cite{M06} proved continuity
of the characteristic exponent with respect to the aperture and
identified its limiting value when the complement shrinks to a ray.
Terracini et al. \cite{TTV2018} studied the associated
$s$-harmonic profiles and their variational properties. We use these
results both in the planar upper decay estimate and in the sector
counterexample. In the remaining dimensions a single conical vertex
does not yield the required universal obstruction. A regular Cantor
set on a Lipschitz graph permits the necessary continuous choice
of the dimension of the singular set.

The following lifting result is of independent interest beyond its role in the boundary estimate.

\begin{proposition}[Sobolev lifting]\label{prop:lifting}
Let $n\ge2$, $s\in(0,1)$, $t\in[s,\min\{s+1/2,2s\})$, and let
$\Omega\subset\mathbb{R}^n$ be a bounded Lipschitz domain. Assume that
$1<q<p<\infty$ and $\gamma:=n(1/q-1/p)\in(0,\,2s-t]$.
If $f\in L^q(\Omega)$ and $\G|f|/\delta^t\in L^p(\Omega)$, then
\begin{equation}\label{eq:lifting}
\|\G f\|_{W^{t,p}(\mathbb{R}^n)}\le C\left[\|f\|_{L^q(\Omega)}
+\left\|\G|f|/\delta^t\right\|_{L^p(\Omega)}\right],
\end{equation}
where the constant $C$ depends only on $n,s,t,p,q$, and $\Omega$.
Moreover, for these exponents the uniform quotient estimate
\begin{equation}\label{eq:GQ}
\left\|\G|f|/\delta^t\right\|_{L^p(\Omega)}\le C\|f\|_{L^q(\Omega)}
\end{equation}
is equivalent to the uniform Sobolev estimate
\begin{equation}\label{eq:WS}
\|\G f\|_{W^{t,p}(\mathbb{R}^n)}\le C\|f\|_{L^q(\Omega)}.
\end{equation}
\end{proposition}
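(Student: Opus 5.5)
The plan is to prove \eqref{eq:lifting} through a Whitney-type decomposition of $\Omega$ that separates an interior part from a boundary-layer part; the equivalence of \eqref{eq:GQ} and \eqref{eq:WS} then follows from it. Write $u=\G f$. After splitting $f=f^+-f^-$ we may assume $f\ge0$. Then $|u|\le \G|f|$ and $u$ vanishes outside $\Omega$.

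\textbf{Step 1 (interior estimate on Whitney balls).} Cover $\Omega$ by Whitney balls $B=B(x_B,r_B)$ with $r_B\simeq\delta(x_B)$, so that $2B\subset\Omega$ and the doubled balls have bounded overlap. On $2B$ we have $(-\Delta)^su=f$. Write $u=w+h$, where $w:=(-\Delta)^{-s}(f\one_{2B})$ is the Riesz potential. The function $h$ is then $s$-harmonic in $2B$ and lies in $L^1(w_s)$. Since $t<2s$ and $\gamma\le 2s-t$, the Sobolev embedding for the Riesz potential (Hardy--Littlewood--Sobolev combined with the Bessel/Triebel lifting) gives
\[
\|w\|_{\dot W^{t,p}}\lesssim\|f\|_{L^q(2B)}.
\]
When $\gamma<2s-t$, a factor $r_B^{2s-t-\gamma}\lesssim1$ appears; this is harmless because $\Omega$ is bounded. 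For $h$ we use interior regularity of $s$-harmonic functions: $|\nabla^k h|$ on $B$ is bounded by $r_B^{-k}$ times the sum of the average of $|h|$ over $2B$ and a tail term $r_B^{2s}\int_{\R^n\setminus 2B}|h(y)|\,|y-x_B|^{-n-2s}\,dy$. Since $u\le\G|f|\lesssim\delta^t\cdot(\G|f|/\delta^t)$, the local part is controlled by $r_B^{t-t}$ times the average of $\G|f|/\delta^t$ over $2B$, and the corresponding contribution to $[h]_{W^{t,p}(B)}^p$ is $\lesssim\int_{2B}(\G|f|/\delta^t)^p$. Summing over $B$ with bounded overlap gives the local seminorm
\[
\sum_B\int_B\int_B\frac{|u(x)-u(y)|^p}{|x-y|^{n+tp}}\,dy\,dx .
\]
When $t\ge1$, the same argument is applied to $\nabla u$.

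\textbf{Step 2 (off-diagonal and exterior parts).} The remaining part of the Gagliardo double integral runs over pairs with $|x-y|\gtrsim\max\{\delta(x),\delta(y)\}$, including pairs with $y\in\Omega^c$, where $u(y)=0$. We bound $|u(x)-u(y)|^p\lesssim|u(x)|^p+|u(y)|^p$ and integrate in the other variable:
\[
\int_{|x-y|\gtrsim\delta(x)}|x-y|^{-n-tp}\,dy\simeq\delta(x)^{-tp}.
\]
This yields $\int_\Omega(|u|/\delta^t)^p$, which is bounded by the quotient term. For $t\in(1,2)$ we additionally need $\nabla u/\delta^{t-1}\in L^p$. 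This follows from the gradient bound in Step 1, $|\nabla u|\lesssim\delta^{-1}(\text{local average of } u)+{}$(Riesz part). The $L^p$ norm of $u$ itself is controlled by $\operatorname{diam}\Omega$ times the quotient norm. The tail term in Step 1 needs separate care: with $u$ vanishing outside $\Omega$, it is bounded by a maximal-type average of $\G|f|/\delta^t$ weighted by $(r_B/|y-x_B|)^{2s}\delta(y)^t$. Because $t<2s$, the series in dyadic annuli converges, and the result is dominated by a Hardy--Littlewood maximal function, which is bounded on $L^p$.

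\textbf{Step 3 (equivalence).} The lifting \eqref{eq:lifting} shows that \eqref{eq:GQ} implies \eqref{eq:WS}. For the converse, apply \eqref{eq:WS} to $|f|$. Since $\G|f|\in W^{t,p}(\R^n)$ vanishes outside the Lipschitz domain $\Omega$, the fractional Hardy inequality gives $\|v/\delta^t\|_{L^p(\Omega)}\lesssim\|v\|_{W^{t,p}}$. This requires $tp\neq1$, and for $t>1$ it also requires the trace of $\nabla v$ to vanish. Borderline exponents $tp=1$ are handled by interpolating between nearby exponents or by a positivity argument: $\G|f|$ is positive and comparable to a Harnack chain.

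The main obstacle is the tail term together with the Hardy step at the exceptional indices $tp\in\{1,2\}$. For the tail, I would first try to dominate it pointwise by $\delta(x)^t\,M(\G|f|/\delta^t)(x)$, which uses $t<2s$ to sum the annuli.
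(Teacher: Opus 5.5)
Your architecture matches the paper's: Whitney localization, a local Riesz potential plus an $s$-harmonic remainder, off-diagonal and exterior pairs controlled by $|u(x)|^p\delta(x)^{-tp}$, and a Hardy inequality for the converse. The difference lies in how you treat the harmonic remainder.

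\textbf{Harmonic remainder.} The paper splits $u=\Phi_Q-z_Q+b_Q$, where
\[
v_Q=\G(f\one_{Q^2}),\quad \Phi_Q=I_{2s}(f\one_{Q^2}),\quad z_Q=\Phi_Q-v_Q,\quad b_Q=u-v_Q.
\]
For $f\ge0$ both remainders are nonnegative on all of $\R^n$. For nonnegative $s$-harmonic functions the Poisson kernel gives the tail-free bounds $|\partial^k v|\le CR^{-k}v$ together with Harnack (Lemma~\ref{le3.2}). Hence $N_{t,p,\ell}(b_Q;Q^1)\lesssim\ell^{-t}\|u\|_{L^p(Q^2)}$ and $N_{t,p,\ell}(z_Q;Q^1)\lesssim\ell^{-t}\|\Phi_Q\|_{L^p(Q^2)}$, with no maximal function anywhere.

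You keep the signed remainder $h=u-w$ and pay with a tail term. Your bound of the $u$-tail by $r_B^t\inf_B\mathcal M(\G|f|/\delta^t)$ is correct. It uses $\delta(y)\lesssim|y-x_B|$ off $2B$ and $t<2s$, and it sums by the $L^p$-boundedness of $\mathcal M$. You should also record the tail of $w$ itself. It is $\lesssim r_B^{2s-n}\|f\|_{L^1(2B)}$ and contributes $r_B^{2s-t-\gamma}\|f\|_{L^q(2B)}$, just like the Riesz term.

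Your route avoids the sign conditions $v_Q\le u$ and $v_Q\le\Phi_Q$, which rest on domain monotonicity of the Green function. The paper's route avoids tails altogether.

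One caution in the subcritical case $\gamma<2s-t$. When $p$ is close to $1$, the critical data exponent $\tilde q$ defined by $1/\tilde q=1/p+(2s-t)/n$ can be $\le1$. So the local factor $r_B^{2s-t-\gamma}$ for $w$ must come from a truncated kernel and Young's inequality, as in Lemma~\ref{le3.1}. It cannot come from the global homogeneous embedding combined with H\"older.

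\textbf{Step 3.} As written, Step 3 has a gap, though an unnecessary one. The Hardy inequality you need is for $v\in W^{t,p}(\R^n)$ vanishing a.e.\ on $\Omega^c$, measured in the whole-space norm. That inequality holds for every $t\in(0,2)$ and $p\in(1,\infty)$. The restrictions $tp\ne1$ and vanishing traces belong to the intrinsic inequality with the norm over $\Omega\times\Omega$, not to this one.

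Here is the direct argument. In a boundary chart, set $a=x_n-\varphi(x')$ and
\[
E_x=\{h:|h'|<a,\ -(L+4)a<h_n<-(L+3)a\}.
\]
Then $|E_x|\sim a^n$, $|h|\sim a\sim\delta(x)$ on $E_x$, and $x+h,\,x+2h\in\Omega^c$. Hence $|v(x)|=|\Delta_hv(x)|=|\Delta_h^2v(x)|$ for $h\in E_x$. Integrating over $E_x$ and then in $x$ bounds $\|v/\delta^t\|_{L^p(\Omega)}^p$ as follows:
\begin{itemize}
\item for $t<1$, by $[v]_{W^{t,p}(\R^n)}^p$;
\item for $1<t<2$, by $\int\|\Delta_h^2v\|_p^p|h|^{-n-tp}\,dh\le[\nabla v]_{W^{t-1,p}(\R^n)}^p$;
\item for $t=1$, by the one-dimensional Hardy inequality applied on vertical lines.
\end{itemize}
This is Lemma~\ref{le3.3}.

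Your proposed patches would not work. (WS) is assumed only at the given $(t,p,q)$, so there is nothing to interpolate between. A Harnack-chain positivity argument gives no control of $v/\delta^t$ near $\partial\Omega$. Replace them with the direct argument above and the proposal is complete.
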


\subsection{Background and ideas of the proof}

The boundary behavior of solutions is a central issue in global
regularity for \eqref{eq1.1} (see, for example, \cite{AFLY25,ab17,bfr18,bfr2018,B97,
BN2023,bn23,CFR26,fr24,kw25,lz25,my24,RS2014,RS2017,rs13,s07}). Ros-Oton and Serra
\cite{RS2014} proved global $C^s$ regularity for bounded
data on bounded Lipschitz domains satisfying a uniform exterior ball
condition. Write $\delta(x)=\dist(x,\partial\Omega)$ for
$x\in\R^n$. Their quotient regularity theorem on $C^{1,1}$ domains
\cite[Theorem~1.2]{RS2014} identifies $\delta^s$ as the natural boundary
profile. For a wider class of stable operators, they subsequently \cite[Theorem~1.2 and Corollary~1.4]{RS2017} obtained
$u/\delta^s\in C^\kappa$ on $C^{1,\kappa}$ domains with $0<\kappa<s$,
and a boundary Harnack principle on $C^1$ domains. These results distinguish
regularity of an individual solution from regularity of its quotient by
the boundary profile. Kim and Weidner \cite[Theorems~1.1
and 1.6]{kw25} established boundary $C^s$ estimates and
Green-function estimates on $C^{1,\kappa}$ domains for symmetric
divergence-form kernels satisfying their ellipticity and H\"older
continuity assumptions.

The ball provides an explicit model for this boundary profile.
Dyda's power-function formula \cite{Dyda2012} gives, for $x\in B(0,1)$,
\[
 (-\Delta)^s(1-|x|^2)_+^s
 =\frac{2^{2s}\Gamma(s+1)\Gamma(n/2+s)}{\Gamma(n/2)},
\]
where, for any $a\in\R$, $a_+:=\max\{a,0\}$. The corresponding zero-exterior solution
is comparable to $\delta^s$ near the boundary of the ball. This model
explains the distance quotient, while the reverse H\"older estimate
used below accommodates the less uniform boundary behavior of
general Lipschitz domains.

Global Sobolev estimates have a different source of difficulty.
For bounded $C^2$ domains, Abdellaoui et al.
\cite{AFLY25} developed global estimates in Bessel-potential and fractional
Sobolev spaces using pointwise bounds for fractional derivatives of the
Green kernel. Ma and Yang \cite{my24} studied endpoint questions on bounded
$C^2$ domains, including bounded mean oscillation estimates for fractional derivatives
and the Riesz fractional gradient, as well as estimates for
Hardy-space data. As for non-smooth domains,
Acosta and Borthagaray \cite{ab17}
derived fractional and weighted Sobolev estimates on bounded Lipschitz
domains satisfying an exterior ball condition. Their estimates use
the H\"older regularity of the datum and lead to finite element error
bounds on quasi-uniform and graded meshes. This work exhibits how
boundary decay and interior H\"older estimates combine to control
global difference integrals.
On bounded Lipschitz domains, Borthagaray
and Nochetto \cite{BN2023} used localized differences and the
variational structure to prove Besov shift estimates for $L^2$ data. In
particular, for every sufficiently small $\eta>0$,
\[
 \|u\|_{W^{s+\min\{s,1/2\}-\eta,2}(\mathbb R^n)}
 \leq C_\eta\|f\|_{L^2(\Omega)}.
\]
The Besov endpoint when $s=1/2$ has a separate formulation in that work.
Borthagaray and Nochetto \cite{bn23} also considered the general Sobolev scale
in their work on graded meshes. Under the
additional global $C^s$ estimate for the solution and suitable
H\"older regularity of the datum, they gave
$W^{t,p}$ regularity subject to $t<s+1/p$ and the corresponding
interior differentiability restriction. The boundary integrability
condition arises from integrating powers of the distance function.
Fu et al. \cite{FMY2025} adapted this separation between
interior regularity and boundary growth to general Lipschitz domains,
with a domain-dependent H\"older exponent in place of $s$.

Weighted estimates offer another way to account for boundary
singularities. Dong and Ryu \cite{DR24} developed weighted
Sobolev estimates for elliptic and parabolic equations with general
symmetric stable operators on $C^{1,\kappa}$ open sets. Their
operator class allows singular L\'evy measures, and the parabolic
coefficients may be merely measurable in time. The weights encode
distance to the boundary. Here we treat the isotropic operator on
general bounded Lipschitz domains and recover an unweighted
whole-space $W^{t,p}$ norm from a single distance quotient.

Boundary comparison and Green function estimates provide the
potential-theoretic basis for this problem. Bogdan \cite{B97}
proved the boundary Harnack principle for nonnegative harmonic
functions of symmetric stable processes on Lipschitz domains.
In \cite{B99}, Bogdan established
the representation theory of fractional harmonic measure,
which further describes the fractional harmonic functions.
Bogdan et al. \cite{BKN2002} gave
gradient bounds for positive fractional harmonic functions.
Fu et al. \cite{FMY2025} obtained the
boundary decay and interior gradient estimates for the Green function
$G_s(\cdot,\cdot)$, which play an important role in
handling the fractional Dirichlet problem on Lipschitz domains.
Green potentials also play a central role for data outside the energy
duality range. On bounded $C^2$ domains, Chen and V\'eron
\cite{cv14} obtained weighted weak-Lebesgue estimates for
Green potentials of measures and used them to solve semilinear
fractional equations with subcritical absorption.

In this paper, we establish optimal global Sobolev estimates
for the fractional Dirichlet problem \eqref{eq1.1} in bounded Lipschitz domains.
Proposition~\ref{prop:RH} plays a central role in the proof.
The argument separates the boundary estimate for $\G|f|/\delta^s$
from the recovery of the full Sobolev norm.
For a nonnegative function harmonic near a boundary patch, boundary
Harnack principle transfers the Green-function square estimate on parallel
surfaces to $\G|f|/\delta^s$. The two inputs are recorded as
Lemmas~\ref{thm:bhp} and \ref{thm:square} in
Section~\ref{s2.1}.
The boundary H\"older estimate controls the remaining powers of the
quotient. Integrating over the distance level sets gives
\eqref{eq1.4} whenever $(s-\beta_\Omega)(p-2)<1$ for $p>2$. Meanwhile,
lower exponents follow by H\"older's inequality. This yields
Proposition~\ref{prop:RH}.

Next, a covering by boundary patches and interior regions converts
\eqref{eq1.4} into uniform annular moments of the kernel
$G_s(x,y)/\delta(x)^s$. A dual maximal-function argument then gives
the quotient estimate on the exact gain line $1/p=1/q-s/n$.
From the fact an annular exponent strictly larger than the target exponent,
it follows that a strong estimate holds on the exact gain line.

To pass from the quotient $\G|f|/\delta^s$ to $W^{t,p}(\R^n)$ norm of $\G f$, we decompose bounded
nonnegative data into a part supported near each Whitney cube and
its complement. The local Green potential is compared with the
whole-space Riesz potential; both harmonic remainders remain
nonnegative on $\R^n$. Difference estimates for the Riesz potential
and interior estimates for the harmonic terms control nearby point
pairs. The distance quotient controls distant interior pairs and
pairs crossing the boundary. Summation over the Whitney cubes proves
Proposition~\ref{prop:lifting}, including the strictly subcritical case and the gradient differences required when $t>1$.

For $t>s$, the quotient becomes $u/\delta^t$, and the coarea
condition is $(t-s)p+(s-\beta_\Omega)(p-2)<1$. This yields
$P_{t,\Omega}$ in \eqref{eq:Pt}. The dual argument has potential
order $2s-t$, while the lifting uses differences of the gradient
when $t>1$ and a separate first-derivative estimate when $t=1$.
Combining these estimates gives Theorems~\ref{t1.1} and
\ref{thm:higher}. Bounded
truncations converge in the asserted Sobolev space, so the weak
equation passes to the limit without assuming energy membership.
The explicit choice of the data range then proves
Corollary~\ref{c1.1}.

The remainder of this paper is organized as follows.
Section~\ref{s2} shows the boundary and annular-kernel estimates. 
Section~\ref{s3} establishes the
local potential estimates, the weighted zero-extension estimate,
and the Sobolev lifting.
Section~\ref{s4} proves Proposition~\ref{prop:lifting}
and uses it to prove Theorems~\ref{t1.1} and
\ref{thm:higher} and Corollary~\ref{c1.1}.
Section~\ref{s5} constructs the
counterexamples in Theorem~\ref{thm:optimal}.

Finally, we make some conventions on notation.
We use $B(x,r)=B_r(x)$ for Euclidean balls
with center $x$ and radius $r$. set $B_r=B_r(0)$.
For the fixed domain $\Omega$, write $D=\diam(\Omega)$ and
$\delta(x)=\dist(x,\partial\Omega)$ for $x\in\Omega$.
The symbols $|E|$ and $\mathcal H^{n-1}$ denote Lebesgue measure
and $(n-1)$-dimensional Hausdorff measure, respectively.
We write $x=(x',x_n)\in\R^{n-1}\times\R$.
For a measurable set $E$, $\one_E$ denotes its indicator, and
$\fint_E h\,d\mu=\mu(E)^{-1}\int_Eh\,d\mu$.
For $q\in[1,\infty]$, let $q':=q/(q-1)$.

Throughout the positive estimates, $s$ is the fractional order,
$t$ the differentiability order, $p$ the target exponent, and
$q$ the data exponent. Capital $P$ denotes an auxiliary annular
moment exponent. The potential gain is $\alpha=2s-t$.
The exponent $\varepsilon_\Omega$ belongs to the
two-sided boundary estimate in Lemma~\ref{le2.4}, whereas
$\beta_\Omega$ is the upper decay exponent in
Lemma~\ref{le2.9}. These two exponents are not identified.
Data in whole-space potentials are extended by zero.

The positive constants $C$ and $c$ may change from line to line.
Their dependencies are stated in each result. Dependence on
$\Omega$ includes its diameter and Lipschitz character, including
a localization radius. We write $A\sim B$ if $cB\le A\le CB$
with constants independent of the variables under consideration.

\section{Preliminaries}\label{s2}

\subsection{Fractional harmonic functions on Lipschitz domains}\label{s2.1}

We specify the meaning of fractional harmonicity and list some necessary
boundary estimates.

\begin{definition}[$s$-harmonic functions]\label{def:sharm}
Let $n\ge2$, $s\in(0,1)$, and $O\subset\R^n$ be a domain. A function
$v\in L^1(\R^n,w_s)$ is \emph{$s$-harmonic in $O$} if,
for any function $\phi\in C_{\rm c}^\infty(O)$,
\begin{equation*}
 \int_{\R^n}v(x)(-\Delta)^s\phi(x)\,dx=0.
\end{equation*}
Here $w_s(x)=(1+|x|)^{-n-2s}$. Thus $s$-harmonicity is precisely
the distributional equation $(-\Delta)^sv=0$ in $O$.
\end{definition}

\begin{remark}
The function $v$ is defined on $\R^n$, and the equation is imposed in $O$. The tail condition
makes the integral finite. If $v$ is $C^2$ locally in $O$, this
definition agrees with the principal-value equation from
\eqref{eq1.2}. Neither a sign condition nor zero exterior values
are part of $s$-harmonicity itself. In particular, when $O=D_r(\xi)$,
the admissible class in Definition~\ref{d1.1} additionally imposes
global nonnegativity, continuity on $B(\xi,r)$, and vanishing in
$B(\xi,r)\cap\Omega^c$. No energy-space assumption is imposed by
Definition~\ref{def:sharm}.
\end{remark}

We next recall the Lipschitz domain.

\begin{definition}[Lipschitz domains]\label{d2.1}
A bounded domain $\Omega\subset\R^n$ is called a Lipschitz domain
if there are $r_0>0$ and $L>0$ such that, for any
$\xi\in\partial\Omega$, a rigid change of coordinates taking
$\xi$ to the origin gives
\begin{equation*}
 \Omega\cap B_{r_0}
 =\{(x',x_n):x_n>\varphi_\xi(x')\}\cap B_{r_0},
\end{equation*}
where $\varphi_\xi(0)=0$,
$|\varphi_\xi(x')-\varphi_\xi(y')|\le L|x'-y'|$, and
the constants $r_0$ and $L$ are independent of $\xi$.
We may decrease $r_0$ by a factor depending only on the Lipschitz
constant $L$ of $\boz$ when applying boundary estimates.
\end{definition}

\begin{remark}\label{r2.2}
There is $c_{\rm cs}\in(0,1)$, depending only on $n$ and $\boz$, such that, for any $\xi\in\partial\Omega$
and $r\in(0,\,r_0)$, there are points $A_{\xi,r}$ and $A^{\rm ext}_{\xi,r}$ that
satisfy $B(A_{\xi,r},c_{\rm cs}r)\subset\Omega\cap B(\xi,r)$ and
$B(A^{\rm ext}_{\xi,r},c_{\rm cs}r)\subset\Omega^c\cap B(\xi,r)$.
In this case, $A_{\xi,r}$ and $A^{\rm ext}_{\xi,r}$ are called the interior and exterior corkscrew points, respectively.
The corresponding balls are called corkscrew balls. The notation
$A_{\xi,r}$ will always refer to an interior point.
\end{remark}

The Green function satisfies the following point-wise estimate (see, for instance, \cite{B99,BKN2002}).

\begin{lemma}\label{le2.5}
Let $n\ge2$, $s\in(0,1)$, and $\Omega\subset\R^n$ be a bounded
Lipschitz domain. Then, for any $x,y\in\Omega$ with $x\ne y$,
\begin{equation*}
 0\le G_s(x,y)\le C|x-y|^{2s-n},
\end{equation*}
where $C$ is a positive constant depending only on $n$, $s$, and $\Omega$.
\end{lemma}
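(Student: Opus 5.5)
The plan is to compare $G_s(\cdot,y)$ with the whole-space fundamental solution $\Phi_s(x-y)=c'_{n,s}|x-y|^{2s-n}$ of $(-\Delta)^s$ and to use the maximum principle. Here $n\ge2>2s$, so $\Phi_s$ is positive, locally integrable, and satisfies $(-\Delta)^s\Phi_s(\cdot-y)=\delta_y$ in $\R^n$.

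Fix $y\in\Omega$ and set $h_y(x):=\Phi_s(x-y)-G_s(x,y)$. Then $(-\Delta)^sh_y=0$ distributionally in $\Omega$: for $\phi\in C_{\rm c}^\infty(\Omega)$ both $\int\Phi_s(\cdot-y)(-\Delta)^s\phi$ and $\int G_s(\cdot,y)(-\Delta)^s\phi$ equal $\phi(y)$. Moreover $h_y\in L^1(\R^n,w_s)$ because $\Phi_s$ decays like $|x|^{2s-n}$ and $G_s(\cdot,y)$ is integrable with compact support. In $\Omega^c$ we have $h_y=\Phi_s(\cdot-y)\ge0$. The weak maximum principle for $s$-harmonic functions then gives $h_y\ge0$ in $\Omega$, that is, $G_s(x,y)\le c'_{n,s}|x-y|^{2s-n}$.

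Nonnegativity of $G_s$ follows from the same principle applied to $G_s(\cdot,y)$ itself. It is $s$-harmonic in $\Omega\setminus\{y\}$ and vanishes outside $\Omega$. Near $y$ it behaves like $\Phi_s+h_y$ with $h_y$ locally bounded (by interior regularity of $s$-harmonic functions), so it tends to $+\infty$ at $y$. Alternatively, one can use the probabilistic representation of $G_s$ as the occupation density of the killed stable process.

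The main obstacle is justifying the maximum principle at this low regularity, since $h_y$ is only a distributional solution. I would first mollify: $h_y*\rho_\varepsilon$ is smooth and $s$-harmonic in $\{\delta>\varepsilon\}$, and it is nonnegative outside by a limiting argument. The pointwise maximum principle then applies at a negative interior minimum, where $(-\Delta)^s$ of the function would be strictly negative. Letting $\varepsilon\to0$ finishes the argument. Alternatively, I would simply cite the standard comparison $G_\Omega\le G_{\R^n}$ from \cite{B99,BKN2002}. In either form the constant depends only on $n$ and $s$, and hence certainly only on $n$, $s$, and $\Omega$.
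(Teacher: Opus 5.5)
The paper does not prove this lemma; it quotes it from \cite{B99,BKN2002}. Your fallback of citing $G_\Omega\le G_{\R^n}$ is therefore exactly the paper's route. Your self-contained argument, however, has a genuine gap at the step where the weak maximum principle is supposed to give $h_y\ge0$.

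The obstacle is not interior regularity: distributional $s$-harmonic functions in $L^1(\R^n,w_s)$ are automatically smooth inside $\Omega$. It is boundary behaviour. In your class (distributionally $s$-harmonic in $\Omega$, in $L^1(\R^n,w_s)$, and $\ge0$ a.e.\ in $\Omega^c$) the maximum principle is false. On $\Omega=B_1$, the function $-(1-|\cdot|^2)_+^{s-1}$ is a counterexample. It is $s$-harmonic in $B_1$ (classical; e.g.\ Dyda's power-function formula with exponent $s-1$). It lies in $L^1(\R^n)$, vanishes outside $B_1$, and is negative throughout $B_1$.

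For the same reason, the defining problem for $G_s$, read purely distributionally, does not determine $G_s$. The function $G_s(\cdot,y)\pm c\,(1-|\cdot|^2)_+^{s-1}$ solves it too. For every $c>0$, the minus sign produces negative values near $\partial B_1$. The plus sign produces a function unbounded near $\partial B_1$, where $|x-y|^{2s-n}$ stays bounded. So no argument using only the equation and the exterior condition can prove the lemma.

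Your mollification repair is circular. The function $h_y*\rho_\varepsilon$ is $s$-harmonic only in $\{\delta>\varepsilon\}$. Its nonnegativity on the strip $\Omega\cap\{\delta\le\varepsilon\}$ is precisely the missing boundary information, and in the example above it fails badly there.

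What you need is a property that singles out the canonical Green function, which is what the cited references supply. One option is that $G_s(\cdot,y)$ is continuous on $\R^n\setminus\{y\}$ and vanishes on $\partial\Omega$, since every boundary point of a Lipschitz domain is regular for the stable process. Another is Hunt's formula $G_s(x,y)=\Phi_s(x-y)-\mathbb E^x[\Phi_s(X_{\tau_\Omega}-y)]$, with $X$ the isotropic $2s$-stable process and $\tau_\Omega$ its exit time. It gives $G_s\le\Phi_s$ at once, while $G_s\ge0$ because $G_s$ is an occupation density.

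Granting boundary continuity, your argument closes. The function $h_y$ is $s$-harmonic in all of $\Omega$, hence smooth there. It is continuous on $\overline\Omega$ and equals $\Phi_s(\cdot-y)>0$ on $\partial\Omega$. A negative infimum would therefore be attained at some $x_0\in\Omega$, and \eqref{eq1.2} would give $(-\Delta)^sh_y(x_0)<0$, a contradiction.

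The same argument on $\Omega\setminus\{y\}$ gives $G_s(\cdot,y)\ge0$. It uses that $G_s(\cdot,y)=\Phi_s(\cdot-y)-h_y$ tends to $+\infty$ at $y$; note the sign, not $\Phi_s+h_y$ as you wrote. The resulting constant depends only on $n$ and $s$.
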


We then recall the boundary decay estimate from
\cite{B99} (see also \cite[Theorem~2.2]{CFR26}).

\begin{lemma}[Boundary H\"older decay]\label{le2.4}
Let $n\ge2$, $s\in(0,1)$, and $\Omega\subset\R^n$ be a bounded
Lipschitz domain. Then, for any $\xi\in\partial\Omega$ and $r\in(0,\,r_0/2)$,
there are $\varepsilon_\Omega\in(0,s)$ and
$C\in(1,\,\fz)$, depending only on $n,s$ and $\Omega$, such
that every function $u$ admissible on $B(\xi,2r)$ satisfies that, for any $x\in D_r(\xi)$,
\[
 C^{-1}u(A_{\xi,r})
 \left[\frac{\delta(x)}r\right]^{2s-\varepsilon_\Omega}
 \le u(x)\le
 Cu(A_{\xi,r})\left[\frac{\delta(x)}r\right]^{\varepsilon_\Omega}.
\]
Here $D_r(\xi):=B(\xi,r)\cap\Omega$.
\end{lemma}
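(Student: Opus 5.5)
The plan is to derive both inequalities in Lemma~\ref{le2.4} from two potential-theoretic ingredients on Lipschitz domains: the boundary Harnack principle of Bogdan \cite{B97}, and an iteration of oscillation decay for the upper bound combined with comparison against an explicit subsolution (or a cone barrier) for the lower bound. Fix $\xi\in\partial\Omega$, $r\in(0,r_0/2)$, and $u$ admissible on $B(\xi,2r)$. By scaling $x\mapsto \xi+rx$ we may assume $r=1$; the admissible class is preserved because $(-\Delta)^s$ is homogeneous and the Lipschitz character of the rescaled domain is controlled by $L$. By the Harnack chain condition together with the Carleson estimate, we normalize $u(A_{\xi,1})=1$ and obtain $\sup_{D_{3/2}(\xi)}u\le C$. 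This step uses only the interior Harnack inequality and the nonnegativity of $u$ on all of $\R^n$, which controls the nonlocal tail.

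For the upper bound, we use the exterior corkscrew balls of Remark~\ref{r2.2}. At each scale $\rho\le1$ and each boundary point $\eta\in\partial\Omega\cap B(\xi,1)$, the set $B(\eta,\rho)\cap\Omega^c$ contains a ball of radius $c_{\rm cs}\rho$ on which $u=0$. A standard measure-decay (De Giorgi or Krylov--Safonov type) lemma for nonnegative $s$-harmonic functions then gives
\[
\sup_{D_{\rho/2}(\eta)}u\le \theta\sup_{D_{\rho}(\eta)}u+C\,\mathrm{Tail}
\]
for some $\theta<1$. The tail term must be handled with care, since $u$ is only nonnegative and in $L^1(w_s)$ rather than bounded. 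We do this by splitting $u=u\one_{B(\xi,2)}+u\one_{B(\xi,2)^c}$. The far part is a nonnegative function that is $s$-harmonic near the patch, and its contribution is controlled by $u(A_{\xi,1})$ via the Poisson kernel comparison from \cite{B99}. Iterating the decay over dyadic scales gives $u(x)\le C\delta(x)^{\varepsilon}$ with $\varepsilon=\log_2(1/\theta)$. Shrinking $\varepsilon$ if necessary gives $\varepsilon<s$.

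For the lower bound, we compare $u$ with the Green function $G_s(\cdot,A_{\xi,1})$, or with an explicit barrier, on the patch via the boundary Harnack principle. This yields $u\sim G_s(\cdot,A)$ near $\xi$, with constants depending on the normalization. It then suffices to bound a single positive $s$-harmonic function from below by $\delta^{2s-\varepsilon}$. For this we use the interior cone condition: at $x$ with nearest boundary point $\eta$, the domain contains a truncated cone with vertex $\eta$ and aperture determined by $L$. The homogeneous $s$-harmonic function in that cone, vanishing outside it, behaves like $|x-\eta|^{\gamma}$ with $\gamma<2s$ (Michalik \cite{M06}; $\gamma\to 2s$ only as the cone degenerates). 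By the comparison principle this function lies below $Cu$ in the truncated cone, which gives $u(x)\ge c\,\delta(x)^{\gamma}$. Setting $\varepsilon_\Omega:=\min\{\varepsilon,2s-\gamma\}$ makes both inequalities hold with a common exponent.

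The main obstacle is making the comparison and iteration steps rigorous for merely distributional, $L^1(w_s)$, continuous-on-the-patch solutions. The maximum principle must be applied to $u$ minus a barrier, and the tail outside $B(\xi,2)$ must be controlled by $u(A_{\xi,1})$ uniformly. We would first try to reduce to \cite{B99} and \cite[Theorem~2.2]{CFR26}, where these estimates are stated for nonnegative functions that are harmonic with respect to the stable process. The remaining check is that distributional $s$-harmonicity plus continuity implies probabilistic harmonicity through mean-value or Poisson-kernel representation on balls, which is standard.
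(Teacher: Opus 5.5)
The paper gives no argument for this lemma; it is quoted from \cite{B99} and \cite[Theorem~2.2]{CFR26}. Your closing fallback is therefore exactly the paper's route: identify continuous distributional $s$-harmonic functions with harmonic functions of the stable process, then invoke those results. Your self-contained sketch, however, has a step in the lower bound that fails as written.

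You assert that the homogeneous cone solution $U_\Gamma(\cdot-\eta)$ lies below $Cu$ in the truncated cone $T=(\eta+\Gamma)\cap B(\eta,c)$ by the comparison principle. Comparison would require $U_\Gamma(\cdot-\eta)\le Cu$ on $\R^n\setminus T$. But $U_\Gamma(\cdot-\eta)$ is positive, and grows like $|y-\eta|^{\gamma}$, on the unbounded set $(\eta+\Gamma)\setminus B(\eta,c)$. That set leaves $\Omega$, where $u=0$. Truncating does not help: for $x\in T$,
$(-\Delta)^s\big(U_\Gamma(\cdot-\eta)\one_{B(\eta,c)}\big)(x)=c_{(n,s)}\int_{\R^n\setminus B(\eta,c)}U_\Gamma(y-\eta)|x-y|^{-n-2s}\,dy>0$,
so the truncated function is a supersolution, which is the wrong direction for a lower bound.

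What is missing is a boundary Harnack principle in the cone. First compare $u$ with the $s$-harmonic function $w$ in $T$ whose exterior datum is the indicator of a small ball on the axis just beyond $B(\eta,c)$. On that ball $u\ge c\,u(A_{\xi,r})$ by Harnack chains, so this comparison is legitimate because $u\ge0$ everywhere. Then apply Bogdan's boundary Harnack principle in the Lipschitz domain $\eta+\Gamma$ near its vertex to get $w\sim U_\Gamma(\cdot-\eta)$ there, hence $w(y)\gtrsim|y-\eta|^{\gamma}$ on the axis.

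You should also place the vertex at the vertical projection $(x',\varphi_\xi(x'))$ rather than at a nearest boundary point. Then $x$ sits on the axis of the vertical interior cone, at distance comparable to $\delta(x)$ from the vertex. At a nearest boundary point this can fail: for $L>1$, already for a straight boundary of slope $L$, $x$ need not lie in the vertical interior cone there.

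There are two smaller points. First, the detour through $G_s(\cdot,A_{\xi,1})$ is not available. With the pole inside the patch, this function is not $s$-harmonic in $D_2(\xi)$, so Lemma~\ref{thm:bhp} does not apply. The detour is also unnecessary, because the cone argument works for $u$ itself. Second, in the upper bound, splitting off $u\one_{B(\xi,2)^c}$ controls only the far tail. The contribution of $u$ on $B(\xi,2)\setminus B(\eta,\rho)$ is of order one at every scale. It must be absorbed by the standard induction over dyadic scales: use the bounds already established at larger scales, and take $\varepsilon$ small enough that only the excess over the current bound enters the tail.

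With these repairs your direct route works. Its lower half, however, rests on the same boundary Harnack machinery of \cite{B97,B99} that the paper uses through citation.
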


The next lemma is the localized boundary Harnack principle from
\cite[Theorem~2.3]{CFR26}. We normalize at the corkscrew point
$A_{\xi,r}$ from Remark~\ref{r2.2}. The epigraph formulation in
that reference gives the following version in a boundary chart of
$\Omega$, after translation and rescaling. Changing between
interior reference points at comparable scales only changes the
constants by the interior Harnack inequality.

\begin{lemma}[Boundary Harnack principle]\label{thm:bhp}
Let $n\geq2$, $s\in(0,1)$, and $\Omega$ be a bounded Lipschitz
domain.
Assume that $\xi\in\partial\Omega$ and $0<r<r_0/2$, and $u,v$
are admissible on $B(\xi,2r)$ in the sense of
Definition~\ref{d1.1}, with $u(A_{\xi,r})>0$ and
$v(A_{\xi,r})>0$. For any $x\in D_r(\xi)$, set
\[
 H_{u,v}(x):=\frac{u(x)v(A_{\xi,r})}{v(x)u(A_{\xi,r})}.
\]
Then, there exist $C\in(1,\,\fz)$ and $\vartheta\in(0,1)$, depending only
on $n,s$ and $\Omega$, such that
 $H_{u,v}$ extends to a H\"older continuous function on
$\overline{D_r(\xi)}$, and, for any $x,\,y\in D_r(\xi)$,
\begin{equation}
 C^{-1}\leq H_{u,v}(x)\leq C\qquad\text{and}\qquad
 |H_{u,v}(x)-H_{u,v}(y)|
 \leq C\left(\frac{|x-y|}{r}\right)^\vartheta.
\end{equation}
In particular, for any $x\in D_r(\xi)$,
\begin{equation}\label{eq:bhpcomp}
 C^{-1}\frac{u(A_{\xi,r})}{v(A_{\xi,r})}
 \leq\frac{u(x)}{v(x)}
 \leq C\frac{u(A_{\xi,r})}{v(A_{\xi,r})}.
\end{equation}
\end{lemma}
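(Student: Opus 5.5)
The plan is to reduce the statement to the epigraph version of the boundary Harnack principle, \cite[Theorem~2.3]{CFR26}, by three operations: a rigid motion, a dilation, and a change of normalization point. All constants must stay uniform in $\xi$ and $r$.

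\emph{Step 1: flatten.} Fix $\xi\in\partial\Omega$ and $0<r<r_0/2$. Apply the rigid change of coordinates of Definition~\ref{d2.1}, so that $\xi=0$ and $\Omega\cap B_{r_0}=\{x_n>\varphi_\xi(x')\}\cap B_{r_0}$ with $\mathrm{Lip}(\varphi_\xi)\le L$. The fractional Laplacian commutes with rigid motions, and so does the tail weight $w_s$. Hence admissibility on $B(\xi,2r)$ is preserved.

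\emph{Step 2: rescale.} Set $u_r(x):=u(rx)$ and $v_r(x):=v(rx)$. Since $(-\Delta)^s[u(r\cdot)]=r^{2s}[(-\Delta)^su](r\cdot)$ in the distributional sense, testing with $\phi(\cdot/r)$ shows the following:
\begin{itemize}
\item $u_r$ and $v_r$ are $s$-harmonic in $B_2\cap\{x_n>\varphi_r(x')\}$, where $\varphi_r(x'):=\varphi_\xi(rx')/r$ still has Lipschitz constant at most $L$ and $\varphi_r(0)=0$;
\item $u_r$ and $v_r$ are globally nonnegative, continuous on $B_2$, and vanish on $B_2\setminus\{x_n>\varphi_r\}$;
\item they belong to $L^1(\R^n,w_s)$, because $w_s(x/r)\sim_r w_s(x)$. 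Only membership is needed here, not a quantitative bound, since the conclusion is a ratio estimate that is invariant under multiplying $u$ or $v$ by constants.
\end{itemize}
Outside $B_2$ we extend $\varphi_r$ to a globally $L$-Lipschitz function, for instance by McShane extension. This changes nothing inside $B_2$, which is the only region where the hypotheses of the epigraph theorem are imposed. We can therefore apply \cite[Theorem~2.3]{CFR26} in $B_2$ and obtain the conclusion in $B_1$: two-sided comparability and $\vartheta$-H\"older continuity of $u_r/v_r$, normalized at the reference point used there (say $e_n/2$). The constants depend only on $n$, $s$ and $L$. Undoing the dilation converts $|x-y|$ into $|x-y|/r$, which is exactly the form of the claimed H\"older bound.

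\emph{Step 3: change the normalization point.} This is the step where care is needed. The rescaled corkscrew point $A_{\xi,r}/r$ and the reference point of \cite{CFR26} both lie in $B_1\cap\{x_n>\varphi_r\}$, at distance at least $c\,c_{\rm cs}$ from the graph. They are therefore joined by a Harnack chain of balls whose number depends only on $c_{\rm cs}$ and $L$. Because admissible functions are nonnegative on all of $\R^n$, the interior Harnack inequality for nonnegative $s$-harmonic functions applies on each ball of the chain. Hence $u_r$ and $v_r$ take comparable values at the two points, which changes $H_{u,v}$ only by bounded multiplicative constants. The H\"older bound survives: $H_{u,v}$ differs from the \cite{CFR26}-normalized quotient by a constant factor bounded above and below, so its oscillation bound is multiplied by a bounded constant only.

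\emph{Conclusion.} The hypothesis $v(A_{\xi,r})>0$, combined with the Harnack chain, gives $v>0$ in $D_r(\xi)$, so $H_{u,v}$ is well defined there. The uniform H\"older bound makes $H_{u,v}$ uniformly continuous on $D_r(\xi)$, so it extends continuously, with the same estimate, to $\overline{D_r(\xi)}$. Finally, \eqref{eq:bhpcomp} is just the two-sided bound $C^{-1}\le H_{u,v}\le C$ rewritten.

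The main point to verify is Step 3, together with the requirement in Step 2 that the Lipschitz constant and the corkscrew constant stay uniform under rescaling. Both follow from the scale-invariance of the Lipschitz character.
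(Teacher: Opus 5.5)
Your proposal is correct and follows the paper's route. The paper obtains this lemma by applying the epigraph boundary Harnack principle \cite[Theorem~2.3]{CFR26} in a boundary chart after translation and rescaling, and then moves the normalization to the corkscrew point $A_{\xi,r}$ using the interior Harnack inequality, which only changes constants. Your write-up supplies the details the paper leaves implicit: invariance of admissibility and of the Lipschitz character under rigid motions and dilations, and the Harnack chain between the two reference points, which is legitimate because admissible functions are nonnegative on all of $\mathbb{R}^n$.
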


We also need the Green function square estimate on interior
distance level sets proved in \cite[Theorem~3.3]{CFR26}.
For $\xi\in\partial\Omega$ and $r,h>0$, denote
\[
 S_h(\xi,r):=B(\xi,r)\cap\Omega\cap\{x:\delta(x)=h\}.
\]
All averages on $S_h(\xi,r)$ below are taken with respect to
$\mathcal H^{n-1}$.

\begin{lemma}[Square estimate on parallel surfaces]\label{thm:square}
Let $n\geq2$, $s\in(0,1)$, $\Omega\subset\R^n$ be a bounded
Lipschitz domain, and $x_*\in\Omega$. Then, there exist constants
$r_1>0$ and $c_0\in(0,1)$, depending only on $\Omega$, such that, for any $\xi\in\partial\Omega$
and $r\in(0,\,r_1)$ satisfying $|x_* -\xi|>2r$,
\begin{equation}\label{eq:square}
 \left\{\fint_{S_h(\xi,r)}
       \left[\frac{G_s(x,x_*)}{h^s}\right]^2
       \,d\mathcal H^{n-1}(x)\right\}^{1/2}
 \sim\frac{G_s(A_{\xi,r},x_*)}{r^s}
\end{equation}
holds for any $h\in(0,\,c_0r)$. Here
the comparison constants depend only on $n,\,s$, and $\Omega$ and are independent of $\xi,\,r,\,h$, and
the pole $x_*$ subject to the stated separation condition.
\end{lemma}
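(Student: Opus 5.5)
The plan is to treat the fixed-pole Green function $u:=G_s(\cdot,x_*)$ as an admissible function on $B(\xi,2r)$ in the sense of Definition~\ref{d1.1}. Since $|x_*-\xi|>2r$, $u$ is nonnegative, $s$-harmonic in $D_{2r}(\xi)$, and vanishes on $B(\xi,2r)\cap\Omega^c$. By homogeneity of \eqref{eq:square} in $u$, I would normalize $u(A_{\xi,r})=1$. The two inequalities in \eqref{eq:square} would then be proved separately: the lower bound by a soft argument, and the upper bound by a Rellich--Pohozaev type identity in the spirit of Dahlberg's $L^2$ estimate for harmonic measure.

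\emph{Lower bound.} By Cauchy--Schwarz it suffices to show $\fint_{S_h(\xi,r)}u/h^s\,d\mathcal H^{n-1}\gtrsim r^{-s}$. On a fixed proportion of $S_h(\xi,r)$, say near the "vertical" segment below $A_{\xi,r}$ in the boundary chart, I would compare $u$ with a model admissible function $v$ for which $v\gtrsim(h/r)^s$ holds there. One choice is the solution of $(-\Delta)^s v=\one_{B(A_{\xi,r},c_{\rm cs}r/2)}$ with zero exterior data in a smooth inner domain touching $\partial\Omega$ along a cone. Lemma~\ref{thm:bhp}, applied to $u$ and this comparison function, would give $u\ge c\,v$ on $D_{r}(\xi)$, and hence the lower bound. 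An alternative route would use the weak equation with a test function localized near $S_h$ together with Lemma~\ref{le2.4}; I would decide between the two routes once the details of the comparison domain are fixed.

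\emph{Upper bound.} First I would replace the level set $S_h(\xi,r)$ by the shifted graph $\Gamma_h:=\{x_n=\varphi_\xi(x')+h'\}$ with $h'\sim h$. This is permitted because $\delta\sim x_n-\varphi_\xi$ in the chart, and the interior Harnack inequality at scale $h$ makes $u$ comparable on the two surfaces, so the $L^2$ averages agree up to constants. On $\Gamma_h$ one cannot use the pointwise bound $u\lesssim (h/r)^{\varepsilon_\Omega}$ from Lemma~\ref{le2.4}, because $\varepsilon_\Omega<s$ and the resulting bound on $u/h^s$ degenerates as $h\to0$. Instead I would take a cutoff $\eta$ adapted to $B(\xi,3r/2)$ and apply the fractional Pohozaev identity (Ros-Oton--Serra) with the transversal vector field $X=e_n$ to $\eta u$. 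I would apply it on the smooth approximating domains $\Omega_k$ obtained by mollifying $\varphi_\xi$, and then pass to the limit. Since $X\cdot\nu\ge (1+L^2)^{-1/2}$ on the graph, the boundary term controls $\int_{\Gamma}(u/\delta^s)^2\,d\mathcal H^{n-1}$. All other terms are bounded by local energy $[\eta u]_{W^{s,2}}^2$ and by tail integrals. I would control these by Caccioppoli's inequality, the Carleson estimate $u\lesssim u(A_{\xi,r})$ on $D_{3r/2}(\xi)$ (from Lemma~\ref{le2.4}), and the weighted tail bound $\|u\|_{L^1(w_s)}$, with the commutator $(-\Delta)^s(\eta u)-\eta(-\Delta)^s u$ estimated from the decay of the kernel away from the support of $\eta$. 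Scaling shows that the resulting bound is $\lesssim r^{n-1-2s}$. The uniformity in $h$ would then come from applying the same identity in the domains $\{x_n>\varphi_\xi+h'\}$, where $u$ still vanishes in the sense of the boundary term only up to an error. I would handle that error by the boundary Harnack principle: Lemma~\ref{thm:bhp} gives that $u/v$ is Hölder continuous up to $\partial\Omega$, so the $L^2$ average of $u/h^s$ on $\Gamma_h$ is comparable to that of $v/h^s$ for any one fixed admissible $v$. This reduces the uniformity to the boundary estimate at $h\to0$ for a single function.

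The main obstacle is the Pohozaev step. The identity is classical only for $C^{1,1}$ domains and globally defined solutions, while here $u$ is merely $s$-harmonic in a Lipschitz patch and carries a nonlocal tail. I would first prove the identity for $\Omega_k$ with constants depending only on $L$. I would then verify that the localization error terms involving $u$ outside $B(\xi,3r/2)$ are dominated by $u(A_{\xi,r})^2r^{n-1-2s}$, using the Green bound of Lemma~\ref{le2.5} together with the separation $|x_*-\xi|>2r$. Finally, I would pass $k\to\infty$ using the lower semicontinuity of the boundary term and the uniform interior convergence of the approximating solutions.
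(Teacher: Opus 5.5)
The paper gives no proof of this lemma. It is quoted from \cite[Theorem~3.3]{CFR26}, and only the upper half of \eqref{eq:square} is used later (in \eqref{equ4.2} and \eqref{equ4.4}). Judged as a self-contained argument, your plan has a genuine gap exactly where the lemma goes beyond a boundary $L^2$ estimate, namely the bound on every interior level set $S_h(\xi,r)$ uniformly in $h$. Even granting the Pohozaev step, on the approximants $\Omega_k$ it controls only a boundary trace of the approximating solutions, $\int_{\partial\Omega_k}(\eta u_k/\delta_k^s)^2\,d\mathcal H^{n-1}$. After $k\to\infty$ this gives at best a density on $\partial\Omega$, and it says nothing about $\int_{S_h}(u/h^s)^2\,d\mathcal H^{n-1}$ for a fixed $h>0$. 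Neither of your bridges closes this.
\begin{itemize}
\item In the shifted domain $\{x_n>\varphi_\xi+h'\}$ the identity produces no term $\int_{\Gamma_h}(u/h^s)^2$ at all. That boundary term comes from the $\delta^s$-vanishing of the function outside the domain, whereas $u>0$ below $\Gamma_h$.
\item The boundary Harnack step only trades $u$ for another admissible $v$ (which, moreover, depends on the patch), and the same $h$-uniform bound is still needed for $v$. Knowledge as $h\to0$ does not control a fixed $h$: there is no monotonicity in $h$, Fatou's lemma points the wrong way, and on a Lipschitz boundary $v/\delta^s$ need not have a limit.
\end{itemize}
What is missing is a fractional analogue of the Caffarelli--Fabes--Mortola--Salsa comparison. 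That is, you need a boundary measure $\mu=g\,d\mathcal H^{n-1}$ with $\mu(B(\zeta,\rho)\cap\partial\Omega)\sim\rho^{n-1-s}u(A_{\zeta,\rho})$ for all $\zeta\in\partial\Omega\cap B(\xi,2r)$ and $\rho\lesssim r$. With it, $u(x)/h^s\lesssim\fint_{B(\zeta,Ch)\cap\partial\Omega}g\,d\mathcal H^{n-1}$ for $x\in S_h$ with nearest boundary point $\zeta$. The boundary maximal theorem then transfers a local $L^2$ bound for $g$ to every $S_h$, and covering $S_h$ by pieces of size $h$ gives the lower half as well.

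The other two ingredients are also not in place.

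\emph{Lower bound.} A $C^{1,1}$ (or $C^{1,\alpha}$) comparison domain $\tilde\Omega\subset\Omega$ gives $v\gtrsim(h/r)^s$ at a point of $S_h$ only if $\partial\tilde\Omega$ stays at distance $\gtrsim h$ from it. For all small $h$ this confines you to a neighbourhood of the contact set $\partial\tilde\Omega\cap\partial\Omega$. A domain built from the interior cone at $\xi$ generically meets $\partial\Omega$ only at $\xi$. For a Lipschitz graph with salient corners at all scales, the contact set of any such $\tilde\Omega$ can be $\mathcal H^{n-1}$-null. So no fixed proportion of $S_h$ is obtained, while elsewhere Lemma~\ref{le2.4} only gives $(h/r)^{2s-\varepsilon_\Omega}$. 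Moreover, Lemma~\ref{thm:bhp} does not apply to $v$, which vanishes on $\Omega\setminus\tilde\Omega$; the comparison would have to come from the maximum principle.

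\emph{Pohozaev step.} The commutator $(-\Delta)^s(\eta u)-\eta(-\Delta)^su$ is not governed by kernel decay away from $\mathrm{supp}\,\eta$. In the layer where $\nabla\eta\neq0$ meets $\partial\Omega$ it has size about $r^{-1}u\,\delta^{1-2s}$ when $s>1/2$. Its pairing with $\partial_n(\eta u)$, which has size $u/\delta$, is therefore bounded only by $r^{-1}\int u^2\delta^{-2s}\,dx$ over that layer. Up to constants, this is the integrated square quantity you are trying to bound, and it is not controlled by $[\eta u]_{W^{s,2}}$ and tail integrals. The pointwise decay from Lemma~\ref{le2.9} closes it only if $\beta_\Omega>s-1/2$, which fails in general for $n\ge3$ (see the remark following Lemma~\ref{le2.9}).
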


We take the fixed localization radius $r_0$ small enough that the above
results apply in all subsequent boundary patches, in particular,
$r_0\leq r_1$.

\begin{lemma}[Upper boundary decay]\label{le2.9}
Let $n\ge2$, $s\in(0,1)$, and $\Omega\subset\R^n$ be a bounded
Lipschitz domain. Then, for any $\xi\in\partial\Omega$ and $r\in(0,\,r_0/8)$,
there are positive constants $C$ and $\beta_\Omega\in(0,s)$,
depending only on $n,s$ and $\Omega$, such that,
every function $v$ admissible on $B(\xi,8r)$ satisfies that, for any $x\in D_r(\xi)$,
\begin{equation}\label{eq2.4}
 v(x)\le Cv(A_{\xi,r})\left[\frac{\delta(x)}r\right]^{\beta_\Omega}.
\end{equation}
If $n=2$ and $s\in(1/2,\,1)$, one may choose
$\beta_\Omega>s-1/2$.
\end{lemma}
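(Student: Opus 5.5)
The plan is to read the general case directly off the two-sided estimate of Lemma~\ref{le2.4}, and to obtain the improved planar exponent by comparing $v$ with homogeneous $s$-harmonic functions in the complement of an exterior sector.

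\emph{General case.} If $v$ is admissible on $B(\xi,8r)$, it is also admissible on $B(\xi,2r)$. Lemma~\ref{le2.4} at scale $r$ then gives, for $x\in D_r(\xi)$,
\[
v(x)\le Cv(A_{\xi,r})\left[\frac{\delta(x)}{r}\right]^{\varepsilon_\Omega},
\]
so $\beta_\Omega:=\varepsilon_\Omega\in(0,s)$ works in every case. The enlarged ball $B(\xi,8r)$ is only needed to keep the Carleson and Harnack constants uniform, and to move between $A_{\xi,r}$ and nearby corkscrew points by the interior Harnack inequality.

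\emph{Planar case, $n=2$, $s\in(1/2,1)$.} I would proceed in four steps.

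\begin{enumerate}
\item \textbf{Exterior sector.} By Definition~\ref{d2.1}, at every $\zeta\in\partial\Omega$ the complement $\Omega^c\cap B(\zeta,r_0)$ contains a closed sector $\Sigma_\zeta$ with vertex $\zeta$ and aperture $\omega_L=\pi-2\arctan L>0$, where $\omega_L$ depends only on $L$.

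\item \textbf{Homogeneous profile.} Let $\Phi_\omega(x)=|x|^{\alpha(\omega)}\Theta_\omega(x/|x|)$ be the positive homogeneous $s$-harmonic function in $\mathbb R^2\setminus\Sigma$ that vanishes on $\Sigma$ (Michalik \cite{M06}, Terracini et al.\ \cite{TTV2018}). By \cite{M06}, $\alpha(\omega)$ depends continuously and monotonically on the aperture $\omega$. It tends to $s-\tfrac12$ as $\Sigma$ shrinks to a ray; this uses $2s>1$, so that a ray is not polar. It equals $s$ when $\Sigma$ is a half-plane. Since $0<\omega_L<\pi$, this should give $\alpha_L:=\alpha(\omega_L)\in(s-\tfrac12,s)$ strictly.

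\item \textbf{One-step decay at a boundary point.} For admissible $v$ and $\rho\le 4r$, I aim to show
\[
\sup_{D_{\rho/2}(\zeta)}v\le C\,v(A_{\zeta,\rho})\cdot(\text{decay from }\Phi_{\omega_L}).
\]
The barrier is $w:=M\rho^{-\alpha_L}\Phi_{\omega_L}(\cdot-\zeta)$ plus a correction that absorbs the exterior data. Here $M\sim v(A_{\zeta,\rho})$ by the Carleson estimate, which follows from Lemma~\ref{le2.4} and Harnack. Domain monotonicity ($\Omega^c\supset\Sigma_\zeta$ locally) together with the maximum principle for $(-\Delta)^s$ in $D_\rho(\zeta)$ should give $v\le w$ in $D_{\rho/2}(\zeta)$. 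I would then iterate over dyadic scales $\rho=2^{-k}r$ to obtain
\[
v(x)\le C v(A_{\zeta,r})\left(\frac{|x-\zeta|}{r}\right)^{\alpha_L-\epsilon}\quad\text{for any small }\epsilon>0.
\]

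\item \textbf{From $|x-\zeta|$ to $\delta(x)$.} For $x\in D_r(\xi)$, take $\zeta$ to be a nearest boundary point, so $|x-\zeta|=\delta(x)$. A Harnack chain joins $A_{\zeta,r}$ to $A_{\xi,r}$ with uniformly many balls, which yields \eqref{eq2.4} with $\beta_\Omega:=\alpha_L-\epsilon>s-\tfrac12$ once $\epsilon$ is chosen small.
\end{enumerate}

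\emph{Main obstacle.} The hardest step is the nonlocal comparison in step 3. The function $v$ is not controlled outside $B(\zeta,8r)$, so a pure barrier argument fails; this is also why the iteration may lose an $\epsilon$ of the exponent. I would first try to split $v=v_1+v_2$, where $v_1$ is the $s$-harmonic function in $D_\rho(\zeta)$ with exterior values $v\mathbf 1_{B(\zeta,2\rho)}$, and $v_2$ carries the far part. The far part $v_2$ is bounded using the tail integral $\rho^{2s}\int_{|y-\zeta|>2\rho}v(y)|y-\zeta|^{-2-2s}\,dy$, which should be comparable to $v(A_{\zeta,\rho})$ via the Carleson estimate and admissibility. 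The local part $v_1$ is compared with the truncated profile $\Phi_{\omega_L}\mathbf 1_{B_{2\rho}}$, whose $s$-Laplacian error in $D_\rho$ is $O(\rho^{\alpha_L-2s})$ and is absorbed by a small multiple of the torsion function. Both contributions then decay by a fixed factor $\theta=2^{-\alpha_L+\epsilon}$ per dyadic step, which is exactly the decay needed in step 3.
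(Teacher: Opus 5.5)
Your general case and Steps 1, 2 and 4 match the paper's proof. Outside the planar case the paper takes $\beta_\Omega=\varepsilon_\Omega$ from Lemma~\ref{le2.4}. In the planar case it uses an exterior sector at every boundary point, Michalik's strict monotonicity and aperture limit to place the exponent in $(s-\frac12,s)$, and, at the end, a nearest boundary point with a Harnack chain.

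The routes differ in Step 3. The paper neither builds barriers nor iterates:
\begin{itemize}
\item It dominates $v$ on $\Omega\cap B(\xi,2R)$ by the solution $\widetilde v$ of the Dirichlet problem in the truncated sector $(\xi+\mathcal C_\Theta)\cap B(\xi,2R)$, with exterior data $v\one_{B(\xi,2R)^c}$.
\item It bounds $\widetilde v(A_\Theta)\le Cv(A)$ using the Carleson estimate, the tail bound \eqref{pl:eq:tail}, and a Poisson-kernel bound for the truncated sector.
\item It applies the boundary Harnack principle in the sector to $\widetilde v$ and $U_\Theta(\cdot-\xi)$.
\end{itemize}
This gives the exact exponent $\theta_\Omega$ in one step, and boundary Harnack handles the lateral sides of the sector automatically. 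The price is invoking boundary Harnack on an auxiliary domain. Your route needs only the comparison principle and explicit supersolutions. It costs an arbitrarily small loss in the exponent, which is harmless because only $\beta_\Omega>s-\frac12$ is required.

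Two details of your Step 3 need fixing.

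First, with the exact aperture $\omega_L$, the exterior-data inequality $v\le\lambda\rho^{-\alpha_L}\Phi_{\omega_L}(\cdot-\zeta)$ on $\Omega\cap(B(\zeta,2\rho)\setminus B(\zeta,\rho))$ does not follow from the maximum principle. $\Phi_{\omega_L}$ vanishes on $\partial\Sigma_\zeta$, which may contain pieces of $\partial\Omega$, so this inequality is itself a boundary Harnack statement. Instead, use the profile $\Phi'$ of a narrower exterior sector $\Sigma'$ with $\overline{\Sigma'}\setminus\{\zeta\}$ inside the interior of $\Sigma_\zeta$. Then $\Phi'(\cdot-\zeta)\ge c\rho^{\alpha'}$ on that annulus, and $\alpha'>s-\frac12$ by strict monotonicity. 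This, not the iteration, is where the $\epsilon$ is lost.

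Second, the truncation error has the favorable sign. Set $\Psi:=\Phi'(\cdot-\zeta)\one_{B(\zeta,2\rho)}$. For $x\in D_\rho(\zeta)$ one has $(-\Delta)^s\Psi(x)=c_{(2,s)}\int_{|y-\zeta|>2\rho}\Phi'(y-\zeta)|x-y|^{-2-2s}\,dy\ge c\rho^{\alpha'-2s}$. So $\Psi$ is already a supersolution and nothing needs absorbing. The same computation gives $\tau_\rho:=\mathcal G_{s,D_\rho(\zeta)}1\le C\rho^{2s-\alpha'}\Psi$. Write $\mathrm{Tail}_\rho:=\rho^{2s}\int_{|y-\zeta|>2\rho}v(y)|y-\zeta|^{-2-2s}\,dy$; then the far part satisfies $v_2\le C\rho^{-2s}\mathrm{Tail}_\rho\,\tau_\rho$, which also decays at rate $\alpha'$.

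Consequently the dyadic iteration is unnecessary. Once $\sup_{D_{2\rho}(\zeta)}v+\mathrm{Tail}_\rho\le Cv(A_{\zeta,\rho})$ holds at a single scale $\rho\sim r$, you get $v(x)\le Cv(A_{\zeta,\rho})(|x-\zeta|/\rho)^{\alpha'}$ on all of $D_\rho(\zeta)$.

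Finally, the tail bound comes from the Poisson formula in a corkscrew ball together with $v\ge0$ on $\R^n$, as in \eqref{pl:eq:tail}. The Carleson estimate cannot do this, since it says nothing about $v$ outside $B(\xi,8r)$.
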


\begin{proof}
Except in the planar case with $s\in(1/2,\,1)$, we take $\beta_\Omega$ to be the exponent $\epz_\boz$ from Lemma~\ref{le2.4}. In the planar case, every boundary point
has a uniformly sized exterior sector. The complementary sector
$\mathcal C_\Theta$ can be chosen with half-opening $\Theta\in(\pi/2,\pi)$.
By strict monotonicity and the aperture limit (see, for instance,
\cite[p.~6 and Theorem~3.2]{M06}), we find that
 the $s$-harmonic function has the homogeneous of degree 
\begin{equation}\label{eq2.5}
\theta_\Omega\in\lf(s-\frac12,\,s\r).
\end{equation}

Let $\xi\in\partial\Omega$ and take $R$ such that
$\Omega\cap B(\xi,8R)\subset\xi+\mathcal C_\Theta$. Choose
$A\in\Omega\cap B(\xi,R)$ satisfying that $B(A,cR)\subset\Omega\cap B(\xi,2R)$,
where $c$ is a positive constant depending only on the Lipschitz character. From Lemma \ref{le2.4} and Harnack principle, we deduce that
\begin{equation}\label{pl:eq:carleson}
 \sup_{x\in B(\xi,4R)}v(x)\le C v(A),
\end{equation}
where $v=0$ on the local exterior is understood. By the Poisson formula in an interior ball of radius comparable to $R$ centered
at $A$ (see, for instance, \cite[Chapter 1]{bbkrs09}), we conclude that
\begin{equation}\label{pl:eq:tail}
 R^{2s}\int_{|y-\xi|>4R}
                \frac{v(y)}{|y-\xi|^{2+2s}}\dd y\le C v(A).
\end{equation}

Let $\mathcal D:=(\xi+\mathcal C_\Theta)\cap B(\xi,2R)$, $G_{s,\mathcal D}$ be Green function on $\mathcal D$, and $\widetilde v$ its
harmonic extension of the data $v\one_{B(\xi,2R)^c}$, with zero
values on the rest of $\mathcal D^c$. Comparison yields
$v\le\widetilde v$ in $\Omega\cap B(\xi,2R)$.
Assume that $A_\Theta$ is a fixed sector reference point at distance comparable
to $R$ from both the vertex and the boundary of $\mathcal D$. From the Poisson kernel representation in $\mathcal D$ (see, for instance, \cite[Chapter 1]{bbkrs09}) and the facts that 
 $G_{s,\mathcal D}\le G_{s,B(\xi,2R)}$ and
$\int_{\mathcal D} G_{s,\mathcal D}(A_\Theta,z)\dd z\le CR^{2s}$, it follows
that, for any $y$ satisfying $|y-\xi|>4R$, the Poisson
kernel $P_{\mathcal D}$ of $\mathcal D$ satisfies
\[
 P_{\mathcal D}(A_\Theta,y)= C\int_{\mathcal D} G_{s,\mathcal D}(A_\Theta,z)|z-y|^{-2-2s}\dd z
 \le C R^{2s}|y-\xi|^{-2-2s}.
\]
By this, \eqref{pl:eq:tail}, and the Poisson kernel representation, we conclude that
\[
 \widetilde v(A_\Theta)\le C v(A).
\]
The positive $s$-harmonic function $U_\Theta$ on the sector has the homogeneous of degree $\theta_\Omega$.
Boundary Harnack principle from Lemma \ref{thm:bhp}, applied to $\widetilde v$ and
$U_\Theta(\cdot-\xi)$ in the sector, yields
\begin{equation}\label{eq:seccompa}
 v(x)\le\widetilde v(x)
 \le C v(A)R^{-\theta_\Omega}U_\Theta(x-\xi)
 \le C v(A)(|x-\xi|/R)^{\theta_\Omega}.
\end{equation}
To obtain the estimate on $D_r(\xi)$, let $\zeta$ be a nearest
boundary point to $x\in D_r(\xi)$ and apply the preceding argument
at $\zeta$ with $R=r/4$. Since $|\zeta-\xi|\le2r$, all the
required patches lie in $B(\xi,8r)$. From \eqref{pl:eq:carleson}, it
follows that $v(A_{\zeta,R})\le Cv(A_{\xi,r})$. If
$\delta(x)\in(0,\,c_\Omega r)$, where $c_\Omega\in(0,\,1)$ sufficiently small, then,
by \eqref{eq:seccompa} and taking $\bz_\boz=\theta_\Omega$,  we find that \eqref{eq2.4}
holds. If $\delta(x)\ge c_\Omega r$, then the same
estimate follows directly from \eqref{pl:eq:carleson}. This completes
the proof of Lemma \ref{le2.9}.
\end{proof}

\begin{remark}
Compared to Lemma \ref{le2.4}, only the upper estimate is changed
in Lemma \ref{le2.9} and the lower estimate in
Lemma~\ref{le2.4} retains its original exponent. In dimensions
$n\ge3$, the corresponding cone exponent tends to zero as the
exterior cone narrows (see, for instance, \cite[Theorem~3.2]{M06}). Thus a
fixed exterior cone still gives a domain-dependent gain, but does
not raise the universal baseline in \eqref{eq:Pt}.
\end{remark}

\subsection{Annular kernel estimates and the dual argument}

The next lemma converts the boundary reverse H\"older estimate into an annular estimate.
Define the positive kernel, for any $x,\,y\in\Omega$,
\[
 \mathcal K_s(x,y):=\frac{G_s(x,y)}{\delta(x)^s}
\]
and extend it by zero when either variable is outside $\Omega$.

\begin{lemma}[Uniform annular estimate]\label{le2.6}
Let $n\ge2$, $s\in(0,1)$, and $P\in(2,\,\infty)$. Assume that \eqref{eq1.4} holds with exponent $P$. Then,
for any $y\in\R^n$ and $r\in(0,\,\infty)$,
\begin{equation}\label{e2.3}
 \left(\int_{\{r<|x-y|\leq2r\}}[\mathcal K_s(x,y)]^P\,dx\right)^{1/P}
 \leq C r^{s-n+n/P}
\end{equation}
where $C$ depends only on $n,s,P,\Omega$ and the constant in \eqref{eq1.4}.
\end{lemma}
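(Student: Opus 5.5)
We prove \eqref{e2.3} by splitting the annulus $\mathcal A_r(y):=\{r<|x-y|\le2r\}$ according to the size of $\delta(x)$ relative to $r$. We also split by whether $y$ is well inside $\Omega$ or close to the boundary at scale $r$. Only $y\in\Omega$ matters, since otherwise $\mathcal K_s(\cdot,y)\equiv0$. Large scales are trivial: if $r\gtrsim D$ the annulus meets $\Omega$ only when $r\le D$. So we may assume $r\le c\,r_0$, and larger $r$ (up to $D$) are handled by finitely many applications of the small-scale bound with constants depending on $\Omega$.

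\emph{Interior part.} On the part of $\mathcal A_r(y)\cap\Omega$ where $\delta(x)\ge c_1 r$, Lemma~\ref{le2.5} gives $\mathcal K_s(x,y)\le C r^{-s}|x-y|^{2s-n}\le Cr^{s-n}$. Integrating the $P$-th power over a set of measure $\lesssim r^n$ gives the bound $Cr^{s-n+n/P}$ directly.

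\emph{Boundary part.} Cover $\{x\in\mathcal A_r(y)\cap\Omega:\delta(x)<c_1r\}$ by boundedly many (depending on $n$, $L$) patches $D_{\rho/8}(\xi_j)$. Here $\xi_j\in\partial\Omega$ and $\rho=c_2r$, with $c_1\ll c_2\ll1$ chosen so that $|y-\xi_j|\ge r/2>2\rho$ and $y\notin B(\xi_j,\rho)$. Then $u:=G_s(\cdot,y)$ is admissible on $B(\xi_j,\rho)$. It is nonnegative, lies in $L^1(w_s)$ because it is integrable and compactly supported, vanishes outside $\Omega$, and is $s$-harmonic in $D_\rho(\xi_j)$ away from the pole. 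It is continuous on the ball by interior regularity and boundary H\"older decay (Lemma~\ref{le2.4}). Applying \eqref{eq1.4} with exponent $P$ gives
\[
\Bigl(\int_{D_{\rho/8}(\xi_j)}\mathcal K_s(x,y)^P dx\Bigr)^{1/P}\le C\rho^{n/P-n}\int_{D_{\rho/8}(\xi_j)}\frac{G_s(x,y)}{\delta(x)^s}\,dx.
\]

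\emph{Main obstacle.} We must show that the $L^1$ average on the right is $\lesssim \rho^{n}\, r^{s-n}$. First, $G_s(A_{\xi_j,\rho},y)\le Cr^{2s-n}$ by Lemma~\ref{le2.5}. Next we bound $\int_{D_{\rho/8}}u\,\delta^{-s}dx$. Use the coarea formula over the level sets $\{\delta=h\}$, noting that $|\nabla\delta|=1$ a.e. and that the level sets have $\mathcal H^{n-1}$-measure $\lesssim\rho^{n-1}$ in the patch. On $S_h(\xi_j,\rho)$ with $h<c_0\rho$, Cauchy--Schwarz and Lemma~\ref{thm:square} give $\fint_{S_h} u\,h^{-s}\lesssim G_s(A_{\xi_j,\rho},y)\rho^{-s}$. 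Hence
\[
\int_{D_{\rho/8}}\frac{u}{\delta^s}\lesssim\rho^{n-1}\int_0^{c\rho}G_s(A,y)\rho^{-s}\,dh\lesssim\rho^{n-s}r^{2s-n}.
\]
For $h\ge c_0\rho$, the same bound follows from the Harnack chain estimate \eqref{pl:eq:carleson}. If invoking the square estimate turns out awkward, a fallback is Lemma~\ref{le2.4}: $u\le Cu(A)(\delta/\rho)^{\varepsilon_\Omega}$ together with integrability of $\delta^{\varepsilon_\Omega-s}$, since $s-\varepsilon_\Omega<1$. This also gives $\int u\delta^{-s}\lesssim u(A)\rho^{n-s}$.

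Combining, each patch contributes $\lesssim\rho^{n/P-n}\rho^{n-s}r^{2s-n}\sim r^{s-n+n/P}$. Summing over boundedly many patches proves \eqref{e2.3}. One extra case remains: $y$ within $\lesssim r$ of $\partial\Omega$ but with the annulus near $\partial\Omega$ far from $y$ in the sense used above. This is automatic, since $x$ in the annulus satisfies $|x-y|>r$, so shrinking $c_2$ keeps the pole outside each patch ball.
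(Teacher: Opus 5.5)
Your proof is correct and has the same skeleton as the paper's. You discard $y\notin\Omega$ and $r\ge D$, and bound the part of the annulus with $\delta\gtrsim r$ by Lemma~\ref{le2.5}. You then cover the rest by boundedly many boundary patches of radius comparable to $r$ that exclude the pole, apply \eqref{eq1.4} on each, and reduce to
\[
 \int_{D_{\rho/8}(\xi_j)}G_s(x,y)\,\delta(x)^{-s}\,dx\lesssim\rho^{n-s}r^{2s-n}.
\]
The only real difference is how you prove this $L^1$ bound.

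\textbf{Your route versus the paper's.} Your primary route is coarea, Cauchy--Schwarz on each level set, and Lemma~\ref{thm:square} with pole $x_*=y$. It is valid: $|y-\xi_j|\ge r/2>2\rho$, and the constants in Lemma~\ref{thm:square} are uniform in the pole. The paper instead uses what you list as the fallback. It takes the pointwise bound $\mathcal K_s(x,y)\le Cr^{2s-n}\rho^{-s}(\delta(x)/\rho)^{\varepsilon_\Omega-s}$ from Lemmas~\ref{le2.5} and~\ref{le2.4}, and integrates it via coarea using $s-\varepsilon_\Omega<1$. That is the more economical choice. An $L^1$ bound only needs integrability of the pointwise boundary profile. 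The square estimate is what is needed for moments above $2$, as in the proof of Proposition~\ref{prop:RH}. Your primary route uses that deeper input to obtain an $L^2$ control on each level set that the lemma does not need.

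\textbf{Two small corrections.}

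\emph{Large scales.} Handling $cr_0\le r<D$ by ``finitely many applications of the small-scale bound'' does not work as stated. An annulus $\{r<|x-y|\le2r\}$ with $r\sim D$ is not a union of annuli about $y$ at scales at most $cr_0$. The paper avoids this by taking $\rho=\theta r$ with $8\theta D<r_0$ from the start. Then the same patch argument runs for every $r<D$, with at most $C\theta^{-n}$ patches, a number depending only on $n$ and $\Omega$. Your construction does the same once $c_2$ may depend on $D/r_0$.

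\emph{Continuity.} Deducing continuity of $G_s(\cdot,y)$ on the patch from Lemma~\ref{le2.4} is circular, because admissibility, including continuity, is a hypothesis of that lemma. Continuity of the Green function away from the pole should instead be quoted from the standard Green-function theory on Lipschitz domains. That is what the paper implicitly relies on when it asserts admissibility.
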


\begin{proof}
For $y\notin\Omega$ the assertion is trivial. We only need
to consider the case $y\in\Omega$. Let $D:=\diam(\Omega)$.
If $y\in\Omega$ and $r\geq D$, the annulus has empty intersection with $\Omega$.
Choose a fixed $\theta\in(0,\,1/100)$ so small that $8\theta D\in(0,\,r_0)$,
where $r_0$ is the localization radius in Definition~\ref{d2.1}.

For any $r\in(0,\,D)$, let $\rho:=\theta r$ and
$\mathcal A(y,r):=\Omega\cap\{r<|x-y|\leq2r\}$.
From Lemma~\ref{le2.5}, we deduce that, for any $x\in\mathcal A(y,r)$ with $\delta(x)\ge\rho/8$,
\begin{equation*}
 \mathcal K_s(x,y)\le C  r^{2s-n}\rho^{-s}\le C  r^{s-n},
\end{equation*}
Consequently,
\begin{equation}\label{e2.4}
 \|\mathcal K_s(\cdot,y)\|_{L^P(\mathcal A(y,r)\cap\{\delta(\cdot)\geq\rho/8\})}
 \le C  r^{s-n}|\mathcal A(y,r)|^{1/P}
 \le C  r^{s-n+n/P}.
\end{equation}

To cover the remaining part $\{x:\delta (x)<\rho/8\}\cap\mathcal A(y,r)$, consider
\[
 \Gamma=\{\xi\in\partial\Omega:r/2\leq|\xi-y|\leq3r\}.
\]
Take a maximal $\rho/4$-separated family $\{\xi_j\}_{j=1}^N$ in
$\Gamma$. Disjointness of the balls $B(\xi_j,\rho/8)$ and volume
comparison in $B(y,4r)$ imply
$N\le C(r/\rho)^n=C\theta^{-n}$.
If $x\in \mathcal A(y,r)$ and $\delta(x)\in(0,\,\rho/8)$, then a nearest boundary point
$\xi$ belongs to $\Gamma$ and is within $\rho/4$ of some $\xi_j$.
By this, we find that $x\in B(\xi_j,\rho)$.
Applying the pointwise estimate of the Green function in Lemma \ref{le2.5} and the boundary H\"older theorem from Lemma \ref{le2.4} in $B(\xi_j,2\rho)$,
we obtain that, for any $x\in B(\xi_j,\rho)\cap\Omega$,
\begin{equation*}
 \mathcal K_s(x,y)\le C  r^{2s-n}\rho^{-s}
               (\delta(x)/\rho)^{\varepsilon_\Omega-s}.
\end{equation*}
From this, the coarea formula, and the fact that $s-\varepsilon_\Omega<1$, it follows that
\begin{align}\label{eq2.6}
\int_{B(\xi_j,\rho)\cap\Omega}\mathcal K_s(x,y)\,dx
&\le C  \int_{B(\xi_j,\rho)\cap\Omega} r^{2s-n}\rho^{-s}
               (\delta(x)/\rho)^{\varepsilon_\Omega-s}\,dx\\\nonumber
&\le C  r^{2s-n}\rho^{n-1-s}\int_{0}^{\rho}(h/\rho)^{\varepsilon_\Omega-s}\,dh
\le C  r^{2s-n}\rho^{n-s}.
\end{align}
Since $|\xi_j-y|\geq r/2>8\rho$ and $8\rho<r_0$,
$G_s(\cdot,y)$ is admissible on $B(\xi_j,8\rho)$.
By \eqref{eq1.4} with radius $8\rho$ and \eqref{eq2.6}, we conclude that
\begin{align*}
\int_{B(\xi_j,\rho)\cap\Omega}[\mathcal K_s(x,y)]^P\,dx
&\le C  \rho^{n-nP}\left[\int_{B(\xi_j,\rho)\cap\Omega}\mathcal K_s(x,y)\,dx\right]^P\\\nonumber
&\le C  \rho^{n-nP}r^{P(2s-n)}\rho^{P(n-s)}
\le C  r^{P(2s-n)}\rho^{n-Ps}.
\end{align*}

Summing these bounds over the $N$ balls, and recalling
$\rho=\theta r$, yields
\[
 \int_{\mathcal A(y,r)\cap\{\delta(\cdot)<\rho/8\}}[\mathcal K_s(x,y)]^P\,dx
 \le C  r^{(s-n)P+n}.
\]
This, together with \eqref{e2.4}, implies that 
\eqref{e2.3} holds. This completes the proof of Lemma \ref{le2.6}.
\end{proof}

We next give an abstract consequence of an annular moment bound.
It will be applied to the Green quotients of orders $s$ and $t$.

\begin{lemma}[A bound from annular moments]\label{le2.7}
Let $n\ge2$, $\alpha\in(0,\,n)$, and $P\in(1,\infty)$. Suppose that a nonnegative
measurable kernel $K(\cdot,\,\cdot)$ on $\R^n\times\R^n$ satisfies that, for any
$y\in\R^n$ and $r\in(0,\,\fz)$,
\begin{equation}\label{eq:abstractannular}
 \|K(\cdot,y)\|_{L^P(\{r<|x-y|\le2r\})}
 \le C_Kr^{\alpha-n+n/P}.
\end{equation}
Here $C_K$ is a positive constant independent of $r$.
If $q\in(1,\,n/\alpha)$, $p\in(1,\,P)$, and $1/p=1/q-\alpha/n$, then, for any $f\in L^q(\R^n)$,
\begin{equation}\label{eq2.8}
 \left\|\int_{\R^n}K(\cdot,y)f(y)\,dy\right\|_{L^p(\R^n)}
 \le C\|f\|_{L^q(\R^n)},
\end{equation}
where the positive constant $C$ depends only on $n,\,\alpha,\,P,\,p,\,q$, and $C_K$.
\end{lemma}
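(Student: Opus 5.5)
We prove Lemma~\ref{le2.7} by splitting the operator $Tf(x):=\int_{\R^n}K(x,y)f(y)\,dy$ dyadically in $|x-y|$ and estimating each piece by duality. This produces a pointwise bound by the Hardy--Littlewood maximal function on a power scale, which we then combine with a Hedberg-type optimization.

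Since $K\ge0$, we may assume $f\ge0$. Write $T=\sum_{k\in\Z}T_k$, where $T_k$ is the part of $T$ with kernel $K_k(x,y):=K(x,y)\one_{\{2^k<|x-y|\le2^{k+1}\}}$. Fix the auxiliary exponent $P$ and a nonnegative $g\in L^{p'}$. For each $k$ we have
\[
\langle T_kf,g\rangle=\int f(y)\int_{\{2^k<|x-y|\le 2^{k+1}\}}K(x,y)g(x)\,dx\,dy .
\]
Applying H\"older in $x$ with exponents $P,P'$ and using \eqref{eq:abstractannular} gives the inner-integral bound
\[
\int_{\{2^k<|x-y|\le 2^{k+1}\}}K(x,y)g(x)\,dx\le C_K2^{k(\alpha-n+n/P)}\|g\|_{L^{P'}(B(y,2^{k+1}))}\le C2^{k\alpha}\big[M(g^{P'})(y)\big]^{1/P'} .
\]
Here $M$ is the Hardy--Littlewood maximal operator, and the factor $2^{kn/P'}$ coming from the ball volume cancels with $2^{k(n/P-n)}\cdot 2^{kn}$.

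The dyadic sum $\sum_k 2^{k\alpha}$ diverges at both ends, so we cannot simply sum these bounds. Instead we use the dual (transpose) operator $T^*g(y)=\int K(x,y)g(x)\,dx$. We obtain the two-sided Hedberg estimate by combining two kinds of bound:
\begin{itemize}
\item For small scales $2^k\le R$, the estimate above gives a contribution of at most $CR^\alpha[M(g^{P'})(y)]^{1/P'}$.
\item For large scales $2^k>R$, H\"older over the whole annulus gives
\[
\int_{\text{annulus}}Kg\le C_K2^{k(\alpha-n+n/P)}\|g\|_{L^{p'}}\,2^{kn(1/P'-1/p')} ,
\]
whose exponent is $\alpha-n/p'+\ldots$. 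Collecting terms, this is $2^{k(\alpha-n/p')}\|g\|_{p'}$. This exponent equals $-n/q'<0$, because $1/p'=1-1/q+\alpha/n$, so the large-scale sum converges to $CR^{-n/q'}\|g\|_{p'}$.
\end{itemize}
Here H\"older from $L^{P'}$ to $L^{p'}$ on the annulus is legitimate because $p<P$ implies $P'<p'$.

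Optimizing in $R$ gives
\[
T^*g(y)\le C\big[M(g^{P'})(y)\big]^{\theta/P'}\|g\|_{p'}^{1-\theta},\qquad \theta=\frac{n/q'}{\alpha+n/q'}.
\]
Then $\|T^*g\|_{L^{q'}}\le C\|g\|_{p'}^{1-\theta}\,\|M(g^{P'})^{1/P'}\|_{L^{\theta q'}}^{\theta}$. A short computation shows $\theta q'=p'$: indeed $\theta q'=n/(\alpha+n/q')$ and $1/p'=\alpha/n+1/q'$. Since $p'>P'$, the maximal theorem on $L^{p'/P'}$ gives $\|M(g^{P'})^{1/P'}\|_{p'}\le C\|g\|_{p'}$. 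Hence $\|T^*g\|_{q'}\le C\|g\|_{p'}$, and duality yields \eqref{eq2.8}. The condition $q<n/\alpha$ is used only to ensure that $p$ is finite.

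The main obstacle is the large-scale sum, specifically checking that the exponent $\alpha-n/p'=-n/q'$ is strictly negative and that the interpolation exponent matches $p'$ exactly. The strict inequality $p<P$ is needed precisely so that the maximal operator acts on $L^{p'/P'}$ with $p'/P'>1$. We also need to justify the interchange of integrals, which holds by Tonelli since all functions involved are nonnegative.
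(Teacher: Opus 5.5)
Your proposal is correct and is essentially the paper's proof. Both pass to the transpose $T^*g(y)=\int K(x,y)g(x)\,dx$ and split dyadically: scales $2^k\le R$ give $R^\alpha(\mathcal M(|g|^{P'}))^{1/P'}(y)$, and scales $2^k>R$ give $R^{\alpha-n/p'}\|g\|_{L^{p'}}$ with $\alpha-n/p'=-n/q'<0$. Both then optimize in $R$ (your identity $\theta q'=p'$ is the paper's $(1-\alpha p'/n)q'=p'$) and close with the maximal theorem on $L^{p'/P'}$ (using $P'<p'$) and duality. The one small point to make explicit is that $R$ is chosen at points where $0<\mathcal M(|g|^{P'})(y)<\infty$, which holds almost everywhere when $g\not\equiv0$.
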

\begin{proof}
Let $r_*=P'$. Then
$
 1<r_*<p'<\frac n\alpha
$
and
$
\frac1{q'}=\frac1{p'}-\frac\alpha n
$.
For a function $g\in L^{p'}(\R^n)$, define
\[
 T^*g(y):=\int_{\R^n}K(x,y)g(x)\,dx
\]
and
\[
 \mathcal M_{r_*}g:=(\mathcal M(|g|^{r_*}))^{1/r_*},
\]
where
\[
 \mathcal Mg(y):=\sup_{R>0}\fint_{B(y,R)}|g(x)|\,dx
\]
is the centered Hardy--Littlewood maximal operator. 
From H\"older's inequality and \eqref{eq:abstractannular}, we deduce that
\begin{equation*}
 T^*g(y)\le C\sum_{j\in\Z}2^{j\alpha}
 \left(\fint_{B(y,2^{j+1})}|g(x)|^{r_*}\,dx\right)^{1/r_*}.
\end{equation*}
For $R>0$, the sum over $2^j\le R$ is at most
$CR^\alpha\mathcal M_{r_*}g(y)$. Since $r_*<p'$, the remaining
averages satisfy
\[
 \left(\fint_{B(y,2^{j+1})}|g(x)|^{r_*}\,dx\right)^{1/r_*}
 \le C2^{-jn/p'}\|g\|_{L^{p'}(\R^n)}.
\]
By this and the fact that $\alpha-n/p'<0$, we conclude that
\begin{equation*}
 T^*g(y)\le C\left(R^\alpha\mathcal M_{r_*}g(y)
          +R^{\alpha-n/p'}\|g\|_{L^{p'}(\R^n)}\right).
\end{equation*}
If $g\not\equiv0$, choose
$R=(\|g\|_{L^{p'}}/\mathcal M_{r_*}g(y))^{p'/n}$ at almost every
point. Then
\begin{equation*}
 T^*g(y)\le C\|g\|_{L^{p'}(\R^n)}^{\alpha p'/n}
             (\mathcal M_{r_*}g(y))^{1-\alpha p'/n}.
\end{equation*}
This, together with the boundedness of the Hardy--Littlewood maximal operator
(see, for instance, \cite[Theorem~2.1.6]{gra14}) and the fact that $(1-\alpha p'/n)q'=p'$, 
implies that
\begin{equation*}
 \|T^*g\|_{L^{q'}(\R^n)}\le C\|g\|_{L^{p'}(\R^n)}.
\end{equation*}
Tonelli's theorem and duality now give \eqref{eq2.8}.
This finishes the proof of Lemma~\ref{le2.7}.
\end{proof}

\section{From the boundary quotient to global Sobolev regularity}\label{s3}
Throughout this section, let $t\in[s,\,\min\{s+1/2,2s\})$,
$\alpha:=2s-t$, and $D:=\diam(\Omega)$. Constants may depend on
$n,s,t,p,q$ and the fixed Lipschitz geometry, but not on the
Whitney scale or on the functions under consideration.

\subsection{Local potential and harmonic estimates}
For a cube $Q$ at scale $\ell$ and a function
$v$ on $Q$, introduce the scaled quantity
\[
 N_{t,p,\ell}(v;Q)=
 \begin{cases}
 \ell^{-t}\|v\|_{L^p(Q)}+[v]_{W^{t,p}(Q)},&t\in(0,\,1),\\
 \ell^{-1}\|v\|_{L^p(Q)}+\|\nabla v\|_{L^p(Q)},&t=1,\\
 \ell^{-t}\|v\|_{L^p(Q)}+\ell^{1-t}\|\nabla v\|_{L^p(Q)}
       +[\nabla v]_{W^{t-1,p}(Q)},&t\in(1,\,2).
 \end{cases}
\]
All terms have the scaling of a derivative of order $t$.
Let $\Delta_hv(x):=v(x+h)-v(x)$ be the first differences.

\begin{lemma}\label{le3.1}
Let $1<q<p<\infty$ and $0<\gamma=n(1/q-1/p)\le2s-t$.
Then, for any concentric cubes $Q^1$ and $Q^2$ satisfying that $Q^1\Subset Q^2$ with fixed size ratios and
side lengths comparable to $\ell$, and $F$ supported in $Q^2$,
\begin{equation}\label{eq3.1}
 N_{t,p,\ell}(I_{2s}F;Q^1)
 +\ell^{-t}\|I_{2s}F\|_{L^p(Q^2)}
 \le C\ell^{2s-t-\gamma}\|F\|_{L^q(Q^2)},
\end{equation}
where the constant $C$ depends only on $n,s,t,p,q$ and the fixed cube ratios.
\end{lemma}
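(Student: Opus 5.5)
We first remove the scale $\ell$. Put $F_\ell(x):=F(x_0+\ell x)$, where $x_0$ is the common centre of the cubes. Then $I_{2s}F(x_0+\ell x)=\ell^{2s}I_{2s}F_\ell(x)$ and $\|F\|_{L^q(Q^2)}=\ell^{n/q}\|F_\ell\|_{L^q}$. Each term of $N_{t,p,\ell}$ and $\ell^{-t}\|\cdot\|_{L^p(Q^2)}$ scales like $\ell^{n/p-t}$ times the corresponding unit-scale quantity. Since $\gamma=n(1/q-1/p)$, the powers match:
\[
2s+n/p-t-n/q=2s-t-\gamma.
\]
So it suffices to treat $\ell=1$, with $Q^1\Subset Q^2$ fixed cubes of unit size and $\operatorname{supp}F\subset Q^2$.

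\emph{The $L^p$ term.} For $x\in Q^2$ only $|x-y|\le\operatorname{diam}Q^2$ matters, so $|I_{2s}F|\le C(|\cdot|^{2s-n}\one_{B_c})*|F|$. Choose $r$ with $1+1/p=1/r+1/q$, that is, $1/r=1-\gamma/n$. Since $\gamma\le 2s-t<2s$, we have $(n-2s)r<n$, so the truncated kernel lies in $L^r$. Young's inequality then gives $\|I_{2s}F\|_{L^p(Q^2)}\le C\|F\|_{L^q}$. The same bound on the slightly larger cube $Q^{3/2}$, sitting between $Q^1$ and $Q^2$, is obtained identically.

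\emph{The smoothness terms.} The plan is to place a localized version of $I_{2s}F$ in a Bessel potential space and then apply a sharp embedding. Let $\chi\in C_{\rm c}^\infty(Q^{3/2})$ with $\chi=1$ near $Q^1$, and set $w:=\chi I_{2s}F$. I will show $w\in H^{2s,q}(\R^n)$ with $\|w\|_{H^{2s,q}}\le C\|F\|_{L^q}$. Since $(-\Delta)^s I_{2s}F=F$, the idea is to write
\[
(-\Delta)^s w=\chi F+[(-\Delta)^s,\chi]I_{2s}F.
\]
The commutator has kernel
\[
(\chi(x)-\chi(y))|x-y|^{-n-2s}.
\]
Its local part is of order $2s-1$ and is bounded on $L^q$ of compact sets. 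Its far part is smooth in $x$. Since $I_{2s}F$ is controlled in $L^q_{\rm loc}$ with a decaying tail for compactly supported $F$, the commutator term lies in $L^q$. Together with $w\in L^q$ this gives $w\in H^{2s,q}$. An alternative route is to split the Riesz kernel as the Bessel kernel $G_{2s}$ plus a remainder that is smooth on compact sets, and estimate the remainder directly.

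\emph{The embedding.}
\begin{itemize}
\item If $t\notin\mathbb Z$: the Jawerth--Franke embedding gives $H^{2s,q}=F^{2s}_{q,2}\subset B^{t}_{p,q}$ for $q<p$ and $2s-n/q=t-n/p$, which is exactly the case $\gamma=2s-t$. Since $q<p$, also $B^t_{p,q}\subset B^t_{p,p}=W^{t,p}$. For $\gamma<2s-t$, first embed $H^{2s,q}\subset H^{2s',q}$ with $2s'-t=\gamma$.
\item If $t=1$: use instead the Sobolev embedding $H^{2s,q}\subset H^{1,p}=W^{1,p}$.
\end{itemize}
In either case $N_{t,p,1}(I_{2s}F;Q^1)\le C\|w\|_{H^{2s,q}}$, and the constant depends only on $n,s,t,p,q$ and the fixed cube ratios.

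I expect the main obstacle to be the endpoint $\gamma=2s-t$ when $p<2$. There the naive route, $I_{2s}F=I_t(I_{2s-t}F)$ with Hardy--Littlewood--Sobolev followed by $H^{t,p}\subset W^{t,p}$, fails because $H^{t,p}\not\subset B^t_{p,p}$ for $p<2$. The Jawerth--Franke embedding, which uses $q<p$, is what closes this case, so I will verify carefully that it applies to the localized function $w$ on $\R^n$.
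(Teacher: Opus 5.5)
Your route is genuinely different from the paper's and is viable overall: rescale, prove $\chi I_{2s}F\in H^{2s,q}$, then apply Jawerth--Franke $F^{2s}_{q,2}\hookrightarrow B^t_{p,q}\hookrightarrow B^t_{p,p}$, or $H^{2s,q}\hookrightarrow H^{1,p}$ at $t=1$. However, the step giving $\|\chi I_{2s}F\|_{H^{2s,q}}\le C\|F\|_{L^q}$ fails as written for every $s\in(1/2,1)$, which includes all cases with $t\ge1$.

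You assert that the local part of $[(-\Delta)^s,\chi]$, with kernel $(\chi(x)-\chi(y))|x-y|^{-n-2s}$, is of order $2s-1$ and bounded on $L^q$. That holds for $s<1/2$, where the kernel is $O(|x-y|^{1-n-2s})$ and integrable, and for $s=1/2$, where it is a Calder\'on commutator. For $s>1/2$, an operator of positive order $2s-1$ is not bounded on $L^q$. You would then need an a priori $H^{2s-1,q}_{\rm loc}$ bound for $I_{2s}F$, which is of the same nature as what you are proving.

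One repair is to peel off the first-order Taylor term $\nabla\chi(x)\cdot(x-y)$. The remainder kernel is $O(|x-y|^{2-n-2s})$, integrable since $s<1$. The odd principal-value part, composed with $I_{2s}$, has order $-1$ and a kernel that is $O(|x-y|^{1-n})$ locally. On compact sets it is therefore bounded by $C\,I_1|F|$ up to a harmless bounded term, hence lies in $L^q_{\rm loc}$.

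Your fallback also needs correcting. $K_{2s}-G_{2s}$ is not smooth on compact sets: its symbol $|\xi|^{-2s}-(1+|\xi|^2)^{-s}$ decays only like $s|\xi|^{-2s-2}$, so the kernel behaves like $|x|^{2s+2-n}$ at the origin when $n>2s+2$. What is true, and sufficient, is that a compactly supported truncation of it has a symbol of order $-2s-2$, and so maps $L^q$ into $H^{2s,q}$.

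With that fixed, here is how your route compares with the paper's. The paper never localizes; it works directly with the global seminorm $[\partial^jI_{2s}F]_{W^{\tau,p}(\R^n)}$, where $j=0,1$ and $\tau=t-j$.
\begin{itemize}
\item By homogeneity, $\|\Delta_h\partial^jK_{2s}\|_{L^{r_i}}\le C|h|^{\sigma_i}$ for two data exponents $q_0<q<q_1$ with $\sigma_0<\tau<\sigma_1$.
\item Young's inequality and the $K$-functional give $[\partial^jI_{2s}F]_{W^{\tau,p}}\lesssim\|F\|_{(L^{q_0},L^{q_1})_{\theta,p}}=\|F\|_{L^{q,p}}$.
\item Finally $L^q\hookrightarrow L^{q,p}$ because $q<p$.
\item The subcritical case and the lower-order terms use a truncated kernel and Young; the case $t=1$ uses Hardy--Littlewood--Sobolev.
\end{itemize}
So the paper gives a self-contained proof of exactly the endpoint you import from Jawerth--Franke. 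The gain from $q<p$ appears as $L^q\subset L^{q,p}$ instead of $B^t_{p,q}\subset B^t_{p,p}$. Your correct observation that the route through $H^{t,p}$ breaks for $p<2$ is precisely what this interpolation step circumvents.

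Your approach is shorter if one accepts the Triebel--Lizorkin machinery. But working in inhomogeneous spaces costs you the cutoff/commutator step, which the paper's global, kernel-level argument avoids entirely.
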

\begin{proof}
Let $K_{2s}(x)=c(n,s)|x|^{2s-n}$  be the normalized Riesz
kernel. Then $(-\Delta)^sI_{2s}F=F$.
We first consider the case $t\ne1$. 
Let $j=0$ if $t\in(0,\,1)$ and $j=1$ if $t\in(1,\,2)$,
and set $\tau:=t-j\in(0,1)$. Then each component of $\partial^jK_{2s}$
is homogeneous of degree $2s-j-n$, where $\partial^j$ denotes the
identity operator if $j=0$ and the first-order differential operator
if $j=1$.

On the critical line $\gamma=2s-t$, choose $q_0$ and $q_1$ satisfy
$1<q_0<q<q_1<p$ and sufficiently close to $q$. Define $r_i$ and $\sigma_i$
by $1+1/p=1/q_i+1/r_i$ and $\sigma_i=2s-j-n(1/q_i-1/p)$.
Then
\[
0<\sigma_0<\tau<\sigma_1<1.
\]
From scaling, local integrability at the two singularities, and decay
at infinity, it follows that
\[
 \|\Delta_h\partial^jK_{2s}\|_{L^{r_i}(\R^n)}
 \le C|h|^{\sigma_i},\qquad i=0,1.
\]
Then, by rotation invariance, we find that the positive constant $C$ is
independent of the direction of $h$.
For a decomposition $F=F_0+F_1$ with $F_0\in L^{q_0}(\mathbb{R}^n)$
and $F_1\in L^{q_1}(\mathbb{R}^n)$, Young's inequality yields
\begin{equation}\label{eq3.2}
 \|\Delta_h\partial^jI_{2s}F\|_{L^p(\mathbb{R}^n)}
 \le C\bigl(|h|^{\sigma_0}\|F_0\|_{L^{q_0}(\mathbb{R}^n)}
              +|h|^{\sigma_1}\|F_1\|_{L^{q_1}(\mathbb{R}^n)}\bigr).
\end{equation}

For the fixed pair $(L^{q_0}(\R^n),L^{q_1}(\R^n))$ and $\kappa\in(0,\,\fz)$, define
the real-interpolation functional
\[
 \mathfrak K(\kappa,f;L^{q_0},L^{q_1}):=\inf_{f=f_0+f_1}
          \lf[\|f_0\|_{L^{q_0}(\R^n)}+\kappa\|f_1\|_{L^{q_1}(\R^n)}\r].
\]
Here, in taking the infimum, $F_0$ and $F_1$ may be restricted to the fixed support
of $f$. Thus all potentials in this calculation
are locally well defined. Then
\eqref{eq3.2} yields
\begin{equation*}
 \|\Delta_h\partial^jI_{2s}F\|_{L^p(\R^n)}
 \lesssim |h|^{\sigma_0}\mathfrak K(|h|^{\sigma_1-\sigma_0},F;L^{q_0},L^{q_1}).
\end{equation*}
Let $d:=\sigma_1-\sigma_0$ and $\theta:=(\tau-\sigma_0)/d$.
By taking polar integration in $h$, we find that
\begin{align}\label{eq3.5}
 [\partial^jI_{2s}F]_{W^{\tau,p}(\R^n)}^p
 &\lesssim \int_0^\infty
       \rho^{(\sigma_0-\tau)p}\lf[\mathfrak K(\rho^d,F;L^{q_0},L^{q_1})\r]^p\,\frac{d\rho}{\rho}
       \\\nonumber
 &=\frac 1d\int_0^\infty
       \lf[t^{-\theta}\mathfrak K(t,F;L^{q_0},L^{q_1})\r]^p\,\frac{d t}{t}.
\end{align}

Recall that, for a function $F$ on $\mathbb{R}^n$, the distribution function
and the decreasing rearrangement of $F$ are defined by
\[
\mu_F(a):=|\{x\in\R^n:|F(x)|>a\}|,\quad a>0
\]
and
\[
 F^*(t):=\inf\{a>0:\mu_F(a)\leq t\},\quad t>0,
\]
respectively. Then, from this, it follows that
the function $F^*$ is nonincreasing and equimeasurable with $|F|$ and
\begin{equation}\label{eq3.6}
|\{t>0:F^*(t)>a\}|=\mu_F(a).
\end{equation}

For any $p,\,q\in(1,\,\fz)$,
the Lorentz space $L^{q,p}(\R^n)$ is defined by
$$
L^{q,p}(\R^n):=\lf\{F:\|F\|_{L^{q,p}(\R^n)}<\fz\r\},
$$
where we use the standard rearrangement quasi-norm
\begin{equation*}
 \|F\|_{L^{q,p}(\R^n)}
 :=\left(\int_0^\infty
        [t^{1/q}F^*(t)]^p\,\frac{d t}{t}\right)^{1/p}.
\end{equation*}

By \eqref{eq3.5}, the real-interpolation identity
$(L^{q_0}(\mathbb{R}^n),L^{q_1}(\mathbb{R}^n))_{\theta,p}=L^{q,p}(\mathbb{R}^n)$
(see, for instance, \cite[Theorem~5.3.1]{bl76}), and the parameter identity
\[
 \frac1q=\frac{1-\theta}{q_0}+\frac{\theta}{q_1},
\]
we conclude that
\begin{equation}\label{eq3.8}
[\partial^jI_{2s}F]_{W^{\tau,p}(\R^n)}^p\lesssim
\int_0^\infty[t^{1/q}F^*(t)]^p\,\frac{d t}{t}.
\end{equation}

Let $A(t):=t^{1/q}F^*(t)$. For any $F\in L^m(\mathbb{R}^n)$ with $m\in(1,\fz)$,
the layer-cake formula provides
\begin{equation*}
\int_\mathbb{R}^n |F(x)|^m\,dx
=m\int_0^\fz a^{m-1}\mu_F(a)\,da.
\end{equation*}
From this and \eqref{eq3.6}, we deduce that
\begin{align*}
\int_0^\fz |F^*(t)|^q\,dt
&=q\int_0^\fz a^{q-1}|\{t>0:F^*(t)>a\}|\,da\\\nonumber
&=q\int_0^\fz a^{q-1}\mu_F(a)\,da=\|F\|^q_{L^q(\mathbb{R}^n)}.
\end{align*}
By this and the monotonicity of $f^*$, we find
that
\begin{equation*}
 A(t)^q=tF^*(t)^q\leq\int_0^tF^*(u)^qdu
 \leq\|F\|_{L^q(\R^n)}^q
\end{equation*}
and
\begin{equation*}
 \int_0^\infty A(t)^q\,\frac{\dd t}{t}=\|F\|_{L^q(\R^n)}^q,
\end{equation*}
which, together with \eqref{eq3.8}, further imply that
\begin{equation*}
[\partial^jI_{2s}F]_{W^{\tau,p}(\R^n)}^p
\ls\int_0^\infty A(t)^{p-q}A(t)^q\,\frac{\dd t}{t}
 \leq\|F\|_{L^q(\R^n)}^{p-q}\|F\|_{L^q(\R^n)}^q=\|F\|_{L^q(\R^n)}^{p}.
\end{equation*}

For the subcritical case $\gamma=n(1/q-1/p)\in(0,\,2s-t)$, we first
normalize the cube scale to $\ell=1$. Choose a fixed smooth cutoff
$\chi$ which satisfies $\chi=1$ on domain $\{x-y:x\in Q^2\ \text{and}\ y\in Q^2\}$ and is supported in $B_{R_0}$
for a fixed $R_0>0$, and set $J=\chi K_{2s}$. Define $m$ by
$1+1/p=1/q+1/m$ and write
$\Delta_h\partial^jJ(x):=\partial^jJ(x+h)-\partial^jJ(x)$. For any
$\sigma\in(\tau,\,\min\{1,2s-j-\gamma\})$, by splitting the integral,
we conclude that
\[
 \|\partial^jJ\|_{L^m(\mathbb{R}^n)}\le C
 \]
 and
 \[
 \|\Delta_h\partial^jJ\|_{L^m(\mathbb{R}^n)}\le C\min\{|h|^\sigma,1\}.
\]
Indeed, when $0<|h|<1/4$, the kernel bound
$|\partial^jK_{2s}(x)|\le C|x|^{2s-j-n}$ bounds the near part by
$C|h|^{2s-j-\gamma}$. The mean value theorem and the derivative
bound for $\partial^jK_{2s}$ bound the far part by
\[
 C|h|\left(1+
 \left[\int_{2|h|}^{R_0+1}
 r^{m(2s-j-n-1)+n-1}\dd r\right]^{1/m}\right)\le C|h|^\sigma.
\]
From Young's convolution inequality, it follows that
$\|\Delta_h\partial^j(J*F)\|_{L^p(\mathbb{R}^n)}\le
\|\Delta_h\partial^jJ\|_{L^m(\mathbb{R}^n)}\|F\|_{L^q(\mathbb{R}^n)}$.
By the fact that $\tau\in(0,\,\sigma)$ and
integrating its $p$-th power against $|h|^{-n-\tau p}\dd h$,
we find that the subcritical estimate holds.

For the remaining integer case $t=1$, the parameter range implies
$s\in(1/2,\,1)$.
On the critical line $\gamma=2s-1$, the pointwise bound for the
gradient of the Riesz kernel and the Hardy--Littlewood--Sobolev
inequality (see, for instance, \cite[Chapter~V]{ste70}) yield
\[
 |\nabla I_{2s}F|\le C I_{2s-1}|F|,
 \qquad \|\nabla I_{2s}F\|_{L^p}\le C\|F\|_{L^q}.
\]
When $\gamma\in(0,\,2s-1)$, the truncated
gradient kernel satisfies $\|\nabla J\|_{L^m(\R^n)}\le C$, so Young's inequality applies.

For the lower-order terms in either case, work at unit scale
and use the same cutoff $J=\chi K_{2s}$, where
$1+1/p=1/q+1/m$. For any integer $k\in[0,\,t)$,
one has $2s-k-\gamma>0$, then by applying Young's convolution
inequality to $\partial^k(J*F)=(\partial^kJ)*F$, we find that
$$
 \|\partial^kJ\|_{L^m(\R^n)}\le C
$$
and
$$
\|\partial^k(J*F)\|_{L^p(\R^n)}\le C\|F\|_{L^q(\R^n)}.
$$
This, together with the fact that $\chi=1$ on domain $\{x-y:x\in Q^2\ \text{and}\ y\in Q^2\}$,
implies the lower-order estimates hold.

Finally, let $x_Q$ be the center of $Q$ and rescale by
$x:=x_Q+\ell z$. The left-hand side of \eqref{eq3.1}
scales as $\ell^{2s-t+n/p}$, while the $L^q$ data norm scales as
$\ell^{n/q}$. Since $\gamma=n(1/q-1/p)$, their ratio gives the factor
$\ell^{2s-t-\gamma}$.
For general data, approximate $F$ by smooth compactly supported
functions. Local convergence of the potentials identifies the
distributional derivatives. Fatou's lemma for fractional seminorms
and weak lower semicontinuity when $t=1$ then give
\eqref{eq3.1}. This finishes the proof of Lemma \ref{le3.1}.
\end{proof}

\begin{lemma}\label{le3.2}
Let $Q^1\Subset Q^2$ be concentric cubes of side lengths
comparable to $\ell$, with fixed size ratios. Suppose that
$v\ge0$ on $\R^n$, $v\in L^1(\R^n,w_s)$, and $v$ is
$s$-harmonic on $Q^2$. Then, for any $t\in(0,2)$ and $p\in(1,\,\fz)$,
\begin{equation}\label{e3.5}
 N_{t,p,\ell}(v;Q^1)\le C\ell^{-t}\|v\|_{L^p(Q^2)}.
\end{equation}
Here the positive $C$ depends only on $n$, $s$, $t$, $p$, and the fixed cube ratios.
\end{lemma}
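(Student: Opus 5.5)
By scaling ($x=x_Q+\ell z$) we reduce to $\ell=1$. Both sides of \eqref{e3.5} scale like $\ell^{n/p-t}$ times the unit-scale quantity, and $s$-harmonicity, nonnegativity and membership in $L^1(\R^n,w_s)$ are invariant under this dilation. The unit-scale estimate then rests on two ingredients. The first is interior regularity: for every $k$, $\sup_{Q^{1.5}}|\nabla^k v|$ is bounded by $C\bigl(\|v\|_{L^1(Q^2)}+\mathrm{Tail}(v)\bigr)$, where $Q^{1.5}$ is an intermediate concentric cube with $Q^1\Subset Q^{1.5}\Subset Q^2$. The second is control of the tail $\mathrm{Tail}(v):=\int_{\R^n\setminus Q^2}v(y)|y-x_Q|^{-n-2s}\,dy$ by $\|v\|_{L^1(Q^2)}$. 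The second ingredient is where the hypothesis $v\ge0$ enters.

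\textbf{Step 1: smoothness and the Poisson representation.} Since $v$ is a distributional $s$-harmonic function with finite tail weight, $v$ is $C^\infty$ inside $Q^2$; this follows by mollification, because the equation commutes with convolution. On any ball $B=B(x_0,\rho)\subset Q^2$, $v$ is represented by the Poisson kernel of the ball (see \cite[Chapter 1]{bbkrs09}):
\[
v(x)=c\int_{\R^n\setminus B}\left(\frac{\rho^2-|x-x_0|^2}{|y-x_0|^2-\rho^2}\right)^s\frac{v(y)}{|x-y|^n}\,dy,\qquad x\in B.
\]
I would justify this by approximating $v$ by mollifications $v_\epsilon$, which are classically $s$-harmonic on a slightly smaller set and have uniformly controlled tails, and then pass to the limit.

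\textbf{Step 2: tail control via nonnegativity.} Since $v\ge0$, I take $x$ in a fixed small ball $B_*$ around $x_Q$ with $2B_*\subset Q^2$. The Poisson formula on $2B_*$, after dropping the part of the integral over $Q^2\setminus 2B_*$ (which is nonnegative), gives
\[
v(x)\ \ge\ c\int_{\R^n\setminus Q^2}\frac{v(y)}{|y-x_Q|^{n+2s}}\,dy = c\,\mathrm{Tail}(v).
\]
This uses that for $|y-x_Q|\ge c$ the kernel is comparable to $|y-x_Q|^{-n-2s}$. Averaging over $B_*$ yields $\mathrm{Tail}(v)\le C\|v\|_{L^1(B_*)}\le C\|v\|_{L^p(Q^2)}$.

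\textbf{Step 3: interior derivative bounds.} For $x\in Q^{1.5}$, I apply the Poisson formula on balls $B(x_0,\rho)$ with $Q^{1.5}\subset\bigcup B(x_0,\rho/2)$ and $B(x_0,\rho)\subset Q^2$. The singularity $(|y-x_0|^2-\rho^2)^{-s}$ at the sphere is integrable but awkward. To avoid it, I average the representation over $\rho\in(\rho_1,\rho_2)$, which produces a kernel $\mathcal P(x,y)$ that is smooth in $x\in B(x_0,\rho_1/2)$ and satisfies
\[
|\nabla_x^k\mathcal P(x,y)|\le C_k\min\{1,|y-x_0|^{-n-2s}\}.
\]
With this kernel, $|\nabla^k v(x)|\le C_k\bigl(\|v\|_{L^1(Q^2)}+\mathrm{Tail}(v)\bigr)\le C\|v\|_{L^p(Q^2)}$ for $k=0,1,2$.

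\textbf{Step 4: conclusion.} On the bounded cube $Q^1$ we now have $\|v\|_{C^2(Q^1)}\le C\|v\|_{L^p(Q^2)}$. Every component of $N_{t,p,1}(v;Q^1)$ is controlled by this $C^2$ norm. The terms $\|v\|_{L^p}$ and $\|\nabla v\|_{L^p}$ are immediate. For the seminorms $[v]_{W^{t,p}(Q^1)}$ when $t<1$, and $[\nabla v]_{W^{t-1,p}(Q^1)}$ when $1<t<2$, I use the Lipschitz bound $|w(x)-w(y)|\le\|\nabla w\|_\infty|x-y|$ together with finiteness of $\int_{Q^1}\int_{Q^1}|x-y|^{p-n-\tau p}\,dx\,dy$ for $\tau<1$.

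\textbf{Main obstacle.} The main obstacle is making the averaged Poisson representation rigorous under only the distributional hypothesis with $v\in L^1(w_s)$. I would first prove it for $v_\epsilon=v*\eta_\epsilon$, which is smooth and $s$-harmonic on $\{x:\dist(x,\partial Q^2)>\epsilon\}$, where the classical uniqueness for the Dirichlet problem in a ball applies. I would then let $\epsilon\to0$ using dominated convergence in the weighted $L^1$ space.
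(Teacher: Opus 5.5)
Your argument is correct, but it is organized differently from the paper's proof.

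\textbf{The paper's route.} The paper never estimates the tail separately. It differentiates $\log P_B$ for the explicit ball Poisson kernel and obtains
\[
|\partial_x^kP_B(x,y)|\le CR^{-k}P_B(x,y),\qquad x\in B(z,R/2),\ k=1,2.
\]
Integrating this against $v\ge0$ gives the pointwise bounds $|\partial^kv(x)|\le CR^{-k}v(x)$ and, along segments, the Harnack bound $v(x)\le Cv(x')$ on half-balls. Averaging the Harnack bound over a ball then converts $v(x)$ into $CR^{-n/p}\|v\|_{L^p}$. Positivity is used at every point, and the exterior values of $v$ are absorbed into $v(x)$ itself.

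\textbf{Your route.} You split the proof into two independent pieces.
\begin{itemize}
\item A positivity step: the lower bound $v\ge c\,\mathrm{Tail}(v)$ on $B_*$, which gives $\mathrm{Tail}(v)\lesssim\|v\|_{L^1(B_*)}$.
\item A linear interior estimate $\|v\|_{C^2}\lesssim\|v\|_{L^1(Q^2)}+\mathrm{Tail}(v)$, valid for signed $s$-harmonic functions with finite tail.
\end{itemize}

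\textbf{Trade-offs.} Your approach costs an extra device: averaging the Poisson representation over radii to remove the $(|y-x_0|^2-\rho^2)^{-s}$ singularity and obtain a bounded, smooth kernel. In return, that bounded kernel makes the passage from $v*\eta_\epsilon$ to $v$ a plain dominated-convergence argument in $L^1(\R^n,w_s)$, and it directly produces a smooth representative of $v$. The paper instead first assumes $v$ is smooth, so that it can differentiate under the singular kernel, and then removes this assumption by weak compactness in $W^{2,p}$ on nested cubes. Your decomposition also makes clear that nonnegativity is needed only to control the tail. The paper's route, for its part, yields the slightly stronger pointwise gradient-Harnack inequality $|\nabla v|\le Cv/R$ as a by-product.
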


\begin{proof}
First suppose that $v$ is smooth in $Q^2$. Let
$B=B(z,R)\Subset Q^2$. By the Poisson representation for a ball
(see, for instance, \cite[Theorem~2.10]{Bu16}), for any $x\in B$,
\[
 v(x)=\int_{\R^n\setminus\overline B}P_B(x,y)v(y)\,dy,
\]
where $P_B$ denotes the fractional Poisson kernel. For any $x\in B$
and $y\notin\overline B$, the fractional Poisson kernel is given by
\[
 P_B(x,y)=\frac{\Gamma(n/2)\sin(\pi s)}{\pi^{n/2+1}}
 \frac{(R^2-|x-z|^2)^s}{(|y-z|^2-R^2)^s|x-y|^n}.
\]
For any $x\in B(z,R/2)$, by differentiating the logarithm of the
displayed kernel, we find that $|\nabla_x\log P_B|\le C/R$. A second
differentiation yields
\begin{align}\label{eq3.7}
 \partial_x^2\log P_B={}&-\frac{2sI}{R^2-|x-z|^2}
 -\frac{4s(x-z)\otimes(x-z)}{(R^2-|x-z|^2)^2}\\\notag
 &-\frac{nI}{|x-y|^2}
 +\frac{2n(x-y)\otimes(x-y)}{|x-y|^4}.
\end{align}
Here $I$ is the $n\times n$ identity matrix and $a\otimes b$
denotes the tensor product of vectors $a,b\in\R^n$. Since
$|x-y|\ge R/2$ on the half-ball, \eqref{eq3.7} implies
$|\partial_x^kP_B|\le C R^{-k}P_B$ for $k=1,2$, where $C$ depends only on $n$ and $s$,
which justifies differentiation under the integral. From this and the nonnegativity of $v$, 
we deduce that, for any
$x,x'\in B(z,R/2)$,
$
 |\partial^kv(x)|\le CR^{-k}v(x)$ and
 $
 v(x)\le C v(x').
$
By integrating the second inequality over a ball contained in
$Q^2$ and then using a finite covering of $Q^1$ by half-balls
whose radii are comparable to $\ell$, we conclude that
\begin{equation}\label{e3.8}
 \|\partial^kv\|_{L^\infty(Q^1)}
 \le C\ell^{-k-n/p}\|v\|_{L^p(Q^2)},\qquad k=1,2.
\end{equation}
If $t\in(0,1)$, then the mean value theorem and the bound for $\nabla v$
control $|v(x)-v(y)|$ by $C|x-y|\ell^{-1-n/p}\|v\|_{L^p(Q^2)}$.
If $t\in(1,2)$, the same argument applied to $\nabla v$ uses the bound
for $\partial^2v$. In the two cases, radial integration in the fractional
seminorm produces the integrable powers $p(1-t)-1$ and
$p(2-t)-1$, respectively. For $t=1$, only the $L^p$ gradient
bound is required.
This proves \eqref{e3.5} for smooth $v$.

For a distributional $s$-harmonic function $v$, let
$v_\varepsilon:=v*\eta_\varepsilon$, where $\eta_\varepsilon$ is a
nonnegative mollifier of radius smaller than the fixed separation
between $Q^1$ and $\partial Q^2$. These mollifications remain
nonnegative and have integrable fractional tails. They are
$s$-harmonic on the interior balls used above. By Young's
inequality, their $L^p$ norms on intermediate cubes are bounded
by $\|v\|_{L^p(Q^2)}$. The first two derivative estimates \eqref{e3.8} yield
weak compactness in $W^{2,p}$ on nested interior cubes. The
distributional limit identifies the derivatives of $v$, and lower
semicontinuity gives \eqref{e3.5}, with constants
independent of $\varepsilon$. This finishes the proof of Lemma \ref{le3.2}.
\end{proof}

\subsection{Zero extension and Sobolev lifting}
We first establish the weighted estimate for zero extensions that
will give the converse implication in Proposition~\ref{prop:lifting}.

\begin{lemma}\label{le3.3}
Let $n\ge2$, $t\in(0,\,2)$, $p\in(1,\infty)$, and $\Omega\subset\rn$ be a bounded Lipschitz domain. If $w\in W^{t,p}(\R^n)$ and $w=0$ almost everywhere
on $\Omega^c$, then
\begin{equation}\label{e3.7}
 \|w/\delta^t\|_{L^p(\Omega)}\le C\|w\|_{W^{t,p}(\R^n)},
\end{equation}
where the positive constant $C$ depends only on $n$, $t$,
$p$, and $\Omega$.
\end{lemma}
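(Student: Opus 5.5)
The proof is a fractional Hardy inequality. It uses only one geometric fact: the complement of a Lipschitz domain is uniformly fat. Fix $x\in\Omega$, put $d:=\delta(x)$, and let $\xi\in\partial\Omega$ be a nearest boundary point. If $2d<r_0$, Remark~\ref{r2.2} gives an exterior corkscrew ball $B_x:=B(A^{\rm ext}_{\xi,2d},2c_{\rm cs}d)\subset\Omega^c\cap B(\xi,2d)$. It lies in $B(x,3d)$, has measure comparable to $d^n$, and $w=0$ a.e. on it. Points with $d\ge r_0/2$ form a set compactly contained in $\Omega$, on which $\delta^{-t}$ is bounded. There I simply bound $\|w/\delta^t\|_{L^p}$ by $C\|w\|_{L^p}$, so only $\delta(x)<r_0/2$ needs the argument. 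I split into the three regimes of $t$, in the order below, because the case $t\in(1,2)$ reduces to the other two.

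\emph{Case $t\in(0,1)$.} Since $w=0$ a.e. on $B_x$, averaging over $y\in B_x$ gives
\[
|w(x)|^p=\fint_{B_x}|w(x)-w(y)|^p\,dy\le Cd^{-n}\int_{B(x,3d)}|w(x)-w(y)|^p\,dy.
\]
For $y\in B(x,3d)$ we have $d^{-n}\le C\,d^{tp}|x-y|^{-n-tp}$. Hence
\[
\frac{|w(x)|^p}{\delta(x)^{tp}}\le C\int_{\R^n}\frac{|w(x)-w(y)|^p}{|x-y|^{n+tp}}\,dy.
\]
Integrating in $x\in\Omega$ yields \eqref{e3.7} with the seminorm $[w]_{W^{t,p}(\R^n)}$ on the right. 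This holds for a.e. $x$, namely every $x$ at which the inner integral is finite.

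\emph{Case $t=1$.} Here I would use the pointwise Sobolev representation for a function vanishing on a set of proportional measure: for a.e. $x$,
\[
|w(x)|\le C\int_{B(x,3d)}\frac{|\nabla w(y)|}{|x-y|^{n-1}}\,dy .
\]
The proof of this bound first takes smooth $w$ and averages the identity $w(x)-w(z)=-\int_0^1\nabla w(x+\lambda(z-x))\cdot(z-x)\,d\lambda$ over $z\in B_x$. It then passes to general $w$ by density and Lebesgue points, using the fact that the sets $B_x$ have measure comparable to $d^n$. Splitting $B(x,3d)$ into dyadic annuli bounds the right side by $C\,d\,\mathcal M(|\nabla w|)(x)$. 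So $|w|/\delta\le C\mathcal M(|\nabla w|)$ a.e. in $\Omega$, and the $L^p$ boundedness of $\mathcal M$ finishes this case.

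\emph{Case $t\in(1,2)$.} Since $w=0$ a.e. on the open set $\overline{\Omega}^c$, also $\nabla w=0$ a.e. there, and $\partial\Omega$ is Lebesgue-null. Thus each component of $\nabla w$ is a $W^{t-1,p}$ function vanishing a.e. outside $\Omega$. The case $t-1\in(0,1)$ then gives $g:=|\nabla w|/\delta^{t-1}\in L^p(\Omega)$ with $\|g\|_{L^p(\Omega)}\le C[\nabla w]_{W^{t-1,p}}$. Now I feed this into the representation from the case $t=1$. For $y\in B(x,3d)\cap\Omega$ we have $\delta(y)\le4d$, so $|\nabla w(y)|\le Cd^{t-1}g(y)$, and $\nabla w=0$ a.e. off $\Omega$. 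Therefore
\[
|w(x)|\le Cd^{t-1}\int_{B(x,3d)}\frac{g(y)}{|x-y|^{n-1}}\,dy\le Cd^{t}\,\mathcal M g(x),
\]
hence $|w|/\delta^t\le C\mathcal Mg$, and \eqref{e3.7} follows.

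The main obstacle I expect is the rigorous justification of the pointwise representation for non-smooth $w$. The approximation must keep the property ``vanishes on $B_x$'', or at least control the error there, uniformly in $x$. I would handle this by proving the inequality with $w$ replaced by $|w(x)-\fint_{B_x}w|$ for smooth approximants $w_k\to w$ in $W^{1,p}$. In the limit $\fint_{B_x}w_k\to0$, and the bound passes to a.e. $x$ via the maximal-function bound for the right-hand side. The other steps are routine.
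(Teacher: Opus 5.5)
Your argument is correct, but it takes a different route from the paper's.

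\textbf{The paper's route.} The paper works in boundary charts and uses second differences $\Delta_h^2w(x)=w(x+2h)-2w(x+h)+w(x)$. For $x$ at height $a=x_n-\varphi(x')$ above the graph, the explicit box $E_x=\{|h'|<a,\ -(L+4)a<h_n<-(L+3)a\}$ has $x+h,\,x+2h\in\Omega^c$, so $\Delta_h^2w(x)=w(x)$ for $h\in E_x$. Fubini against $\int\|\Delta_h^2w\|_{L^p}^p|h|^{-n-tp}\,dh$ then closes the argument. That integral is at most $2^p[w]_{W^{t,p}}^p$ when $t<1$, and at most $[\nabla w]_{W^{t-1,p}}^p$ when $t\in(1,2)$ by the fundamental theorem of calculus. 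So both non-integer ranges are handled in one stroke. The case $t=1$ is done separately, with a partition of unity and the one-dimensional Hardy inequality on vertical lines.

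\textbf{Your route.} For $t<1$ you average first differences over an exterior corkscrew ball. For $t=1$ you prove $|w|\le C\delta\,\mathcal M(|\nabla w|)$ from a pointwise Riesz-type representation. For $t\in(1,2)$ you bootstrap: apply the fractional case to $\nabla w$, then feed $|\nabla w|\le C\delta^{t-1}g$ into the same representation.

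\textbf{Comparison.} The paper's proof is shorter and needs no approximation step. However, it relies on the graph structure, namely an exterior cone so that both $x+h$ and $x+2h$ leave $\Omega$, and on charts. Yours uses only that $\Omega^c$ contains a ball of radius comparable to $\delta(x)$ within distance comparable to $\delta(x)$. It therefore extends essentially verbatim to domains with exterior corkscrew complements. The price is the density and Lebesgue-point justification of the pointwise representation, which you handle correctly via the maximal-function bound on $\mathcal M(|\nabla w_k-\nabla w|)$.

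\textbf{One harmless imprecision.} In the step $t\in(1,2)$ you evaluate $g$ at points with $\delta(y)<4\delta(x)<2r_0$, outside the strip where your fractional estimate used only the seminorm. So the bound you actually have is $\|g\|_{L^p(\Omega)}\le C\big(\|\nabla w\|_{L^p}+[\nabla w]_{W^{t-1,p}}\big)$. Alternatively, note that the corkscrew argument works at all scales up to $\diam\Omega$, with constants depending on $\diam\Omega/r_0$. Either way the conclusion is unaffected, since the right-hand side of \eqref{e3.7} is the full norm.
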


\begin{proof}
For $t\ne1$ and $x,h\in\R^n$, define the second difference by
$\Delta_h^2w(x):=w(x+2h)-2w(x+h)+w(x)$. If $t\in(0,1)$,
from the translation invariance and the triangle inequality, we deduce that
\begin{equation}\label{equ3.8}
 \int_{\R^n}\frac{\|\Delta_h^2w\|_{L^p(\mathbb{R}^n)}^p}{|h|^{n+tp}}\,dh
 \le 2^p[w]_{W^{t,p}(\R^n)}^p.
\end{equation}
If $t\in(1,2)$, by the fundamental theorem of calculus, we conclude that
\[
 \Delta_h^2w(x)=\int_0^1
 [\nabla w(x+h+\tau h)-\nabla w(x+\tau h)]\cdot h\,d\tau.
\]
Consequently,
\begin{equation}\label{equ3.9}
 \int_{\R^n}\frac{\|\Delta_h^2w\|_{L^p(\mathbb{R}^n)}^p}{|h|^{n+tp}}\,dh
 \le [\nabla w]_{W^{t-1,p}(\R^n)}^p.
\end{equation}

We next establish the geometric lower bound. In a boundary chart,
write $\Omega=\{(x',x_n):x_n>\varphi(x')\}$ locally, where
$\varphi:\R^{n-1}\to\R$ is an $L$-Lipschitz function. For
$x=(x',x_n)$ in a smaller chart, set
$a:=x_n-\varphi(x')>0$. Then
\[
 \delta(x)\le a\le\sqrt{1+L^2}\,\delta(x).
\]
For any $x=(x',x_n)$ in a smaller chart with sufficiently small $a$, define
\[
 E_x:=\{h:|h'|<a,\;-(L+4)a<h_n<-(L+3)a\}.
\]
Then, for any $h\in E_x$,  $x+h,x+2h\in\Omega^c$.
From this and the facts that $w(x)=0$ on $\boz^c$, $|E_x|\sim a^n$, and $|h|\sim a$ on $E_x$,
it follows that
\begin{equation}\label{e3.12}
 \int_{E_x}\frac{|\Delta_h^2w(x)|^p}{|h|^{n+tp}}\,dh
 \ge c\frac{|w(x)|^p}{\delta(x)^{tp}}.
\end{equation}
Here the positive constant $c$ depends only on $n$, $t$, $p$, and $\Omega$.
Integrating \eqref{e3.12} in $x$
over a finite cover of smaller boundary charts and using
\eqref{equ3.8} or \eqref{equ3.9}
controls the weighted norm in a boundary strip. On the complement
of this strip, $\delta$ has a positive lower bound, so the weighted
norm is at most $C\|w\|_{L^p(\R^n)}$. This proves
\eqref{e3.7} when $t\ne1$.

For $t=1$, localize with a smooth partition of unity in the same
charts. On almost every vertical line the zero extension of $w$ is a
$W^{1,p}$ function. By integration by parts and H\"older's inequality,
we find that, for any compactly supported $g\in W^{1,p}(0,\infty)$,
the one-dimensional Hardy inequality
\begin{equation}\label{e3.13}
 \int_0^\infty\left|\frac{g(a)}a\right|^pda
 \le\left(\frac p{p-1}\right)^p
       \int_0^\infty|g'(a)|^pda,
\end{equation}
holds. Since
$a\sim\delta(x)$ in the smaller chart, from applying \eqref{e3.13} on
almost every vertical line to
$g(a)=(\chi w)(x',\varphi(x')+a)$, where $\chi$ is a member of
the partition of unity, and integrating in $x'$, it follows that the desired weighted estimate
holds for $g$. By this, the fact that terms involved derivatives of $\chi$
contribute at most $C\|w\|_{L^p(\mathbb{R}^n)}$, and summing over the finitely many charts,
we conclude that \eqref{e3.7} holds for $t=1$. This finishes the proof
of Lemma \ref{le3.3}.
\end{proof}

We now combine the local estimates with Whitney localization to
prove Proposition~\ref{prop:lifting}.
Let $\{Q\}$ be a Whitney decomposition of $\Omega$, and denote
the side length of $Q$ by $\ell(Q)$. For each $Q$, choose concentric
dilations $Q^1$ and $Q^2$ with fixed dilation factors such that
\[
Q\Subset Q^1\Subset Q^2\Subset\Omega,
\]
the family $\{Q^2\}$ has bounded overlap, and
$\delta(x)\sim\ell(Q)$ for all $x\in Q^2$.
Throughout the proof, we set $\alpha:=2s-t$ and
$\gamma:=n(1/q-1/p)$, as in Proposition~\ref{prop:lifting}.

\begin{proof}[Proof of Proposition~\ref{prop:lifting}]
First let $f\ge0$ be bounded and write $u=\G f$. For every
Whitney cube $Q$, decompose the datum and the potential by setting
\[
 F_Q=f\one_{Q^2},\quad v_Q=\G F_Q,\quad
 b_Q=u-v_Q,\quad \Phi_Q=I_{2s}F_Q,\quad z_Q=\Phi_Q-v_Q.
\]
Then $b_Q$ and $z_Q$ are $s$-harmonic on $Q^2$.
From the positivity and point-wise estimate of Green function in 
Lemma \ref{le2.5}, we deduce that
$0\le v_Q\le u$, $0\le v_Q\le\Phi_Q$, and hence
$b_Q,z_Q\ge0$ globally. 
Since $u$ and $v_Q$ are
integrable and supported in $\overline\Omega$, while $\Phi_Q$
has Riesz decay, the fractional tails of $b_Q$ and $z_Q$ are integrable.
By this and applying Lemmas~\ref{le3.1} and
\ref{le3.2} to the identity
$u=\Phi_Q-z_Q+b_Q$, and using $z_Q\le\Phi_Q$ and $b_Q\le u$, we find that
\begin{equation*}
 N_{t,p,\ell(Q)}(u;Q^1)
 \le C\left(\ell(Q)^{\alpha-\gamma}\|f\|_{L^q(Q^2)}
                      +\ell(Q)^{-t}\|u\|_{L^p(Q^2)}\right).
\end{equation*}
This, together with the facts that the bounded overlap of the dilated Whitney cubes, $p>q$, $\az\ge\gz$,
and $\ell(Q)\le \diam(\boz)$, implies that
\begin{align}
 \sum_Q N_{t,p,\ell(Q)}(u;Q^1)^p
 &\le C \|f\|_{L^q(\Omega)}^p
              +C\|u/\delta^t\|_{L^p(\Omega)}^p.
 \label{equ3.11}
\end{align}
Indeed, $\sum_Q\|f\|_{L^q(Q^2)}^p
\le(\sum_Q\|f\|_{L^q(Q^2)}^q)^{p/q}\le C\|f\|_{L^q(\Omega)}^p$.

We then need the following elementary localization inequality that, if $0<\sigma<1$
and $w$ is zero outside $\Omega$, then
\begin{equation}\label{equ3.12}
 [w]_{W^{\sigma,p}(\R^n)}^p
 \le C\sum_Q[w]_{W^{\sigma,p}(Q^1)}^p
              +C\|\delta^{-\sigma}w\|_{L^p(\Omega)}^p.
\end{equation}
To see this, split the pairs $(x,y)\in\Omega\times\Omega$
into the near region
$|x-y|<c\min\{\delta(x),\delta(y)\}$ and its complement,
where $c>0$ depends only on the Whitney dilations.
For sufficiently small fixed $c$, a near pair belongs to $Q^1\times Q^1$
for the Whitney cube containing $x$. For a far pair, the
$1$-Lipschitz property of $\delta$ implies
$|x-y|\ge c'\max\{\delta(x),\delta(y)\}$. Integrating
$2^{p-1}(|w(x)|^p+|w(y)|^p)|x-y|^{-n-\sigma p}$ gives the
weighted term. Pairs crossing the exterior give the same upper
bound, since $|x-y|\ge\delta(x)$ when $y\in\Omega^c$.

When $t\in(0,\,1)$, apply \eqref{equ3.12} with $w=u$ and
$\sigma=t$. By \eqref{equ3.11} and the fact that
$\|u\|_{L^p(\Omega)}\le [\diam(\boz)]^t\|u/\delta^t\|_{L^p(\Omega)}$,
we obtain \eqref{eq:lifting} in this case.

Suppose next that $t\in[1,\,2)$. From \eqref{equ3.11}, it follows that
\begin{equation}\label{equ3.13}
 \|\delta^{1-t}\nabla u\|_{L^p(\Omega)}^p
 +\one_{\{t>1\}}\sum_Q
       [\nabla u]_{W^{t-1,p}(Q^1)}^p
 \le C\left(\|f\|_{L^q(\Omega)}^p
                        +\|u/\delta^t\|_{L^p(\Omega)}^p\right).
\end{equation}
This proves \eqref{eq:lifting} when $t=1$.
For $t\in(1,\,2)$, by \eqref{equ3.13} and \eqref{equ3.12} with $\sigma=t-1$,
we find that
\[
 [\nabla u]_{W^{t-1,p}(\R^n)}^p
 \le C\left(\|f\|_{L^q(\mathbb{R}^n)}^p
                         +\|u/\delta^t\|_{L^p(\Omega)}^p\right).
\]
The lower-order terms follow from the same bounds.
This proves \eqref{eq:lifting} for bounded nonnegative data.

For bounded signed data, write $f=f^+-f^-$, where
$f^+:=\max\{f,0\}$ and $f^-:=\max\{-f,0\}$. Then
$\G f^\pm\le\G|f|$. For general $f\in L^q(\Omega)$, truncate $f^+$ and
$f^-$ separately. The resulting
Green potentials converge in $L^1(\R^n)$, using
$\|\G h\|_{L^1(\Omega)}\le C\|h\|_{L^1(\Omega)}$. Their quotient norms are bounded
by $\|\G|f|/\delta^t\|_{L^p(\Omega)}$. The preceding estimate gives a uniform
$W^{t,p}$ bound, and reflexivity identifies the weak limit with
$\G f$. Weak lower semicontinuity proves \eqref{eq:lifting}.
Thus \eqref{eq:GQ} implies \eqref{eq:WS}. Conversely,
applying \eqref{eq:WS} to $|f|$ and then using
Lemma~\ref{le3.3} bounds the quotient in \eqref{eq:GQ}.
This proves the equivalence and finishes the proof of
Proposition \ref{prop:lifting}.
\end{proof}

\section{Proofs of the main results}\label{s4}
We first prove the volume reverse H\"older estimate and its
higher-order weighted version, then derive the Green quotient
bounds. In the second subsection we combine these bounds with
Proposition~\ref{prop:lifting} to prove Theorems~\ref{t1.1} and
\ref{thm:higher} and Corollary~\ref{c1.1}.

\subsection{Boundary moments and quotient estimates}

\begin{proof}[Proof of Proposition~\ref{prop:RH}]
Let $\beta_\Omega$ be the boundary decay exponent in
Lemma~\ref{le2.9} and $p\in(2,\,p_*)$, where
$p_*:=2+1/(s-\beta_\Omega)$.
Assume that $U$ is admissible on $B(\xi,r)$ in the sense of
Definition~\ref{d1.1}. In particular, its nonnegativity,
weighted integrability, and continuity on the boundary patch allow
the boundary Harnack principle and boundary H\"older estimates below.
Set
\[
 \rho:=r/8,\qquad H:=U/\delta^s,\qquad
 B_U:=U(A_{\xi,\rho})/\rho^s.
\]
As usual, the fixed Lipschitz localization radius $r_0$ may be
decreased by a constant depending only on the domain. We choose it
so that the boundary estimates below apply in every such patch.
If $U(A_{\xi,\rho})=0$, the strong maximum principle gives the
assertion immediately. We therefore assume $B_U>0$.

From Lemma~\ref{le2.9} and interior Harnack principle, it follows that, for any
$x\in D_\rho(\xi)$,
\begin{equation}\label{equ4.1}
 H(x)\leq C B_U\left(\frac{\delta(x)}\rho\right)^{\beta_\Omega-s}.
\end{equation}

Fix an interior point $x_*$ and decrease $r_0$ further so that
$x_*\notin B(\xi,8\rho)$ for all patches under consideration.
By the boundary Harnack comparison \eqref{eq:bhpcomp} in
Lemma~\ref{thm:bhp}, we conclude that, for
any $x\in D_\rho(\xi)$,
\[
 H(x)\leq C\frac{U(A_{\xi,\rho})}{G_s(A_{\xi,\rho},x_*)}
              \frac{G_s(x,x_*)}{\delta(x)^s}.
\]
Denote $S_h:=S_h(\xi,\rho)$ for the distance level set used in
Lemma~\ref{thm:square}. Then, from Lemma \ref{thm:square}, we deduce that,
for any $h\in(0,\,c_0\rho)$,
\[
 \left(\fint_{S_h}
       \left(\frac{G_s(x,x_*)}{h^s}\right)^2
       \,d\mathcal H^{n-1}(x)\right)^{1/2}
 \leq C\frac{G_s(A_{\xi,\rho},x_*)}{\rho^s}.
\]
By this and the surface-measure bound
$\mathcal H^{n-1}(S_h)\leq C\rho^{n-1}$ for
$0<h<c_0\rho$, we find that
\begin{equation}\label{equ4.2}
 \int_{S_h}H^2\,d\mathcal H^{n-1}
 \leq C\rho^{n-1}B_U^2.
\end{equation}

From multiplying \eqref{equ4.2} by the bound for $H^{p-2}$
from \eqref{equ4.1} and using the coarea formula, it follows that
\begin{align}\label{equ4.3}
 \int_{D_\rho(\xi)\cap\{\delta<c_0\rho\}}H^p\,dx
 &\leq C\rho^nB_U^p\int_0^{c_0}
          \tau^{-(s-\beta_\Omega)(p-2)}\,d\tau\leq C\rho^nB_U^p.
\end{align}
On $D_\rho(\xi)\cap\{\delta\geq c_0\rho\}$,
\eqref{equ4.1} gives $H\leq C B_U$. Since
$|D_\rho(\xi)|\sim\rho^n$, combined with \eqref{equ4.3}, we obtain
\[
 \left(\fint_{D_\rho(\xi)}H^p\,dx\right)^{1/p}\leq C B_U.
\]
A corkscrew ball at scale $\rho/2$ lies in $D_\rho(\xi)$.
On this ball, $\delta\sim\rho$. By this and the Harnack inequality,
$U\geq cU(A_{\xi,\rho})$ on the corkscrew ball, and therefore
$\fint_{D_\rho(\xi)}H\,dx\geq c B_U$, which proves
\eqref{eq1.4} for any $p\in(2,\,p_*)$.
H\"older's inequality extends \eqref{eq1.4} to any $p\in(1,\,p_*)$.
Hence
$p_\Omega^{\rm vol}\geq p_*>P_0(n,s,s)$.
This completes the proof of Proposition~\ref{prop:RH}.
\end{proof}

We next extend the boundary moment estimate to the quotient
$U/\delta^t$.
Let $t\in(s,\,\min\{s+1/2,2s\})$ and
\[
 a:=t-s,\qquad b:=s-\beta_\Omega,\qquad \alpha:=2s-t.
\]
Then $0\le a<1/2$, $b>0$, $\alpha>0$, and
$P_{t,\Omega}=(1+2b)/(a+b)>2$.
The universal comparisons in \eqref{eq:Pt} follow from
\[
 P_{t,\Omega}-\frac{2s+1}{t}
 =\frac{\beta_\Omega(2s+1-2t)}{t(t-\beta_\Omega)}>0.
\]
In the planar exceptional case, let
$e:=\beta_\Omega-s+1/2>0$. Then
\[
 P_{t,\Omega}-\frac2{a+1/2}
 =\frac{e(1-2a)}{(a+1/2)(a+1/2-e)}>0.
\]

Let $U$ be admissible on $B(\xi,8\rho)$. Repeating the
boundary Harnack comparison and parallel-surface estimate used
to prove \eqref{equ4.2}, for any $h\in(0,\,c_0\rho)$,
we obtain
\begin{equation}\label{equ4.4}
 \int_{S_h(\xi,\rho)}
       \left(\frac{U}{h^s}\right)^2\dd\mathcal H^{n-1}
 \le C\rho^{n-1}
          \left(\frac{U(A_{\xi,\rho})}{\rho^s}\right)^2.
\end{equation}
Here $S_h(\xi,\rho)$ is the distance level set defined before
Lemma~\ref{thm:square} and the constant $C$ depends only on $n,\,s$,
and $\Omega$.

\begin{lemma}\label{le4.1}
Let $2<P<P_{t,\Omega}$, $\xi\in\partial\Omega$, and
$0<\rho<r_0/8$. If $U$ is admissible on $B(\xi,8\rho)$, then
\begin{equation}\label{equ4.5}
 \left(\int_{D_\rho(\xi)}
           \left(\frac{U}{\delta^t}\right)^P\dd x\right)^{1/P}
 \le C\rho^{n/P-t}U(A_{\xi,\rho}).
\end{equation}
Here the positive constant $C$ depends only on $n$,
$s$, $t$, $P$, $\beta_\Omega$ and $\Omega$, and is independent of
$U,\,\xi$, and $\rho$.
\end{lemma}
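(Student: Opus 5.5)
The plan follows the proof of Proposition~\ref{prop:RH}, with $\delta^t$ in place of $\delta^s$. Split $D_\rho(\xi)$ into the boundary layer $\{\delta<c_0\rho\}$ and its complement, and set $B_U:=U(A_{\xi,\rho})/\rho^s$.

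\textbf{Interior part.} On $D_\rho(\xi)\cap\{\delta\ge c_0\rho\}$, Lemma~\ref{le2.9} (applicable since $U$ is admissible on $B(\xi,8\rho)$) together with the interior Harnack inequality gives $U\le CU(A_{\xi,\rho})$ and $\delta\sim\rho$. Since $|D_\rho(\xi)|\le C\rho^n$, this piece contributes at most $C\rho^{n}\rho^{-tP}U(A_{\xi,\rho})^P$, which is the right-hand side of \eqref{equ4.5} raised to the power $P$.

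\textbf{Boundary layer.} On each level set $S_h(\xi,\rho)$ with $0<h<c_0\rho$, write
\[
\Big(\frac{U}{\delta^t}\Big)^P=h^{-aP}\Big(\frac{U}{h^s}\Big)^{P-2}\Big(\frac{U}{h^s}\Big)^2,\qquad a=t-s .
\]
Bound the middle factor pointwise by \eqref{equ4.1}, i.e.\ $U/h^s\le CB_U(h/\rho)^{-b}$ with $b=s-\beta_\Omega$. Integrate the last factor over $S_h(\xi,\rho)$ with \eqref{equ4.4}, which gives at most $C\rho^{n-1}B_U^2$. This yields
\[
\int_{S_h(\xi,\rho)}\Big(\frac{U}{\delta^t}\Big)^P\dd\mathcal H^{n-1}\le C\rho^{n-1}B_U^P\,h^{-aP}(h/\rho)^{-b(P-2)}.
\]

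Apply the coarea formula with $|\nabla\delta|=1$ a.e. and substitute $h=\tau\rho$. The boundary-layer integral becomes
\[
C\rho^{n}B_U^P\rho^{-aP}\int_0^{c_0}\tau^{-aP-b(P-2)}\,d\tau .
\]
This integral is finite exactly when $aP+b(P-2)<1$, that is, $P(a+b)<1+2b$, which is $P<P_{t,\Omega}$. Since $B_U^P\rho^{-aP}=\rho^{-tP}U(A_{\xi,\rho})^P$, the two pieces together give \eqref{equ4.5}.

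\textbf{Main obstacle.} The step needing care is the legitimacy of \eqref{equ4.4} and \eqref{equ4.1} on the whole of $D_\rho(\xi)$, uniformly in $\xi$ and $\rho$. First, Lemma~\ref{thm:square} requires a pole $x_*$ with $|x_*-\xi|>2\rho$, and Lemma~\ref{thm:bhp} must be applied on a patch where both $U$ and $G_s(\cdot,x_*)$ are admissible. This is arranged by choosing $r_0$ small relative to $\dist(x_*,\partial\Omega)$, exactly as in the proof of Proposition~\ref{prop:RH}. Second, the corkscrew normalizations at $A_{\xi,\rho}$ versus $A_{\xi,8\rho}$ must agree up to constants. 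This follows from the Carleson-type bound \eqref{pl:eq:carleson} and Harnack chains. Third, one needs the surface bound $\mathcal H^{n-1}(S_h(\xi,\rho))\le C\rho^{n-1}$, which holds for Lipschitz graphs.

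Once these are fixed, the computation above is routine. Note that the exponent constraint appears only through the one-dimensional $\tau$-integral.
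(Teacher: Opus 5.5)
Your proposal is correct and follows essentially the same route as the paper's proof. You use the same split of $D_\rho(\xi)$ at $\delta=c_0\rho$, the pointwise bound $U/\delta^s\le CB_U(\delta/\rho)^{-b}$ from Lemma~\ref{le2.9} on the factor $(U/h^s)^{P-2}$, and the level-set bound \eqref{equ4.4} on the remaining square, and then the coarea formula gives $\int_0^{c_0}\tau^{-aP-b(P-2)}\,d\tau$, which is finite exactly when $\kappa_P=1+2b-(a+b)P>0$; the interior piece is handled directly by \eqref{eq2.4}, as in the paper, and the points you list under ``main obstacle'' are already absorbed into \eqref{equ4.4} and the choice of $r_0$.
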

\begin{proof}
Set $H:=U/\delta^s$ and
$B_U:=U(A_{\xi,\rho})/\rho^s$. By \eqref{eq2.4},
$H\le C B_U(\delta/\rho)^{-b}$. Since
$U/\delta^t=\delta^{-a}H$, the coarea formula and
\eqref{equ4.4} imply
\begin{align}\label{equ4.6}
 \int_{D_\rho\cap\{\delta<c_0\rho\}}
               \left(\frac{U}{\delta^t}\right)^P\dd x
 &\le C\rho^{n-aP}B_U^P
           \int_0^{c_0}\tau^{-aP-b(P-2)}\dd\tau\\\notag
 &=C\rho^{n-tP}U(A_{\xi,\rho})^P
             \frac{c_0^{\kappa_P}}{\kappa_P},
\end{align}
where
\begin{equation*}
 \kappa_P:=1-aP-b(P-2)=1+2b-(a+b)P>0.
\end{equation*}
On the remainder $\delta\ge c_0\rho$, \eqref{eq2.4} bounds
$U/\delta^t$ by $C\rho^{-t}U(A_{\xi,\rho})$. This, together
with \eqref{equ4.6}, proves \eqref{equ4.5}. This finishes the proof
Lemma \ref{le4.1}.
\end{proof}

The coarea condition is $aP+b(P-2)<1$. At $t=s$ it reduces
to the condition in Proposition~\ref{prop:RH}. At $P=2$ it becomes
$2(t-s)<1$, which explains the upper threshold $s+1/2$.

We pass from the boundary moment to an annular kernel bound and
then to the quotient estimate for general data.
Set $\mathcal K_t(x,y):=G_s(x,y)/\delta(x)^t$ for $x,y\in\Omega$,
and extend this kernel by zero outside $\Omega\times\Omega$.
\begin{lemma}\label{le4.2}
For any $P\in(2,\,P_{t,\Omega})$, $y\in\R^n$, and $r\in(0,\,\fz)$,
\begin{equation}\label{equ4.7}
 \|\mathcal K_t(\cdot,y)\|_{L^P(\{r<|x-y|\le2r\})}
 \le Cr^{\alpha-n+n/P}.
\end{equation}
Here the positive constant $C$ depends only on $n,\,s,
\,t,\,P,\,\beta_\Omega$ and $\Omega$.
\end{lemma}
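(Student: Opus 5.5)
The plan is to repeat the proof of Lemma~\ref{le2.6}, with Lemma~\ref{le4.1} taking the place of the reverse H\"older inequality \eqref{eq1.4}. Assume $y\in\Omega$ and $r<D$; the other cases are trivial. Fix a small $\theta\in(0,1/100)$ with $64\theta D<r_0$, and put $\rho:=\theta r$ and $\mathcal A(y,r):=\Omega\cap\{r<|x-y|\le2r\}$.

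\emph{Interior part.} On $\mathcal A(y,r)\cap\{\delta\ge\rho/8\}$, Lemma~\ref{le2.5} gives
\[
\mathcal K_t(x,y)\le Cr^{2s-n}\rho^{-t}\le Cr^{\alpha-n}.
\]
Since $|\mathcal A(y,r)|\le Cr^n$, the $L^P$ norm of $\mathcal K_t(\cdot,y)$ over this region is at most $Cr^{\alpha-n+n/P}$.

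\emph{Boundary part.} Cover $\mathcal A(y,r)\cap\{\delta<\rho/8\}$ by balls $B(\xi_j,\rho)$. The centers $\xi_j$ form a maximal $\rho/4$-separated family in $\Gamma=\{\xi\in\partial\Omega:r/2\le|\xi-y|\le3r\}$, so $N\le C\theta^{-n}$, exactly as before. Because $|\xi_j-y|\ge r/2>8\rho$ and $8\rho<r_0$, the function $U:=G_s(\cdot,y)$ is admissible on $B(\xi_j,8\rho)$. Lemma~\ref{le4.1}, applied with this $\rho$, then gives
\[
\int_{D_\rho(\xi_j)}\mathcal K_t(x,y)^P\,dx\le C\rho^{n-tP}G_s(A_{\xi_j,\rho},y)^P .
\]
The corkscrew point satisfies $|A_{\xi_j,\rho}-y|\ge r/2-\rho\ge r/4$, so Lemma~\ref{le2.5} yields $G_s(A_{\xi_j,\rho},y)\le Cr^{2s-n}$. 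Each ball therefore contributes at most
\[
C\rho^{n-tP}r^{(2s-n)P}=C\theta^{n-tP}r^{(\alpha-n)P+n}.
\]
Summing over the $N\le C\theta^{-n}$ balls gives $Cr^{(\alpha-n)P+n}$, with a constant depending only on the fixed $\theta$.

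Combining the interior and boundary parts and taking $P$-th roots gives \eqref{equ4.7}. I will also note that, for $t=s$, this argument applied with \eqref{eq1.4} reproduces Lemma~\ref{le2.6}.

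\emph{Main point to check.} The only delicate step is applying Lemma~\ref{le4.1} at a scale where the pole lies outside $B(\xi_j,8\rho)$, which is what makes $G_s(\cdot,y)$ admissible there. This requires the separation $r/2>8\rho$, i.e.\ $\theta<1/16$, and $8\rho<r_0$; both hold for the $\theta$ fixed above. Continuity of $G_s(\cdot,y)$ on the patch and its vanishing on $\Omega^c$ are standard, as in Lemma~\ref{le2.6}. Unlike Lemma~\ref{le2.6}, no $L^1$ bound via Lemma~\ref{le2.4} is needed, because Lemma~\ref{le4.1} already bounds the $L^P$ norm directly in terms of the corkscrew value.
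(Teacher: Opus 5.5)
Your proposal is correct and follows the paper's proof essentially step for step. It uses the interior bound from Lemma~\ref{le2.5} on $\{\delta\ge\rho/8\}$, a covering of the boundary layer by $C\theta^{-n}$ patches $B(\xi_j,\rho)$ separated from the pole, Lemma~\ref{le4.1} on each patch, and the corkscrew bound $G_s(A_{\xi_j,\rho},y)\le Cr^{2s-n}$. Your more conservative choice $64\theta D<r_0$ is harmless, and your closing aside is loose, since recovering Lemma~\ref{le2.6} from \eqref{eq1.4} still needs the $L^1$ bound from Lemma~\ref{le2.4}; it plays no role in the lemma at hand.
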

\begin{proof}
The kernel vanishes when
$y\notin\Omega$, and the annulus misses $\Omega$ when $r\ge D$.
Thus let $y\in\Omega$ and $0<r<D$. Choose a fixed
$\theta\in(0,1/16)$ so small that $8\theta D<r_0$, where $r_0$
is the boundary localization radius, and let $\rho:=\theta r$.
Where $\delta(x)\ge\rho/8$, the fundamental-solution
bound $G_s(x,y)\le C|x-y|^{2s-n}$ gives
$\mathcal K_t(x,y)\le Cr^{2s-t-n}=Cr^{\alpha-n}$.
This part contributes at most $Cr^{\alpha-n+n/P}$.

Cover the remaining part by boundary balls $B(\xi_j,\rho)$, with
at most $C\theta^{-n}$ balls and $|\xi_j-y|\ge r/2>8\rho$.
To obtain this cover, take a maximal $\rho/4$-separated family
$\{\xi_j\}$ of boundary points with
$r/2\le|\xi_j-y|\le3r$. Volume packing bounds its cardinality by
$C\theta^{-n}$, independently of $r$ and $y$. By decreasing
$\theta$ if necessary, each enlarged patch stays in a boundary
coordinate neighborhood. In each patch
$U=G_s(\cdot,y)$ satisfies the hypotheses of Lemma~\ref{le4.1}.
Furthermore, by separation from
the pole, $U(A_{\xi_j,\rho})\le Cr^{2s-n}$. Therefore
\[
 \int_{D_\rho(\xi_j)}\mathcal K_t(x,y)^P\dd x
 \le C\rho^{n-tP}r^{(2s-n)P}
 \le Cr^{n+(\alpha-n)P}.
\]
Since the number of patches is independent of $r$ and $y$, summing
the estimates for the interior and boundary parts proves
\eqref{equ4.7} at any scale $r\in(0,\,D)$. This finishes
the proof of Lemma \ref{le4.2}.
\end{proof}

\begin{lemma}\label{le4.3}
Let $n\ge2$, $s\in(0,1)$, and $\Omega\subset\R^n$ be a bounded
Lipschitz domain. Suppose that $t\in(s,\,\min\{s+1/2,2s\})$,
$p\in(1,\,P_{t,\Omega})$, $q\in(1,\,\infty)$, and \eqref{e1.9}
holds. Then, for any $f\in L^q(\Omega)$,
\begin{equation}\label{equ4.8}
 \|\G|f|/\delta^t\|_{L^p(\Omega)}\le C\|f\|_{L^q(\Omega)}.
\end{equation}
Here the positive constant $C$ depends only on $n$,
$s$, $t$, $p$, $q$, and $\Omega$.
\end{lemma}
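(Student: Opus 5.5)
The plan is to combine the annular kernel bound of Lemma~\ref{le4.2} with the abstract dual estimate of Lemma~\ref{le2.7}, using the potential order $\alpha=2s-t$. Then I extend from the exact gain line to the whole range \eqref{e1.9} by H\"older's inequality on the bounded domain. Throughout, $f$ is extended by zero outside $\Omega$, so $\G|f|/\delta^t=\int_{\R^n}\mathcal K_t(\cdot,y)|f(y)|\,dy$, where $\mathcal K_t$ is the nonnegative kernel of Lemma~\ref{le4.2}.

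\textbf{Exact gain line.} Fix $p\in(1,P_{t,\Omega})$ and choose $P\in(\max\{2,p\},P_{t,\Omega})$; this is possible since $P_{t,\Omega}>2$. Lemma~\ref{le4.2} then gives \eqref{eq:abstractannular} for $K=\mathcal K_t$ with exponent $\alpha=2s-t\in(0,n)$ and a constant independent of $r$ and $y$. If $1/p=1/q-\alpha/n$, then $q=np/(n+\alpha p)\in(1,n/\alpha)$, because $p>1$ implies $q>n/(n+\alpha)$. To get $q>1$ itself, suppose first that $p$ is large enough that $np/(n+\alpha p)>1$. Lemma~\ref{le2.7} then yields \eqref{equ4.8} directly.

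\textbf{Off the gain line.} Now suppose $1/p>1/q-\alpha/n$. I would pick $\tilde p\in[p,P_{t,\Omega})$ and $\tilde q\le q$ with $\tilde q>1$ and $1/\tilde p=1/\tilde q-\alpha/n$. Then H\"older's inequality on the bounded set $\Omega$ gives
\[
\|\G|f|/\delta^t\|_{L^p(\Omega)}\le C\|\G|f|/\delta^t\|_{L^{\tilde p}(\Omega)}\le C\|f\|_{L^{\tilde q}(\Omega)}\le C\|f\|_{L^q(\Omega)}.
\]
The pair is found as follows. If $1/q-\alpha/n\le 1/P_{t,\Omega}$, it suffices to take $\tilde p$ slightly below $P_{t,\Omega}$ with $1/\tilde p<1-\alpha/n$. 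Then $1/\tilde q=1/\tilde p+\alpha/n$ lies in $(1/q,1)$ whenever $1/\tilde p>1/q-\alpha/n$, which holds automatically if $1/q-\alpha/n<1/P_{t,\Omega}$, that is, when the data are at least as integrable as the endpoint requires. Otherwise we have $1/p\ge1/q-\alpha/n>1/P_{t,\Omega}$, and we take $\tilde p$ with $1/\tilde p=1/q-\alpha/n$ and $\tilde q=q$. Then $\tilde p\ge p$ and $\tilde p<P_{t,\Omega}$, which is the exact-line case above. The range $q\ge n/\alpha$ is absorbed by the first alternative, since $L^q(\Omega)\subset L^{\tilde q}(\Omega)$ for every $\tilde q<q$.

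\textbf{Main obstacle.} The delicate point is bookkeeping rather than analysis. One must make sure that on the exact line the auxiliary exponent $P$ from Lemma~\ref{le4.2} can be chosen strictly between $\max\{2,p\}$ and $P_{t,\Omega}$, which Lemma~\ref{le2.7} needs as $p<P$. One must also ensure the chosen $\tilde q$ is strictly above $1$. When $p$ is small (say $p\le n/(n-\alpha)$), the gain line would force $q\le1$; in that case I use the first alternative, comparing with a larger $\tilde p<P_{t,\Omega}$. Finally, Lemma~\ref{le2.7} is stated on $\R^n$; since the kernel vanishes off $\Omega\times\Omega$, the restriction to $\Omega$ is immediate.
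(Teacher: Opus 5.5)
Your proposal is correct and follows essentially the same route as the paper: on the exact gain line you choose $P\in(\max\{2,p\},P_{t,\Omega})$ and combine Lemma~\ref{le4.2} with Lemma~\ref{le2.7}, and off the line you reduce to an exact-line pair $(\tilde q,\tilde p)$ with $1<\tilde q\le q$ and $p\le\tilde p<P_{t,\Omega}$, then apply H\"older's inequality on the bounded set $\Omega$ on both sides. The only difference is in how the cases are split: you split on whether $1/q-\alpha/n\le 1/P_{t,\Omega}$, whereas the paper splits on whether $p>n/(n-\alpha)$ (lowering the data exponent to $np/(n+\alpha p)$) or $p\le n/(n-\alpha)$ (taking $q_0$ near $1$ and raising the output exponent); both splits are exhaustive and lead to the same conclusion.
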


\begin{proof}
On the exact gain line $1/p=1/q-\alpha/n$, choose
$P\in(\max\{2,p\},\,P_{t,\Omega})$. Lemma~\ref{le4.2}
verifies \eqref{eq:abstractannular} for $K=\mathcal K_t$.
By this and Lemma~\ref{le2.7} with $\alpha=2s-t$, we obtain
\eqref{equ4.8}.

For the remaining exponents, define
$\lambda_\alpha:=n/(n-\alpha)<2$. The strict inequality follows
from $0<\alpha<n/2$.
If $p>\lambda_\alpha$, then take $q_0=np/(n+\alpha p)>1$.
\eqref{e1.9} gives $q\ge q_0$. Then the inclusion
$L^q(\Omega)\subset L^{q_0}(\Omega)$ yields \eqref{equ4.8}. If
$p\in(1,\,\lambda_\alpha]$, choose $1<q_0<q$ sufficiently close to $1$ that
$p_0=nq_0/(n-\alpha q_0)<P_{t,\Omega}$. Then $p_0>\lambda_\alpha\ge p$.
From applying the gain line estimate at $(q_0,p_0)$ and lower the output
Lebesgue exponent, we deduce that \eqref{equ4.8} holds true. This finishes the proof
of Lemma \ref{le4.3}.
\end{proof}

\subsection{Proofs of Theorems \ref{t1.1} and \ref{thm:higher} and Corollary \ref{c1.1}}

\begin{proof}[Proof of Theorem~\ref{t1.1}]
Define $\lambda_s:=n/(n-s)$ and
$q_{\rm e}:=2n/(n+2s)$. 
Since $n>2s$, we have $1<\lambda_s<2$ and $1<q_{\rm e}<n/s$.

We first consider the case that $p\in(\lambda_s,\,p_\Omega^{\rm vol})$.
Set $q_p=np/(n+sp)$. Then $1<q_p<n/s$ and
$1/p=1/q_p-s/n$.
Choose $P\in(\max\{p,2\},\,p_\Omega^{\rm vol})$.
By Lemmas~\ref{le2.6} and \ref{le2.7}, applied to the
kernel $K=\mathcal K_s$ with $\alpha=s$, annular exponent
$P$, and target exponent $p$, we find that
\begin{equation*}
 \|\G|f|/\delta^s\|_{L^p(\Omega)}
 \leq C\|f\|_{L^{q_p}(\Omega)}.
\end{equation*}
From this and Proposition~\ref{prop:lifting}, applied with $t=s$ and
$\gamma=s$, it follows that
\[
 \|\G f\|_{W^{s,p}(\R^n)}
 \leq C\|f\|_{L^{q_p}(\Omega)}.
\]
The index condition \eqref{eq1.5} is equivalent to $q\geq q_p$.
The fact that $\|f\|_{q_p}\leq|\Omega|^{1/q_p-1/q}\|f\|_q$
yields \eqref{eq1.9} in this range.

We next consider $1<p\le\lambda_s$.
Choose $q_0$ and $p_0$ so that
\[
 1<q_0<\min\{q,q_{\rm e}\},\qquad
 p_0=\frac{nq_0}{n-sq_0}.
\]
Then $\lambda_s<p_0<2<p_\Omega^{\rm vol}$. By the estimate already
proved, we find that
\begin{equation}\label{equ4.9}
 \|\G|f|/\delta^s\|_{L^p(\Omega)}
 \leq |\Omega|^{1/p-1/p_0}
       \|\G|f|/\delta^s\|_{L^{p_0}(\Omega)}
 \leq C\|f\|_{L^q(\Omega)}.
\end{equation}
Choose $r$ so that $1<r<\min\{q,p\}$. Then
\[
 0<\gamma_r=n(1/r-1/p)<n(1-1/p)\leq s.
\]
Apply Proposition~\ref{prop:lifting} with $t=s$, data exponent $r$
and target exponent $p$. This, together with \eqref{equ4.9}
and $L^q(\Omega)\subset L^r(\Omega)$, implies
\[
 \|\G f\|_{W^{s,p}(\R^n)}
 \leq C\big(\|f\|_{L^r(\Omega)}
             +\|\G|f|/\delta^s\|_{L^p(\Omega)}\big)
 \leq C\|f\|_{L^q(\Omega)}.
\]
Thus \eqref{eq1.9} holds for all the stated indices.

It remains to verify the weak identity \eqref{eq1.3}.
Let $f\in L^q(\Omega)$ with $1<q<\infty$. For each integer
$j\ge1$, define the bounded truncation
$f_j:=\max\{-j,\min\{f,j\}\}$, and put $u_j:=\G f_j$ and
$u:=\G f$. Then $f_j\to f$ in $L^q(\Omega)$. By
\eqref{eq1.9}, applied to $f_j-f$, we conclude that
\[
 \|u_j-u\|_{W^{s,p}(\R^n)}
 \leq C\|f_j-f\|_{L^q(\Omega)}\longrightarrow0.
\]
For every $\phi\in C_{\rm c}^\infty(\Omega)$, the solution $u_j$ satisfies
\[
 \mathcal E_s(u_j,\phi)
 =\int_\Omega f_j\phi\,dx.
\]
From H\"older's inequality on $\R^n\times\R^n$, the absolute
value of the energy form with $u_j-u$ in place of $u_j$ is at most
\[
 \frac{c_{(n,s)}}2
 [u_j-u]_{W^{s,p}(\R^n)}[\phi]_{W^{s,p'}(\R^n)},
\]
which tends to zero. The right-hand side converges by
$L^q$ convergence and $\phi\in L^{q'}(\Omega)$. Therefore, 
for any function $\phi\in C_{\rm c}^\infty(\Omega)$,
\begin{equation}\label{equ4.10}
 \mathcal E_s(u,\phi)
 =\int_\Omega f\phi\,dx.
\end{equation}
The form $\mathcal E_s(u,\phi)$ is absolutely convergent, and
$u=0$ almost everywhere in $\Omega^c$ by its Green representation.
Since $\Omega$ is bounded, $u\in W^{s,p}(\R^n)$ satisfying that $u=0$ on $\boz^c$ also belongs to
$L^1(\R^n)$. By Remark~\ref{re1.1}, \eqref{equ4.10} is equivalent to
\eqref{eq1.3}.
This completes the proof of Theorem \ref{t1.1}.
\end{proof}

\begin{proof}[Proof of Theorem~\ref{thm:higher}]
Let $\alpha:=2s-t$. Lemma~\ref{le4.3} gives \eqref{eq:GQ}.
If $q<p$, apply this and Proposition~\ref{prop:lifting} directly. If $q\ge p$,
choose $q_0\in(1,p)$ sufficiently close to $p$ so that
$0<n(1/q_0-1/p)<\alpha$, and use
$\|f\|_{L^{q_0}(\Omega)}\le |\Omega|^{1/q_0-1/q}\|f\|_{L^q(\Omega)}$.
The quotient remains in the same target $L^p$ space. 
Proposition~\ref{prop:lifting} proves the full norm estimate.
To verify the weak equation, first take bounded truncations
$f_j\to f$ in $L^q(\Omega)$ when $q<\infty$. The preceding
estimate gives $\G f_j\to\G f$ in $W^{t,p}(\R^n)$, hence also
in $W^{s,p}(\R^n)$ because $t\ge s$. By the bounded-data Green
identity and Fubini's theorem, each $\G f_j$ satisfies the
weak identity against $\phi\in C_{\rm c}^\infty(\Omega)$.
H\"older's inequality permits passage to the limit in the energy
form, while $L^q$ convergence permits passage to the limit on the
right-hand side. This proves the asserted weak equation and finishes the proof
of Theorem~\ref{thm:higher}.
\end{proof}

\begin{proof}[Proof of Corollary~\ref{c1.1}]
Let $\alpha:=2s-t$ and
\[
 Q_*:=\frac{nP_{t,\Omega}}{n+\alpha P_{t,\Omega}}.
\]
Since $z\mapsto nz/(n+\alpha z)$ is strictly increasing and
$P_{t,\Omega}>P_0(n,s,t)>2$, we have
$1<Q_0(n,s,t)<Q_*<n/\alpha$. Choose
$\eta:=(Q_*-Q_0(n,s,t))/2>0$. For
$q\in(1,\,Q_0(n,s,t)+\eta)$, the exponent
$p_q=nq/(n-\alpha q)$ satisfies $p_q<P_{t,\Omega}$.
Since $\frac1{p_q}=\frac1q-\frac{2s-t}{n}$,
Theorem~\ref{thm:higher} applies. At $t=s$, this also proves
\eqref{eq1.9} and finishes the proof
of Corollary~\ref{c1.1}.
\end{proof}

\section{Optimality of the universal exponents}\label{s5}
We prove Theorem~\ref{thm:optimal} with $s$ and $t$ fixed.
For each domain $O\subset\R^n$, write $\mathcal G_{s,O}$ for
its Green operator and define the boundary distance by
$\delta_O(x):=\dist(x,\partial O)$, for any $x\in O$. If
$n>1+2s$, we construct a Lipschitz graph above a regular Cantor
set of dimension slightly below $n-1-2s$. If $n=2$ and $s\ge1/2$,
we instead use homogeneous $s$-harmonic function in sectors. In both cases,
Lemma~\ref{le3.3} converts divergence of $u/\delta^t$
into failure of global $W^{t,p}$ regularity, including $t=1$
and $t>1$.

\subsection{The Cantor-set counterexample}
Throughout this subsection, assume $n>1+2s$ and
$0<d<n-1-2s$. Set
\begin{equation*}
 \kappa:=n-1-2s-d>0.
\end{equation*}
Choose a compact $d$-Ahlfors regular set
$E\subset(-1/8,1/8)^{n-1}$. By this we mean that there are a
finite positive measure $\mu$ supported on $E$, constants
$c_E,C_E\in(0,\,\fz)$, and a radius $r_E\in(0,\,\fz)$ such that, for any
$e\in E$ and $r\in(0,\,r_E)$,
\[
 c_E r^d\le\mu(B(e,r))\le C_E r^d.
\]
Such sets exist for every stated $d$. If $d$ is not an integer, write $d=m+\theta$, where
$m:=\lfloor d\rfloor$ the greatest integer less than $d$ and $\theta\in(0,\,1)$. Take the product of
an $m$-dimensional cube with the two-branch Cantor set of
contraction ratio $2^{-1/\theta}$, add zero coordinates to
embed it in $\R^{n-1}$, and rescale. For integer $d$, use a $d$-dimensional
cube. The product measure has the displayed bounds.
Set $F:=E\times(-\infty,0]$. For any $e\in E$, write
$\xi_e:=(e,0)$.
The set $F$ is used only for estimates and is not the exterior of
the final Lipschitz domain.

For $h>0$, define the open $h$-neighborhood by setting
$$F_h:=\{x\in\R^n:\dist(x,F)<h\}$$
and
$$E_h:=\{x'\in\R^{n-1}:\dist(x',E)<h\}.$$

\begin{lemma}[Covering and volume estimates]\label{le5.1}
There is a positive constant $C$, depending only on
$n,\,d,\,c_E$, and $C_E$, such that the following assertions hold.
\begin{itemize}
 \item[\rm (i)] For any
$e\in E$ and $0<\rho<R<r_E/10$, the set
$F\cap B(\xi_e,R)$ is covered by at most
$C(R/\rho)^{d+1}$ balls of radius $\rho$ centered on $F$.
\item[\rm (ii)]For any
$e\in E$ and $0<h<R<r_E/20$,
\begin{equation}\label{equ5.1}
 |F_h\cap B(\xi_e,R)|\le C R^{d+1}h^{n-d-1}.
\end{equation}

\item[\rm (iii)]For any $0<h<r_E/20$,
\begin{equation}\label{equ5.2}
 c h^{n-1-d}\le |E_h|\le C h^{n-1-d},
\end{equation}
where the constants $c$ and $C$ depends on $E$.
\end{itemize}
\end{lemma}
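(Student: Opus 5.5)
All three parts follow from the Ahlfors regularity of $\mu$ together with standard packing arguments; the product structure $F=E\times(-\infty,0]$ adds one dimension.

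For (i), take a maximal $\rho/2$-separated family $\{e_i\}$ in $E\cap B(e,2R)\subset\R^{n-1}$. The balls $B(e_i,\rho/4)$ are disjoint and contained in $B(e,3R)$, and $3R<r_E$. Comparing $\sum_i\mu(B(e_i,\rho/4))\ge N c_E(\rho/4)^d$ with $\mu(B(e,3R))\le C_E(3R)^d$ gives $N\le C(R/\rho)^d$. Any point of $F\cap B(\xi_e,R)$ has the form $(x',x_n)$ with $x'\in E\cap B(e,R)$ and $x_n\in(-R,0]$. Choose $e_i$ with $|x'-e_i|<\rho/2$, and choose $x_n$ within $\rho/2$ of a grid point $-k\rho/2$, where $0\le k\le 2R/\rho$. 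Then $(e_i,-k\rho/2)\in F$ is within $\rho$ of $x$. This yields at most $C(R/\rho)^{d}\cdot(R/\rho+1)\le C(R/\rho)^{d+1}$ balls, since $\rho<R$.

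For (ii), apply (i) with $\rho=h$ to the set $F\cap B(\xi_e,R+h)\subset F\cap B(\xi_e,2R)$; the condition $2R<r_E/10$ holds because $R<r_E/20$. Any $x\in F_h\cap B(\xi_e,R)$ lies within $h$ of a point of $F\cap B(\xi_e,R+h)$, hence within $2h$ of one of the ball centres. So $F_h\cap B(\xi_e,R)$ is covered by at most $C(R/h)^{d+1}$ balls of radius $2h$, each of volume $Ch^n$. Multiplying gives $CR^{d+1}h^{n-d-1}$, which is \eqref{equ5.1}.

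For (iii), use the same packing argument in $\R^{n-1}$ at scale $h$ over all of $E$. The set $E$ is compact, so $\mu(E)<\infty$; it is also nonempty and lies in $(-1/8,1/8)^{n-1}$. Take a maximal $h$-separated family $\{e_i\}$ in $E$. The balls $B(e_i,h/2)$ are disjoint, so $Nc_E(h/2)^d\le\mu(E)$, and hence $N\le Ch^{-d}$. The sets $B(e_i,2h)$ cover $E_h$, which gives $|E_h|\le Ch^{-d}h^{n-1}$.

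For the lower bound, maximality means that the balls $B(e_i,h)$ cover $E$. Then $\mu(E)\le\sum_i\mu(B(e_i,h))\le NC_Eh^d$, so $N\ge ch^{-d}$, with $c$ depending on $\mu(E)$. The balls $B(e_i,h/2)$ are disjoint and contained in $E_h$, so $|E_h|\ge Nch^{n-1}\ge ch^{n-1-d}$.

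The only delicate step is this global lower bound. It needs $\mu(E)>0$ and the radius restriction $h<r_E/20$, and it is the reason the constants in \eqref{equ5.2} depend on $E$ (through $\mu(E)$) and not merely on $c_E$ and $C_E$.
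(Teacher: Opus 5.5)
Your proof is correct and follows essentially the same route as the paper's. You use Ahlfors packing for the $E$-factor, a vertical grid of $O(R/\rho)$ points for (i), enlargement of the covering balls for (ii), and a maximal $h$-separated family for both bounds in (iii). The differences are cosmetic: $\rho/2$-separation removes the paper's final ball enlargement in (i), and you use radius $h/2$ instead of $h/3$. Note one inaccuracy in your closing remark: your upper bound in (iii) also depends on $\mu(E)$, not only the lower bound. This is harmless, since the statement lets both constants in \eqref{equ5.2} depend on $E$.
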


\begin{proof}
From the lower measure bounds on balls and the upper bound on $B(e,3R)$,
it follows that a maximal $\rho$-separated family in $E\cap B(e,2R)$ has at most
$C(R/\rho)^d$ members.
Taking the product of this covering with a covering of
$[-R,0]$ by at most $C R/\rho$ intervals gives (i) after a fixed enlargement of the balls. 
Covering $F\cap B(\xi_e,2R)$ with balls with radius $h$
and enlarging the balls proves \eqref{equ5.1}.
The same argument applied to all of $E$ yields the upper estimate
in \eqref{equ5.2}. For the lower one, a maximal $h$-separated
family must have at least $c h^{-d}$ members, since its enlarged
balls cover $E$ and each has measure at most $C_Eh^d$.
The disjoint Euclidean balls of radius $h/3$ around those members
are contained in $E_h$ and have total volume at least
$c h^{-d}h^{n-1}$. This finishes the proof of Lemma \ref{le5.1}.
\end{proof}

Let $I_{2s}$ denote the Riesz potential, normalized so that
$(-\Delta)^sI_{2s}\psi=\psi$ for smooth compactly supported
$\psi$. Assume that
$0\le\psi\in C_{\rm c}^\infty(B_2)$ with $\psi\ge1$ on $B_1$.
For a center $z\in\R^n$ and a radius $\rho>0$, take a
sufficiently large positive constant $A$ and define the
single-ball barrier by
\begin{equation}\label{equ5.3}
 W_{z,\rho}(x):=A\rho^{-2s}
 I_{2s}\big[\psi((\cdot-z)/\rho)\big](x).
\end{equation}
By positivity of the Riesz kernel, we find that, for any $x\in\R^n$,
\begin{equation*}
 W_{z,\rho}(x)\ge0.
\end{equation*}
Scaling and the choice of $A$ give, for any $x\in B(z,\rho)$,
\begin{equation}\label{equ5.4}
 W_{z,\rho}(x)\ge1.
\end{equation}
From the normalization of $I_{2s}$, it follows that, for any $x\in\R^n$,
\begin{equation}\label{equ5.5}
 (-\Delta)^sW_{z,\rho}(x)
       =A\rho^{-2s}\psi((x-z)/\rho)\ge0.
\end{equation}
Finally, integration of the Riesz kernel yields, whenever
$|x-z|\ge4\rho$,
\begin{equation}\label{equ5.6}
 W_{z,\rho}(x)\le C
       \left(\frac{\rho}{|x-z|}\right)^{n-2s}.
\end{equation}
Thus, each barrier is smooth and bounded, with an integrable fractional
tail. 

We now construct a fixed Lipschitz graph.
For a parameter $M\ge1$ and every $x'\in\R^{n-1}$,
define the graph function by
\begin{equation}
 \phi_M(x'):=\max\{-1,-M\dist(x',E)\}.
\end{equation}
The associated bounded epigraph domain is
\begin{equation}
 \Omega_M:=\{(x',x_n):x'\in(-2,2)^{n-1},\
                    \phi_M(x')<x_n<1\}.
\end{equation}
The graph is $M$-Lipschitz. Near the lateral sides of the box its
value is $-1$, so the remaining boundary consists of ordinary
box corners. Thus $\Omega_M$ is a bounded connected Lipschitz domain.
In particular $(e,0)\in\partial\Omega_M$ for every $e\in E$.
Let $u_M:=\mathcal G_{s,\Omega_M}1$. Then
$u_M$ is bounded, nonnegative, continuous and strictly positive in
the interior.

\begin{lemma}[Slow decay]\label{le5.2}
For any $\nu\in(0,\,\fz)$, there exist positive constants $M\in[1,\,\fz)$,
$K\in[4,\,\fz)$, $\varepsilon\in(0,1/4)$, and
$r_{\rm init},\,c\in(0,\,\fz)$, depending only on $n,\,s,\,\nu,$ and $E$, such that, for any $j\ge0$ and any
$x:=(x',x_n)$ satisfying $\dist(x',E)<r_j:=r_{\rm init}(2K)^{-j}$ and
$r_j\le x_n\le2r_j$,
\begin{equation}\label{eq5.9}
 u_M(x)\ge c r_j^\nu.
\end{equation}
\end{lemma}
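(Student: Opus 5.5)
We argue by induction on scales, with a multiplicative loss that is only a fixed constant per scale. The point of the construction is that the exterior of $\Omega_M$ near the Cantor set is thin. Near $\xi_e$ it consists of the set $\{x_n\le\phi_M(x')\}$. Since $\phi_M(x')=-M\dist(x',E)$, this set is contained in a neighbourhood of $F$ that narrows like a cone of slope $1/M$ around $F=E\times(-\infty,0]$. The set $F$ has dimension $d+1<n-2s$, so it is polar for the $2s$-stable process, and its neighbourhoods carry little fractional capacity. The plan is therefore to show the following: at each scale $r_j$, a definite fraction of the value of $u_M$ on the "upper shell" $\{r_j\le x_n\le2r_j,\ \dist(x',E)<r_j\}$ survives to the next shell at scale $r_{j+1}=r_j/(2K)$. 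Concretely, we aim for the inequality
\[
 \inf_{S_{j+1}}u_M\ \ge\ (2K)^{-\nu}\inf_{S_j}u_M ,
\]
where $S_j$ denotes the shell in \eqref{eq5.9}. Iterating this, starting from a positive lower bound on $S_0$ (which holds for suitable $r_{\rm init}$ by interior positivity and continuity of $u_M$), gives $u_M\ge c(2K)^{-j\nu}=c\,(r_j/r_{\rm init})^\nu$, which is \eqref{eq5.9}.

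For the one-step estimate, fix $j$ and set $m_j:=\inf_{S_j}u_M$. We build a subsolution $w$ in the region $\Omega_M\cap B(\xi_e,Cr_j)$ for a point $e$ near $x'$.

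First, we take a positive $s$-harmonic comparison function which equals $m_j$ on $S_j$ and tends to zero at most mildly near $F$. A natural choice is $m_j(1-\Sigma)$, where $\Sigma$ is a sum of single-ball barriers $W_{z,\rho}$ from \eqref{equ5.3}. The balls $B(z,\rho)$ are those from Lemma~\ref{le5.1}(i), covering $F\cap B(\xi_e,r_j)$ at radius $\rho=r_j/(2K)$, and they are enlarged so that they contain the exterior cone portion there. This containment uses the fact that the exterior near $F$ lies within distance $\sim x_n$-level$/M$ of $F$. Hence, for $M$ large, at scale $\rho$ the exterior is inside $F_{\rho}$.

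Since each $W_{z,\rho}\ge1$ on its ball by \eqref{equ5.4}, $w\le0$ on the exterior part. By \eqref{equ5.5}, $(-\Delta)^sw\le0$ away from the balls, while $(-\Delta)^su_M=1\ge0$. The comparison principle then gives $u_M\ge w$ on the next shell, provided $w$ is arranged to lie below $u_M$ outside the ball: there we have $u_M\ge0$, and on the shell $S_j$ we have $u_M\ge m_j$, so $w$ must be cut off appropriately there.

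Next, we evaluate $\Sigma$ on $S_{j+1}$ using \eqref{equ5.6} together with the covering count. Points of $S_{j+1}$ lie at height $\sim\rho$, so nearby balls contribute $O(1)$. This is the obstruction: a single scale step cannot have $\Sigma$ small on $S_{j+1}$ itself. Hence we instead use radius $\rho'=\varepsilon r_{j+1}$ much smaller than the height of $S_{j+1}$.

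The contribution then becomes a dyadic sum:
\[
 \sum_k (\text{number of balls at distance }\sim2^k r_{j+1})\,(\rho'/2^kr_{j+1})^{n-2s}
 \ \lesssim\ \sum_k 2^{k(d+1)}\varepsilon^{-(d+1)}\cdot\varepsilon^{n-2s}2^{-k(n-2s)} .
\]
Here we count covering balls of radius $\rho'$ in $F\cap B(\xi,2^kr_{j+1})$, which by Lemma~\ref{le5.1}(i) number at most $C(2^k/\varepsilon)^{d+1}$. The series converges since $d+1<n-2s$, and its value is $\lesssim\varepsilon^{\kappa}$ with $\kappa=n-1-2s-d>0$, summed up to $k\sim\log K$. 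Choosing $\varepsilon$ small makes $\Sigma\le1/2$ on $S_{j+1}$. Choosing $M$ large (depending on $\varepsilon$ and $K$) guarantees that the exterior within $B(\xi_e,CKr_{j+1})$ and below height $r_{j+1}$ lies in the $\rho'$-neighbourhood of $F$.

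The main obstacle is controlling the loss from the boundary data outside $B(\xi_e,Cr_j)$ and the dependence on $K$. The comparison only yields $u_M\ge m_j\cdot c_0$ with a constant $c_0$ coming from the Poisson/harmonic-measure mass of the shell $S_j$ seen from $S_{j+1}$. That mass is bounded below by a constant $c_0(K)$, roughly $K^{-2s}$ or $K^{-n}$ from the tail of the Poisson kernel. We then need $c_0\ge(2K)^{-\nu}$, which fails for small $\nu$ if it is only polynomial in $K$.

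We therefore plan to use all intermediate shells rather than only $S_j$. We replace the shell by the full column $\{\dist(x',E)<r_j,\ r_{j+1}\lesssim x_n\le2r_j\}$, where an interior Harnack chain along the vertical direction loses only a fixed constant per dyadic level. This costs $C^{\log_2 K}=K^{\log_2C}$, which is still polynomial. The cure is to run the barrier argument at each dyadic level with the slack $\varepsilon^\kappa$ absorbed. The per-level loss is then $1-C\varepsilon^{\kappa}$ instead of a Harnack constant, because the comparison function $m(1-\Sigma)$ is nearly flat when $\Sigma$ is small: the ambient $u_M$ behaves like an $s$-harmonic function in a domain whose exterior is polar-like. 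Over $\log_2(2K)$ levels this gives a total factor $(1-C\varepsilon^\kappa)^{\log_2 2K}\ge(2K)^{-\nu}$ once $C\varepsilon^\kappa\ll\nu$. Fixing $\varepsilon$ first, then $K$, then $M$ and $r_{\rm init}$ determines the constants.
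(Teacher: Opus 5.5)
There is a genuine gap, and it is the obstruction you identify yourself in your last two paragraphs; the proposed cure does not remove it. In any comparison argument, the subsolution $w$ must lie below $u_M$ off the comparison region. There you only know $u_M\ge m$ on the shell or column above the current level, and otherwise only $u_M\ge0$, with $u_M=0$ on $\Omega_M^c$. So $m(1-\Sigma)$ is not admissible as it stands. Once it is cut off, its value at the next level is at most $m$ times the $s$-harmonic measure (Poisson mass) of the set where $u_M\ge m$ is known, minus $m\Sigma$.

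The per-level loss is therefore the harmonic-measure deficit of the known set plus $\sup\Sigma$, and your argument controls only the second term. Take a step whose comparison region has size comparable to the current level, with the known set equal to the column above that level. Then the deficit is a fixed fraction bounded away from $0$, independent of $\varepsilon$ and $M$. Repeating this over $\log_2(2K)$ levels reproduces exactly the polynomial loss in $K$ you wanted to avoid. The near-flatness of $1-\Sigma$ plays no role in that deficit, so the bound $(1-C\varepsilon^\kappa)^{\log_2 2K}$ is unjustified.

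The missing idea is to run the induction on a set that carries almost all of the Poisson mass. The paper sets $m_j:=\inf\{u_M(y): e\in E,\ y\in\mathcal R(e,r_j)\}$, where $\mathcal R(e,r)=\{r\le|y-\xi_e|\le Kr,\ \dist(y,F)\ge\varepsilon r\}$. This set surrounds $\xi_e$ in every direction, including the part of $\Omega_M$ below $\{x_n=0\}$ away from $F$, and it lies in $\Omega_M$ once $M>K/\varepsilon$.

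With $R=2Kr$, the explicit Poisson kernel of $B(\xi_e,R)$ shows that the $s$-harmonic function with exterior data $\one_{\mathcal R(e,R)}$ is at least $1-\eta$ on $B(\xi_e,R/2)$. The three pieces of missing mass are bounded as follows:
\begin{itemize}
\item the far region beyond $KR$ contributes $O(K^{-2s})$;
\item the thin shell $R<|y-\xi_e|<(1+\tau)R$ contributes $O(\tau^{1-s})$;
\item the neighborhood $F_{\varepsilon R}$ contributes $O(\tau^{-s}K^{d+1}\varepsilon^{n-d-1})$, by Lemma~\ref{le5.1}(ii).
\end{itemize}
Then compare $u_M$ with $m_{j-1}(h_{e,R}-\mathcal W)$, where the barrier $\mathcal W$ is $\ge1$ on $\Omega_M^c\cap B(\xi_e,R)$ and $\le\eta$ on $\mathcal R(e,r)\subset B(\xi_e,R/2)$; your dyadic barrier estimate would work here too. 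This gives $m_j\ge(1-2\eta)m_{j-1}$ for each scale ratio $2K\ge8$, hence the exponent $-\log(1-2\eta)/\log(2K)<\nu$. Finally, the set in \eqref{eq5.9} is contained in $\mathcal R(e,r_j)$ for a nearest $e\in E$, so no separate shell-to-shell transfer is needed.
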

\begin{proof}
Choose $\eta\in(0,1/4)$ so small that
\begin{equation}\label{eq5.10}
 -\log(1-2\eta)/\log8<\nu.
\end{equation}
For any $K\in[4,\,\fz)$, $\varepsilon\in(0,1/4)$, $e\in E$,
and $r\in(0,\,\fz)$, define the good annulus by
\[
 \mathcal R(e,r):=\{y:r\le|y-\xi_e|\le Kr,
                         \dist(y,F)\ge\varepsilon r\}.
\]
Take $M\in(K/\varepsilon,\,\fz)$ and $r\in(0,\,\frac1{4K})$, these sets are contained in $\Omega_M$.
Indeed, for a point $y\in\Omega_M^c$,
$y_n\le0$ and $\dist(y',E)\le |y_n|/M\le Kr/M<\varepsilon r$.

\emph{Step 1: the Poisson mass of a good annulus.}
Let $R\in(0,r_E/20)$ such that $KR$ is sufficiently small compared with
$r_E$ and $B:=B(\xi_e,R)$, and let $h_{e,R}$ be the
$s$-harmonic function in $B$ with exterior data
$\one_{\mathcal R(e,R)}$. By the Poisson representation for a ball
(see, for instance, \cite[Section~2, p.~3]{M06}), for
any $x\in B$ and $y\notin\overline B$, its kernel is
\[
 P_B(x,y)=\frac{\Gamma(n/2)\sin(\pi s)}{\pi^{n/2+1}}
 \frac{(R^2-|x-\xi_e|^2)^s}
      {(|y-\xi_e|^2-R^2)^s|x-y|^n}.
\]
Let $\tau\in(0,\,K-1)$. Cover $B^c\setminus\mathcal R(e,R)$ by
the far region $|y-\xi_e|>KR$, the thin shell
$R<|y-\xi_e|<(1+\tau)R$, and the remaining part of $F_{\varepsilon R}$.
For any $x$ satisfying $|x-\xi_e|\le R/2$,
the decay of the Poisson kernel provides the far-region estimate
\begin{equation}\label{eq5.11}
 \int_{|y-\xi_e|>KR}P_B(x,y)\dd y\le C K^{-2s},
\end{equation}
integrating the kernel across the thin shell yields
\begin{equation}\label{eq5.12}
 \int_{R<|y-\xi_e|<(1+\tau)R}P_B(x,y)\dd y
       \le C\tau^{1-s},
\end{equation}
and the kernel bound and \eqref{equ5.1} imply
\begin{equation}\label{eq5.13}
 \int_{\substack{(1+\tau)R\le|y-\xi_e|\le KR\\y\in F_{\varepsilon R}}}
 P_B(x,y)\dd y
       \le C\tau^{-s}K^{d+1}\varepsilon^{n-d-1}.
\end{equation}
Here the positive constant $C$ depends only on $n,\,s,\,d$, and $E$. Choose $K$ large in
\eqref{eq5.11}, then $\tau$ small in
\eqref{eq5.12}, and finally $\varepsilon$ small in
\eqref{eq5.13}. Then each right-hand side in \eqref{eq5.11}, \eqref{eq5.12}, and
\eqref{eq5.13} is at most
$\eta/3$. Since the full
Poisson mass is 1,
for any $x\in B(\xi_e,R/2)$, we obtain
\begin{equation}\label{eq5.14}
 h_{e,R}(x)\ge1-\eta.
\end{equation}

\emph{Step 2: a small barrier for the actual exterior inside the ball.}
Let $\Lambda:=2K$, and choose $r_{\rm init}$ so small that
$Kr_{\rm init}<\min\{1/4,r_E/100\}$. If
$R=\Lambda r\le r_{\rm init}$, then
\[
 \Omega_M^c\cap B(\xi_e,R)
 \subset\{y:y_n\le0,\ \dist(y',E)\le R/M\}.
\]
Take $\rho:=R/M$. For any
$y=(y',y_n)\in\Omega_M^c\cap B(\xi_e,R)$, choose $e_y\in E$ with
$|y'-e_y|=\dist(y',E)$. Then $(e_y,y_n)\in F$ satisfies
$|y-(e_y,y_n)|\le\rho$ and belongs to
$F\cap B(\xi_e,2R)$. By Lemma~\ref{le5.1}, $F\cap B(\xi_e,2R)$
is covered by at most $C(R/\rho)^{d+1}=CM^{d+1}$ balls
$B(z_i,\rho)$ with centers $z_i\in F$. Enlarging these balls to
$B(z_i,2\rho)$ therefore covers $\Omega_M^c\cap B(\xi_e,R)$.
Choose one such covering for each $e$, $R$, and $M$. Using the
single-ball barriers in \eqref{equ5.3}, define the
corresponding aggregate barrier by
\[
 \mathcal W_{e,R}^{M}:=\sum_i W_{z_i,2\rho}.
\]
From \eqref{equ5.4}, it follows that $\mathcal W_{e,R}^{M}\ge1$
on $\Omega_M^c\cap B(\xi_e,R)$. By \eqref{equ5.5}, we have that
 $(-\Delta)^s\mathcal W_{e,R}^{M}\ge0$ globally. If
$x\in\mathcal R(e,r)$, then, for any center $z_i$,
$|x-z_i|\ge\dist(x,F)\ge\varepsilon r$.
Taking $M$ large enough that $8R/M<\varepsilon r$,
\eqref{equ5.6} implies that, for any $x\in\mathcal R(e,r)$,
\begin{equation}\label{eq5.15}
 \mathcal W_{e,R}^{M}(x)\le C M^{d+1}
                \left(\frac{R/M}{\varepsilon r}\right)^{n-2s}.
\end{equation}
Using \eqref{eq5.15} together with $R:=\Lambda r$
and $\kappa:=n-1-2s-d$, we obtain
\begin{equation}\label{eq5.16}
 \mathcal W_{e,R}^{M}(x)
       \le C(\Lambda/\varepsilon)^{n-2s}M^{-\kappa}.
\end{equation}
Since $\kappa>0$, we can choose $M\in(K/\varepsilon,\,\fz)$ large enough such that, for any $x\in\mathcal R(e,r)$, $\mathcal W_{e,R}^{M}(x)\le\eta$. The choice is
independent of $e$ and $r$.

\emph{Step 3: iteration of the comparison estimate.}
Fix this $M$ and abbreviate $u:=u_M$. Let
$r_j:=r_{\rm init}\Lambda^{-j}$
and
\[
 m_j:=\inf\{u(x):e\in E,\ x\in\mathcal R(e,r_j)\}.
\]
By the facts that $\mathcal R(e,r_0)$ is  compact and $\mathcal R(e,r_0)\subset\Omega_M$, we find that $m_0>0$. For $j\ge1$, set $r:=r_j$ and
$R:=r_{j-1}$. Comparing $u$ with
\[
 v:=m_{j-1}(h_{e,R}-\mathcal W_{e,R}^{M}),
\]
we obtain that $(-\Delta)^sv\le0<1=(-\Delta)^su$ in $\Omega_M\cap B(\xi_e,R)$ and
$v\le m_{j-1}\one_{\mathcal R(e,R)}\le u$ outside the ball $B(\xi_e,R)$. In the part of
$\Omega_M^c$ inside the ball $B(\xi_e,R)$, $h_{e,R}\le1\le\mathcal W_{e,R}^{M}$, so
$v\le0=u$. The comparison principle therefore yields $u\ge v$ on $\mathbb{R}^n$.

If $x\in\mathcal R(e,r)$, then $|x-\xi_e|\le Kr=R/2$.
Equations \eqref{eq5.14} and \eqref{eq5.16} imply
\begin{equation}\label{eq5.17}
 m_j\ge(1-2\eta)m_{j-1}.
\end{equation}
By iterating \eqref{eq5.17}, we find that
\begin{equation}\label{eq5.18}
 m_j\ge m_0(1-2\eta)^j.
\end{equation}
Define the decay exponent
$\nu_0:=-\log(1-2\eta)/\log\Lambda$. Since
$\Lambda=2K\ge8$, \eqref{eq5.10} gives $\nu_0<\nu$.
Since $r_j=r_{\rm init}\Lambda^{-j}$, \eqref{eq5.18} implies
$m_j\ge c r_j^{\nu_0}\ge c' r_j^\nu$ after adjusting the
constant for $r_{\rm init}$.
Finally, if $\dist(x',E)<r_j$ and $r_j\le x_n\le2r_j$,
choose a nearest $e\in E$. Then
$r_j\le|x-\xi_e|\le\sqrt5r_j<Kr_j$ and
$\dist(x,F)\ge r_j>\varepsilon r_j$. Thus
$x\in\mathcal R(e,r_j)$, which proves \eqref{eq5.9}.
This finishes the proof of Lemma \ref{le5.2}.
\end{proof}

We can now choose the dimension of $E$ and the decay exponent
to obtain the required failure of Sobolev regularity.
\begin{proof}[Proof of Theorem~\ref{thm:optimal} when $n>1+2s$]
Let $p\in((2s+1)/t,\,\fz)$ and define the positive gap
$\iota:=tp-(1+2s)$. Choose
$\kappa\in(0,\,\min\{n-1-2s,\iota/4\})$. Let
$d:=n-1-2s-\kappa>0$ and $\nu:=\iota/(4p)$.
Then
\begin{equation*}
 n-d-p(t-\nu)=\kappa-3\iota/4<0.
\end{equation*}
Choose the regular set $E$ and the domain $\Omega_M$ as above,
and let $u:=\mathcal G_{s,\Omega_M}1$. For any $j\ge0$, define
\[
 T_j:=\{(x',x_n):\dist(x',E)<r_j,\ r_j<x_n<2r_j\}.
\]
These sets are pairwise disjoint and contained in $\Omega_M$.
By \eqref{equ5.2}, their volumes satisfy
\begin{equation}\label{eq5.19}
 |T_j|\ge c r_j^{n-d}.
\end{equation}
Then, from Lemma~\ref{le5.2}, it follows that, for any $x\in T_j$,
\begin{equation}\label{eq5.20}
 u(x)\ge c r_j^\nu.
\end{equation}
Here the positive constant $c$ may depend on $E,n,s,p,t,M$, but is
independent of $j$. Moreover,
for any $x\in T_j$,
\begin{equation}\label{eq5.21}
 \delta_{\Omega_M}(x)\le\sqrt5r_j.
\end{equation}
 Combining \eqref{eq5.19},
\eqref{eq5.20}, and \eqref{eq5.21}, we obtain
\begin{equation}\label{eq5.22}
 \int_{\Omega_M}\left|\frac{u(x)}{\delta_{\Omega_M}(x)^t}\right|^pdx
 \ge c\sum_{j=0}^\infty r_j^{n-d-p(t-\nu)}=\infty.
\end{equation}
If $u$ belonged to $W^{t,p}(\R^n)$, Lemma~\ref{le3.3}
would contradict \eqref{eq5.22}. This proves the
claim with right-hand side exactly one. The construction covers
all $n\ge3$, $s\in(0,1)$, and also $n=2$, $s<1/2$. This finishes the
proof of Theorem~\ref{thm:optimal} when $n>1+2s$.
\end{proof}

\subsection{The planar sector counterexample}

\begin{proof}[Proof of Theorem~\ref{thm:optimal} when $n=2$ and $s\in[1/2,\,1)$]
Let $\mathcal C_\Theta$ be the sector with half-opening
$\Theta\in(\pi/2,\pi)$, and let $U_\Theta$ be a positive
$s$-harmonic function on sector $\mathcal C_\Theta$ satisfying $U_\Theta\equiv0$ outside the sector. 
Denote its homogeneity degree by
$\gamma_\Theta:=\gamma_{2,s}(\Theta)\in(0,s)$.
By the sector homogeneity theorem (see, for instance,
\cite[Theorem~3.2]{M06}), as $\Theta\to\pi^-$,
\begin{equation}\label{eq5.23}
 \gamma_{2,s}(\Theta)\longrightarrow s-\frac12.
\end{equation}
By \eqref{eq5.23}, as $\Theta\to\pi^-$,
\begin{equation}\label{eq5.24}
 \frac2{t-\gamma_{2,s}(\Theta)}
 \longrightarrow\frac4{1+2(t-s)}.
\end{equation}
At $s=1/2$, the limit in \eqref{eq5.24}
equals $(2s+1)/t$. By \eqref{eq5.24}, for any
$p>P_0(2,s,t)$, we can choose a proper
sector for which $p(t-\gamma_\Theta)>2$.

Let $\chi\in C_{\rm c}^\infty(B_2)$ be nonnegative and radial satisfying $\chi=1$ on
$B_1$. Set $\Omega:=\mathcal C_\Theta\cap B_4$ and
$V:=\chi U_\Theta$.
The domain $\Omega$ is bounded and Lipschitz. From \cite[formula~(2-2)]{TTV2018},
it follows that, for any $x\in\mathcal C_\Theta$,
\[
 U_\Theta(x)\sim |x|^{\gamma_\Theta-s}
                 \dist(x,\mathcal C_\Theta^c)^s.
\]
The function $U_\Theta$ is
$C^s$ on closed annuli, including across the lateral boundary.
Indeed, its angular trace is $C^s$ on the unit circle, and
homogeneity transfers this regularity to annuli away from the
vertex (see, for instance, \cite[Section~5]{TTV2018}). By the energy estimate (see, for instance,
\cite[Proposition~2.7]{TTV2018}), $V\in W^{s,2}(\R^2)$.
We verify that $f:=(-\Delta)^sV|_\Omega$ is bounded.
From the harmonicity of $U_\Theta$, it follows that,
for $x\in \mathcal C_\Theta\cap B_{1/2}$, 
\[
 f(x)=c_{(2,s)}\int_{\R^2}
       \frac{(1-\chi(y))U_\Theta(y)}{|x-y|^{2+2s}}\,dy.
\]
Since $\gamma_\Theta<2s$, the integral converges, and
\[
 |f(x)|\le C\left(1+\int_2^\infty r^{\gamma_\Theta-2s-1}\,dr\right)
 \le C.
\]
On the remaining annulus $1/2\le|x|\le4$,
\begin{align*}
 f(x)={}&U_\Theta(x)(-\Delta)^s\chi(x)\\
 &-c_{(2,s)}\int_{\R^2}
 \frac{(\chi(x)-\chi(y))(U_\Theta(x)-U_\Theta(y))}
 {|x-y|^{2+2s}}\,dy.
\end{align*}
Near the diagonal, by the Lipschitz bound for $\chi$ and the annular
$C^s$ bound for $U_\Theta$, we find that
\[
 \int_{|x-y|<1/8}
 \frac{|\chi(x)-\chi(y)|\,|U_\Theta(x)-U_\Theta(y)|}
 {|x-y|^{2+2s}}\,dy
 \le C\int_0^{1/8}r^{-s}\,dr<\infty.
\]
The separated part is bounded by the same integrable tail as above.
Combining the estimates near the vertex and on the annulus gives
$\|f\|_{L^\infty(\Omega)}\le C$, where $C$ depends only on
$s,\Theta$ and the fixed cutoff $\chi$. From the uniqueness of the energy
solution and the positivity of the Green function, we obtain
\[
 V=\mathcal G_{s,\Omega}f.
\]
Moreover, for any $x\in\Omega$,
\[
 0\le V(x)\le\|f\|_\infty(\mathcal G_{s,\Omega}1)(x).
\]
Choose a closed angular subinterval $J$ strictly inside the sector.
For $x=r\omega$ with $\omega\in J$ and $0<r<1/2$, we have
$V(x)\ge cr^{\gamma_\Theta}$ and $\delta(x)\sim r$. Hence
\[
 \int_\Omega |V/\delta^t|^pdx
 \ge c\int_0^{1/2}r^{1-p(t-\gamma_\Theta)}\,dr=\infty.
\]
Lemma~\ref{le3.3}
therefore yields $\mathcal G_{s,\Omega}1\notin W^{t,p}(\R^2)$.
This completes the proof of Theorem~\ref{thm:optimal}.
\end{proof}

\medskip

\noindent\textbf{Data Availability}\quad  Data sharing not applicable
to this article as no datasets were generated or analyzed during
the current study.

\section*{Declarations}

\noindent\textbf{Conflict of interest}\quad The authors declare
that they have no conflict of interest.

\medskip

\noindent\textbf{Declaration of AI Use}\quad During the preparation of this manuscript, the authors used OpenAI GPT-5.6 for finding the counterexample using regular Cantor sets on the boundary, English language editing, and stylistic improvements, including grammar, wording, and readability. All mathematical content was developed and verified by
the authors, who take full responsibility for the final manuscript.

\bigskip

\noindent Wenxian Ma and Sibei Yang

\medskip

\noindent School of Mathematics and Statistics, Gansu Key Laboratory of Applied Mathematics
and Complex Systems, Lanzhou University, Lanzhou 730000, People's Republic of China

\smallskip

\smallskip

\noindent {\it E-mails}: \texttt{mawenxian@lzu.edu.cn} (W. Ma)

\hspace{0.888cm} \texttt{yangsb@lzu.edu.cn} (S. Yang)

\end{document}